%
\documentclass[10pt,pdftex]{amsart}

\usepackage{lmodern}
\usepackage[bookmarks, 
colorlinks=false, backref=false,plainpages=false]{hyperref}
\usepackage[T1]{fontenc}
\usepackage[latin1]{inputenc}
\usepackage[english]{babel}
\usepackage{graphicx}
\usepackage{url}
\usepackage{graphics}
\usepackage{epsfig}
\usepackage{amsmath,amssymb,amsthm}

\setlength{\topmargin}{-.2in} 
\setlength{\textheight}{8.8in}
\setlength{\textwidth}{7.in}      %

\setlength{\oddsidemargin}{-0.3in}  %
\setlength{\evensidemargin}{-0.3in} %
\setlength{\evensidemargin}{\oddsidemargin}
\renewcommand{\theequation}{\thesection.\arabic{equation}}

\newtheorem{thm}{Theorem}[section]

\newtheorem{lem}[thm]{Lemma}
\newtheorem{prop}[thm]{Proposition}
\newtheorem{rem}[thm]{Remark}

\usepackage[ruled,linesnumbered]{algorithm2e}




\begin{document}
\newcommand{\BX}{{\bf X}}
\newcommand{\cv}{{\cal V}}
\newcommand{\cW}{{\cal W}}
\newcommand{\co}{{\cal O}}

\renewcommand{\theequation}{\thesection.\arabic{equation}}
\def\@eqnnum{{\reset@font\rm (\theequation)}}

\def\abstract{
\advance \rightskip by 10mm
\advance \leftskip by 10mm
\vspace{-0.8em}
\noindent
\small{\bf Abstract.}
}
\def\endabstract{\par\normalsize\rm}

\def\Xint#1{\mathchoice
{\XXint\displaystyle\textstyle{#1}}%
{\XXint\textstyle\scriptstyle{#1}}%
{\XXint\scriptstyle\scriptscriptstyle{#1}}%
{\XXint\scriptscriptstyle\scriptscriptstyle{#1}}%
\!\int}
\def\XXint#1#2#3{{\setbox0=\hbox{$#1{#2#3}{\int}$}
\vcenter{\hbox{$#2#3$}}\kern-.5\wd0}}
\def\ddashint{\Xint=}
\def\dashint{\Xint-}

\def\a{\alpha}
\def\b{\beta}
\def\d{\delta}\def\D{\Delta}
\def\e{\epsilon}
\def\g{\gamma}\def\G{\Gamma}
\def\k{\kappa}
\def\lam{\lambda}\def\Lam{\Lambda}
\renewcommand\o{\omega}\renewcommand\O{\Omega}
\def\s{\sigma}\def\S{\Sigma}
\renewcommand\t{\theta}\def\vt{\vartheta}
\newcommand{\vphi}{\varphi}
\def\z{\zeta}

\newcommand{\tsigma}{\tilde{\s}}
\newcommand{\tbsigma}{\tilde{\bsigma}}
\def\te{\tilde{\e}}
\def\tu{\tilde{u}}

\newcommand{\bchi}{\mbox{\boldmath$\chi$}}
\newcommand{\bdelta}{\mbox{\boldmath$\delta$}}
\newcommand{\bepsilon}{\mbox{\boldmath$\epsilon$}}
\newcommand{\bfeta}{\mbox{\boldmath$\eta$}}
\newcommand{\bgamma}{\mbox{\boldmath$\gamma$}}
\newcommand{\bomega}{\mbox{\boldmath$\omega$}}
\newcommand{\bvphi}{\mbox{\boldmath$\varphi$}}
\newcommand{\bphi}{\mbox{\boldmath$\phi$}}
\newcommand{\bPhi}{\mbox{\boldmath$\Phi$}}
\newcommand{\bpsi}{\mbox{\boldmath$\psi$}}
\newcommand{\bPsi}{\mbox{\boldmath$\Psi$}}
\newcommand{\bsigma}{\mbox{\boldmath$\sigma$}}
\newcommand{\btau}{\mbox{\boldmath$\tau$}}
\newcommand{\bxi}{\mbox{\boldmath$\xi$}}
\newcommand{\brho}{\mbox{\boldmath$\rho$}}

\newcommand{\bbeta}{\mbox{\boldmath$\beta$}}
\newcommand{\bzeta}{\mbox{\boldmath$\zeta$}}

\def\bk{\boldsymbol{\kappa}}
\def\bmu{\boldsymbol\mu}
\def\bxi{\boldsymbol{\xi}}
\def\bz{\boldsymbol{\zeta}}

\def\ba{{\bf a}}
\def\bb{{\bf b}}
\def\bc{{\bf c}}
\def\be{{\bf e}}
\def\bff{{\bf f}}
\def\bg{{\bf g}}
\def\bn{{\bf n}}
\def\bp{{\bf p}}
\def\bq{{\bf q}}
\def\bs{{\bf s}}
\def\bt{{\bf t}}
\def\bu{{\bf u}}
\def\bv{{\bf v}}
\def\bw{{\bf w}}
\def\bx{{\bf x}}
\def\by{{\bf y}}
\def\bzz{{\bf z}}

\def\bD{{\bf D}}
\def\bE{{\bf E}}
\def\bF{{\bf F}}
\def\bH{{\bf H}}
\def\bJ{{\bf J}}
\def\bV{{\bf V}}
\def\bU{{\bf U}}
\def\bW{{\bf W}}
\def\bX{{\bf X}}
\def\bY{{\bf Y}}

\def\cA{{\cal A}}
\def\cC{{\cal C}}
\def\cD{{\cal D}}
\def\cE{{\cal E}}
\def\cF{{\cal F}}
\def\cG{{\cal G}}
\def\cI{{\cal I}}
\def\cJ{{\cal J}}
\def\cK{{\cal K}}
\def\cL{{\cal L}}
\def\cO{{\cal O}}
\def\cP{{\cal P}}
\def\cQ{{\cal Q}}
\def\cR{{\cal R}}
\def\cS{{\cal \Sigma}}
\def\cT{{\cal T}}
\def\cU{{\cal U}}
\def\cV{{\cal V}}

\def\scT{{_\cT}}
\def\sD{{_D}}
\def\sE{{_E}}
\def\sF{{_F}}
\def\sFz{{_{F_z}}}
\def\sK{{_K}}
\def\sI{{_I}}
\def\sb{{_b}}
\def\sN{{_N}}

\def\curl{{{\bf curl} \ }}
\def\rot{{\mbox{rot}\ }}
\def\BPI{{\bf \Pi}}

\def\cth{\cT_h}
\def\ctH{\cT_H}

\def\tJ{\tilde{\J}}

\def\hK{\widehat{K}}
\def\hx{\widehat{x}}
\def\hy{\widehat{y}}
\def\bhv{\widehat{\bv}}

\def\l{\ell}
\def\bl{\boldsymbol{\ell}}
\def\col{\colon}
\def\f12{\frac12}
\def\dfrac{\displaystyle\frac}
\def\dint{\displaystyle\int}
\def\nab{\nabla}
\def\p{\partial}
\def\sm{\setminus}
\def\dsum{\displaystyle\sum}
\newcommand{\pp}[2]{\frac{\partial {#1}}{\partial {#2}}}
\def\bzero{{\bf 0}}

\def\divv{\nab\cdot}
\def\divx{\nab_x\cdot}
\def\divtx{\nab_{t,x}\cdot}
\def\nabx{\nab_x}

\newcommand{\grad}{\nabla}
\newcommand{\curlt}{{\nabla \times}}
\newcommand{\gperp}{\nabla^{\perp}}
\newcommand{\gradt}{\nabla\cdot}

\def\forallqq{\quad\forall\,}
\def\aph{A^{1/2}}
\def\amh{A^{-1/2}}

\def\osc{{\rm osc \, }}

\def\Im{{\rm Im}}
\newcommand{\tr}{{\rm tr}}
\def\divvr{{\rm div}}
\def\curllr{{\rm curl}}
\def\curll{{\rm curl}}
\def\curl{{\bf curl}}
\newcommand{\bgrad}{{\bf grad}}
\newcommand\diam{\mathrm{diam\,}}
\renewcommand\Im{\mathrm{Im\,}}
\def\Span{\mbox{Span}}
\def\supp{\mbox{supp\,}}
\newcommand{\trace}{{\rm trace}}

\newcommand{\tri}{|\!|\!|}
\newcommand{\ljump}{\lbrack\!\lbrack}
\newcommand{\rjump}{\rbrack\!\rbrack}
\newcommand{\bdm}{\begin{displaymath}}
\newcommand{\edm}{\end{displaymath}}
\newcommand{\beq}{\begin{equation}}
\newcommand{\eeq}{\end{equation}}
\newcommand{\beqa}{\begin{eqnarray}}
\newcommand{\eeqa}{\end{eqnarray}}
\newcommand{\beqas}{\begin{eqnarray*}}
\newcommand{\eeqas}{\end{eqnarray*}}
\newcommand{\ul}{\underline}
\newcommand{\wh}{\widehat}
\newcommand{\la}{\langle}
\newcommand{\ra}{\rangle}

\newcommand{\Lt}{L^2(\Omega)}
\newcommand{\Lts}{L^2(\Omega)^2}
\newcommand{\Ltc}{L^2(\Omega)^3}
\newcommand{\Ho}{H^1(\Omega)}
\newcommand{\Hoh}{H^1(\wh{\Omega})}
\newcommand{\Hoi}{H^1(\Omega_i)}
\newcommand{\Hos}{H^1(\Omega)^2}
\newcommand{\Hoc}{H^1(\Omega)^3}
\newcommand{\Hoch}{H^1(\wh{\Omega})^3}
\newcommand{\Hoci}{H^1(\Omega_i)^3}
\newcommand{\Hoz}{H^1_0(\Omega)}
\newcommand{\Ht}{H^2(\Omega)}
\newcommand{\Hti}{H^2(\Omega_i)}
\newcommand{\Hts}{H^2(\Omega)^2}
\newcommand{\Htc}{H^2(\Omega)^3}
\newcommand{\Htz}{H^0(\Omega)}
\newcommand{\Hh}{H^{1/2}(\Gamma)}
\newcommand{\Hhi}{H^{1/2}(\Gamma_i)}
\newcommand{\Hmh}{H^{-1/2}(\Gamma)}
\newcommand{\Hdiv}{H(\divvr;\,\Omega)}
\newcommand{\Hdivh}{H(\divv;\,\wh \Omega)}
\newcommand{\hcurl}{H(\curl\,A;\,\Omega)}
\newcommand{\Hcurl}{H(\curll\,A;\,\Omega)}
\newcommand{\Hcrl}{H(\curll\,;\,\Omega)}
\newcommand{\hcrl}{H(\curl\,;\,\Omega)}
\newcommand{\Hcrlh}{H(\curll\,;\,\wh\Omega)}
\newcommand{\hcrlh}{H(\curl\,;\,\wh\Omega)}
\newcommand{\Wdiv}{\BW_0(\mbox{\divv}\,;\,\Omega)}
\newcommand{\Wcurl}{\BW_0(\mbox{\curl}\,A;\,\Omega)}
\newcommand{\WcrossV}{\BW \times V}

\def\grad{{\nabla}}

\def\calS{{\cal S}}
\def\calT{{\cal T}}
\def\cA{{\mathcal A}}
\def\cB{{\cal B}}
\def\cD{{\mathcal{D}}}

\def\cH{{\cal H}}
\def\cU{{\mathcal U}}
\def\ba{{\mathbf{a}}}

\def\beps{{\mathbf{\epsilon}}}

\def\cM{{\mathcal{M}}}
\def\cN{{\mathcal{N}}}
\def\cT{{\mathcal{T}}}
\def\cE{{\mathcal{E}}}
\def\cP{{\mathcal{P}}}
\def\cF{{\mathcal{F}}}

\def\cB{{\mathcal{B}}}
\def\cG{{\mathcal{G}}}

\def\cL{{\mathcal{L}}}
\def\cJ{{\mathcal{J}}}

\def\cR{{\mathcal{R}}}

\def\cV{{\mathcal{V}}}
\def\cW{{\mathcal{W}}}
\def\cZ{{\mathcal{Z}}}

\def\bbT{{\mathbb{T}}}
\def\bbV{{\mathbb{V}}}
\def\bbW{{\mathbb{W}}}
\def\bbX{{\mathbb{X}}}

\newcommand{\lJump}{[\![}
\newcommand{\rJump}{]\!]}
\newcommand{\jump}[1]{[\![ #1]\!]}

\newcommand{\sd}{\bsigma^{\Delta}}
\newcommand{\rd}{\brho^{\Delta}}

\newcommand{\eps}{\epsilon}
\newcommand{\R}{\rm I\kern-.19emR}

\newcommand{\LS}{{\mathtt{LS}}}
\newcommand{\NLS}{{\mathtt{NLS}}}
\newcommand{\NJ}{{\mathtt{NJ}}}
\newcommand{\JJ}{{\mathtt{J}}}

\newcommand{\blt}{{\bullet}}

\newcommand{\norm}[1]{\left\lVert#1\right\rVert}

\newcommand{\refine}{\mbox{refine}}

\title [Non-intrusive Least-Squares Functional A Posteriori Error Estimator]{
Non-intrusive Least-Squares Functional A Posteriori Error Estimator:  Linear and Nonlinear Problems with Plain Convergence}
\author[Z. Li and S. Zhang]{Ziyan Li and Shun Zhang}
\address{Department of Mathematics, City University of Hong Kong, Kowloon Tong, Hong Kong, China}
\email{ziyali3-c@my.cityu.edu.hk, shun.zhang@cityu.edu.hk}
\thanks{This work was supported in part by
Research Grants Council of the Hong Kong SAR, China, under the GRF Grant Project No. CityU 11316222}
\date{\today}

\keywords{}

\maketitle
\begin{abstract}
The a posteriori error estimator using the least-squares functional can be used for adaptive mesh refinement and error control even if the numerical approximations are not obtained from the corresponding least-squares method. This suggests the development of a versatile non-intrusive a posteriori error estimator. 
In this paper, we present a systematic approach for applying the least-squares functional error estimator to linear and nonlinear problems that are not solved by the least-squares finite element methods. 
For the case of an elliptic PDE solved by the standard conforming finite element method, we minimize the least-squares functional with conforming approximation inserted to recover the other physical meaningful variable. By combining the numerical approximation from the original method with the auxiliary recovery approximation, we construct the least-squares functional a posteriori error estimator. Furthermore, we introduce a new interpretation that views the non-intrusive least-squares functional error estimator as an estimator for the combined solve-recover process. This simplifies the reliability and efficiency analysis. We extend the idea to a model nonlinear problem. Plain convergence results are proved for adaptive algorithms of the general second order elliptic equation and a model nonlinear problem with the non-intrusive least-squares functional a posteriori error estimators.

\end{abstract}


\section{Introduction}\label{intro}
\setcounter{equation}{0}
The least-squares finite element method (LSFEM) \cite{fosls,CLMM:94,CMM:97,BG:98,Jiang:98,BG:09,CFZ:15,Zhang:23} designs numerical methods based on minimizations of least-squares energy functionals with first-order system reformulations. Compared to the standard variational formulation and the related finite element method, one of the main advantages of the LSFEM is that the least-squares functional is a good a posteriori error indicator/estimator for both the mesh refinement and the error control. Earlier examples of adaptive LSFEMs can be found in \cite{JC:87,BMM:97}. For a series of problems, the built-in least-squares functional error estimators have been studied, for example, \cite{DMMO:04,Sta:05,CW:09,MS:11,LZ:18,LZ:19,QZ:20,Fuh:20,Bringmann:23}. 

To illustrate the idea, consider the following first-order system,  which may be linear or nonlinear, 
$$
B(\phi,\psi)^t = F
$$ in the sense of $L^2$. We assume that homogeneous boundary conditions and assume that $\phi \in W$ and $\psi\in V$ with $W$ and $V$ being the $L^2$-based Hilbert spaces. Define the least-squares energy functional as:
$
E(\alpha,\beta;F) = \norm{B (\alpha,\beta)^t - F}_0^2.
$
Then the least-squares minimization problem is:
$$
(\phi, \psi) = \mbox{arg min}_{\alpha\in W, \beta \in V} E(\alpha,\beta;F).$$
Let $W_h\subset W$ and $V_h\subset V$ be two corresponding finite element spaces, then the the least-squares finite element minimization problem is: 
$
(\phi_h, \psi_h) = \mbox{arg min}_{\alpha\in W_h, \beta \in V_h} E(\alpha,\beta;F)$.

There are two central ideas in the construction of a posteriori error estimators. The first one is solving the problem twice with different approaches and the second one is the evaluation or estimation of the residual in specific norms. In the adaptive LSFEM framework, we use the least-squares functional error estimator given by:
$$
\eta(\phi_h,\psi_h) : = \norm{B (\phi_h,\psi_h)^t - F}_0 = \sqrt{E(\phi_h,\psi_h;F)}
$$
This error estimator naturally combines two concepts: the first-order system LSFEM, which solves the equations in physically meaningful variables (e.g., $\phi$ and $\psi$), and the least-squares functional, which evaluates the residual $\mathbf{B} (\phi_h,\psi_h)^t - F$ in $L^2$-norm.

For the well-studied problems, we often have the so-called norm-equivalence, where the least-squares functional estimator is equivalent to the error measured in some standard norms.  Even in cases where the norm-equivalence is not available, the least-squares functional estimator can still be used since the energy functional induces an artificial least-squares energy norm, see \cite{LZ:18,LZ:19,QZ:20} for applications in transport equations and elliptic equations in non-divergence form. 
The adaptive least-squares method, as a "brute-force" approach, has advantages for its directness and simplicity. Recent examples of using least-squares methods as a "brute-force" method for solving PDEs numerically includs the physics-informed neural networks \cite{PINN}.  

One distinctive feature of the least-squares functional estimator, which sets it apart from many other a posteriori error estimators, is that its reliability and efficiency bounds do not require the discrete approximations to be exact solutions of the underlying LSFEM problem. We can use the estimator
$
\eta(\alpha_h,\beta_h) : = \|B (\alpha_h,\beta_h)^t - F\|_0
$
with $\alpha_h \in W_h$ and $\beta_h\in V_h$ as  inexact approximations of $(\phi,\psi)$. In contrast, most efficiency bounds of other a posteriori error estimators rely on Verfurth's bubble function trick \cite{Ver:13}, which is based on the error equation. Consequently, these estimators typically require the numerical approximation to be the solution of the corresponding finite element discrete problem. Thus, designing and analyzing a posteriori error estimators for specific problems often necessitate individual treatments.

While the LSFEM offers several advantages as a "brute-force" and adaptive method, it is important to acknowledge that many real-world problems already have well-established numerical methods that are often not based on least-squares techniques, and may not even use finite elements or variational formulations. These methods may incorporate specific tricks tailored to the problem at hand. In this paper, we aim to address the following questions: Can we use the least-squares functional a posteriori error estimator even when the underlying numerical method is not based on least-squares principles? In other words, can we use the least-squares functional error estimator in a non-intrusive manner? Furthermore, can we ensure that the adaptive algorithm driven by the proposed non-intrusive least-squares functional a posteriori error estimator converges? 
These questions are crucial as they explore the applicability and effectiveness of the least-squares functional estimator beyond its original context. By investigating these aspects, we aim to provide insights into the potential use of the least-squares functional estimator as a versatile tool that can complement and enhance existing numerical methods for various problem types. 

In this paper, we aim to achieve the following objectives: 
(1) Develop a non-intrusive least-squares functional a posteriori error estimator for problems that are solved by standard finite element methods.  (2) Establish a framework for a priori and a posteriori error analysis by considering the solve-recover process as a two-step combined problem. (3) Prove the plain convergence of the adaptive algorithm driven by the non-intrusive least-squares functional a posteriori error estimator. 
To illustrate the effectiveness and generality of our proposed method, we apply it to two model problems: the general indefinite and non-symmetric second-order elliptic equation and a monotone nonlinear problem.

The idea of non-intrusive least-squares functional a posteriori error estimators for a problem which is not solved by a LSFEM is originally proposed in \cite{CZ:10b} for elliptic equations. In that work, for an elliptic equation solved by the conforming finite element method, the flux is recovered by minimizing the least-squares functional with the conforming finite element solution inserted. The least-squares functional a posteriori error estimator can then be applied using the conforming finite element approximation and the recovered flux approximation.
This concept can be generalized to different problems in order to develop non-intrusive least-squares functional estimators. The approach involves first solving the PDE using a preferred numerical method. For example, we may solve for an approximation $\phi_h\in W_h$ using the standard Galerkin method. If certain necessary auxiliary variables are missing for the application of the least-squares functional  estimator, we can recover them in appropriate discrete spaces by minimizing the functional with the existing finite element solution inserted. For instance, if $\psi_h$ is missing, we can recover $\psi_h$ by solving the minimization problem:
\beq \label{LS_recover}
\psi_h = \mbox{arg} \min_{\beta \in V_h} E(\phi_h,\beta;F).
\eeq
By obtaining the recovered variable $\psi_h$, we can then apply the least-squares functional a posteriori error estimator $\eta(\phi_h,\psi_h) = \|B (\phi_h,\psi_h)^t - F\|_0$ to estimate the error.

For this non-intrusive least-squares functional a posteriori error estimator, we need to discuss its reliability and efficiency. Typically, when constructing an a posteriori error estimator for a numerical method, it needs to be proven that the error measured in a certain norm is bounded both above and below by the error estimator, up to certain constants and high-order perturbations. However, establishing such bounds, particularly the efficiency bound, for the non-intrusive least-squares functional a posteriori error estimator is challenging.
Taking the similar flux-recovery estimator as an example, the efficiency bound is proved by constructing an explicit recovery and finding its relation with the known residual-type error estimator. However, this approach poses additional difficulties for the non-intrusive least-squares functional estimator, as its purpose is to be applied to less-studied problems where a residual-based error estimator may not be available. In this paper, we introduce a novel interpretation of the non-intrusive least-squares functional a posteriori error estimator. We consider the solve and least-squares recovery processes as a combined two-step problem. In the solve step, we obtain the numerical approximation of the original unknown quantity ($\phi_h$), while in the least-squares recovery step, we determine the numerical approximation of the auxiliary unknown quantity ($\psi_h$). The least-squares functional a posteriori error estimator can then be understood as an estimator for both approximations $(\phi_h,\psi_h)$. This interpretation simplifies the mathematical analysis required to establish the reliability and efficiency of the non-intrusive least-squares functional a posteriori error estimator. As long as the approximations belong to the underlying function spaces, the least-squares functional a posteriori error estimator $\eta(\phi_h,\psi_h)$ is reliable and efficient for the combined approximations. Moreover, this viewpoint opens up opportunities for further applications of this idea to less-explored problems where a residual-based error estimator may not be available. The notion that the least-squares functional a posteriori error estimator can be seen as an estimator for the solve-recover combined problem is implicitly utilized in the original paper \cite{CZ:10b}.

The concept of using the least-squares functional  estimator alone, without solving the underlying problem using LSFEM, is particularly appealing for nonlinear problems. In the case of a nonlinear problem, the least-squares minimization problem becomes non-convex, even if the original problem is convex. Consequently, the standard LSFEM can encounter issues related to the non-uniqueness of the discrete minimizer. Additionally, the least-squares approach amplifies the nonlinearity of the problem.
However, by employing a well-established numerical method to solve the primal variable and only recovering the auxiliary variable using partial least-squares \eqref{LS_recover} (which is often a linear problem), we can use the estimator without solving a nonlinear least-squares problem. In this approach, both the solve step and the recovery step are well-studied and supported by existing techniques. Consequently, we can use the simple least-squares functional estimator. The additional computational cost introduced by the recovery step is acceptable since it involves solving only a one-shot linear problem. Even in the case of a linear problem, the cost of solving the global recovery problem remains reasonable since it is comparable to the computational cost of solving the problem using a standard LSFEM approach.


Based on this combined two-step  problem framework of a priori and a posteriori analysis, we can discuss the plain convergence of the adaptive algorithm driven by the non-intrusive least-squares functional estimator. Similar to the recent work by F\"uhrer and Praetorius \cite{FP:20}, we establish that the combined two-step problem satisfies a set of conditions within the abstract framework proposed by Siebert \cite{Sie:11}. 
Convergence theories for standard adaptive finite element methods driven by residual-based error estimators with optimal rates have been established in \cite{Dof:96, MNS:02, BDD:04, CFPP:14}. The plain convergence of adaptive finite element methods has been studied in \cite{MSV:08, Sie:11}. However, since the least-squares functional a posteriori error estimator does not contain mesh-size factors in its terms, the standard arguments utilized in \cite{Dof:96, MNS:02, BDD:04, CFPP:14} to prove convergence with rates for adaptive finite element methods cannot be directly applied.
Alternative explicit residual-based error estimators specifically designed for LSFEMs have been developed in \cite{CP:15, BC:17, Bri:23}, and optimal convergence rates have been established for these estimators. However, in this paper, we do not pursue this approach as our main objective is to use the original least-squares functional estimator. Consequently, we focus solely on the plain convergence analysis. The plain convergence of standard adaptive LSFEMs can be found in \cite{CPB:17, FP:20, GS:21}.


In the remaining sections of the paper, our focus is primarily on two model problems: the general indefinite and non-symmetric second-order elliptic equation and a monotone nonlinear problem. For both of these equations, which are solved using standard conforming finite element methods, we derive the non-intrusive least-squares functional estimators. The a priori and a posteriori error analysis are developed and the plain convergence is proved in details.
The plain convergence analysis also confirms that even the least-squares functional error estimator is naturally reliable and efficient,  we still require a priori convergence in the corresponding least-squares functional equivalent norms to ensure the convergence of the adaptive algorithms. 


The concept of recovering an additional physically meaningful variable and utilizing it to construct a posteriori error estimators is not a new idea. One notable example is the ZZ estimator \cite{ZZ1:92, ZZ2:92}. The least-squares functional error estimator, based on auxiliary recovery, can also be seen as a natural extension of the duality-gap error estimator, which is built on primal-dual variational principles \cite{BS:08, CZ:12, EV:15, Zhang:20}. 
For a certain class of problems that satisfy a natural minimization principle and possess a dual problem associated with a natural maximizing principle, the duality gap between the primal and dual approximations serves as a reliable estimator for the error. This is discussed in detail in \cite{Zhang:20}. However, for problems that lack a natural energy minimization principle, we must seek an artificial energy functional, such as the least-squares energy functional, and recover the dual or auxiliary variable by minimizing this artificial energy functional. This approach leads to the development of the non-intrusive least-squares functional error estimator.


The paper is organize as follows: In Section 2,Sobolev spaces, finite element meshes, refinements, and corresponding finite element spaces are discussed. The general second-order elliptic equation and its conforming finite element approximation are presented in Section 3. In Section 4, we review the LSFEM and its built-in least-squares functional error estimator. In Section 5, we define the non-intrusive least-Squares functional error estimator for conforming FEM of elliptic equation. The alternative view on the non-intrusive least-squares functional estimator is discussed on Section 6 and the plain convergence is proved for the adaptive algorithm driven by the non-intrusive least-squares functional estimator in Section 7. In Section 8, we discuss a simple monotone nonlinear problem and its conforming finite element a priori error estimate. Non-intrusive least-squares functional error estimator for the model monotone problem is developed in Section 9 and plain convergence is proved in Section 10. Some final comments and remarks are given in Section 11.

\section{Sobolev spaces, meshes, refinement, and finite element spaces}
\subsection{Sobolev spaces}
For a domain $\omega$, we use the notation  $\|v\|_{0,p,\omega}$ for the $L^p$ norm of a function  $v\in L^p(\omega)$. When $p=2$,  the simpler notation  $\|v\|_{0,\omega}$ for the $L^2$ norm of a function  $v\in L^2(\omega)$ is used. The $H^1$-norm for a function $v\in H^1(\omega)$ is $\|v\|_{1,\omega}$, and the norm of a vector function $\btau \in H(\divvr;\omega)$ is 
$
\|\btau\|_{H(\divvr;\omega)} : =\left( \|\btau\|_{0,\omega}^2 + \|\gradt\btau\|_{0,\omega}^2\right)^{1/2}.
$ 
The space $H^1_0(\omega)$ is the subspace of $H^1(\omega)$ with zero boundary condition. Let 
$$ \bbX(\omega) = H(\divvr;\omega)\times  H^1_0(\omega),$$ and define its norm as
\begin{equation*}
         \tri (\btau,v) \tri_{\omega} : = \left( \|v\|_{1,\omega}^2 + \|\btau\|_{H(\divvr;\omega)} ^2\right)^{1/2}.
\end{equation*}
The domain $\O$ is a bounded, open, connected subset of $\mathbb{R}^d$ ($d = 2$ or $3$) with a Lipschitz continuous boundary $\p\O$. When $\omega= \Omega$, we omit the subscript $\O$ for simplicity. The following proposition is obvious, which is the Assumptions (A3) and (A4) of \cite{FP:20} and (2.3) of \cite{Sie:11}.
\begin{prop}\label{A3A4}
The norm on $\bbX$ is additive and absolutely continuous with respect to the Lebesgue measure.
\end{prop}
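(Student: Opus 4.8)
The plan is to reduce both claims to elementary properties of the Lebesgue integral by exhibiting the squared triple-norm as the integral of a single fixed nonnegative $L^1$ density. First I would expand the definition: for $(\btau,v)\in\bbX$ and any measurable $\omega\subseteq\Omega$,
\begin{equation*}
\tri(\btau,v)\tri_\omega^2 = \norm{v}_{0,\omega}^2 + \norm{\grad v}_{0,\omega}^2 + \norm{\btau}_{0,\omega}^2 + \norm{\gradt\btau}_{0,\omega}^2 = \int_\omega g\,dx,
\end{equation*}
where $g := v^2 + |\grad v|^2 + |\btau|^2 + |\gradt\btau|^2$. Because $(\btau,v)\in H(\divvr;\Omega)\times H^1_0(\Omega)$, each of $v$, $\grad v$, $\btau$, and $\gradt\btau$ lies in $L^2(\Omega)$, so $g\in L^1(\Omega)$ and $g\ge 0$. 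Thus the set function $\omega\mapsto\tri(\btau,v)\tri_\omega^2$ coincides with the finite measure $\mu(\omega):=\int_\omega g\,dx$.

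Additivity is then immediate: for any partition of $\omega$ into pairwise disjoint measurable pieces $\{\omega_i\}$ — in the intended application, the elements of a mesh covering $\omega$ — countable additivity of the Lebesgue integral gives $\tri(\btau,v)\tri_\omega^2 = \sum_i \tri(\btau,v)\tri_{\omega_i}^2$, which is exactly the additivity required by (A3) of \cite{FP:20} and (2.3) of \cite{Sie:11}. For absolute continuity I would invoke the absolute continuity of the integral for $L^1$ functions: since $g\in L^1(\Omega)$, for every $\eps>0$ there is a $\delta>0$ such that $|\omega|<\delta$ implies $\mu(\omega)=\int_\omega g\,dx<\eps$; hence $\mu$, and therefore $\omega\mapsto\tri(\btau,v)\tri_\omega^2$, is absolutely continuous with respect to the Lebesgue measure, which is the content of (A4).

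There is essentially no analytic obstacle here — the statement is ``obvious'' precisely because the norm is assembled from $L^2$ integrals, so its square is additive and absolutely continuous by construction. The only point requiring care is bookkeeping: one must confirm that ``additive'' in the cited abstract frameworks refers to the squared norm viewed as a set function, not to $\tri\cdot\tri$ itself (which is only subadditive), and that the density $g$ is a fixed function attached to the argument $(\btau,v)$, so that the absolute-continuity modulus $\delta$ is allowed to depend on $(\btau,v)$. Matching the integral representation above to the precise wording of Assumptions (A3)--(A4) of \cite{FP:20} and (2.3) of \cite{Sie:11} then completes the verification.
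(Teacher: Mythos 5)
Your proof is correct and is precisely the spelled-out version of why the paper declares this proposition ``obvious'' without proof: the squared norm $\tri(\btau,v)\tri_\omega^2$ is the integral over $\omega$ of the fixed $L^1$ density $v^2+|\grad v|^2+|\btau|^2+|\gradt\btau|^2$, so additivity and absolute continuity are inherited directly from the Lebesgue integral. Your bookkeeping remark---that additivity concerns the squared norm as a set function and that the modulus of absolute continuity may depend on $(\btau,v)$---is exactly the right reading of the cited assumptions in the references.
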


\subsection{Meshes and mesh-refinement}
We assume that $\cT_\bullet$ is a conforming simplicial triangulation of the domain $\O$. Define the mesh-size function $h_\bullet \in L^{\infty}(\O)$ for the mesh $\cT_\bullet$ to be $h_\bullet|_T = h_T = |T|^{1/d}$, for any $T\in\cT_\bullet$.

Let $\cM_\bullet\subset \cT_\bullet$ to be a set of marked elements. Let $\mbox{refine}(\cdot)$ be the standard newest vertex bisection refinement routine.  Refine at least all marked elements in $\cM_\bullet$, we get a new mesh $\cT_\oplus $ from $\cT_\bullet$. This process is denoted by $\cT_\oplus    = \refine(\cT_\bullet,\cM_\bullet)$. The notation $\bbT(\cT_\bullet)$ represents the collection of all meshes that can be obtained through an arbitrary but finite number of refinements of $\cT_\bullet$. Let $\cT_0$ be the initial mesh and $\bbT:=\bbT(\cT_0)$.

We make the following assumptions (which essentially coincide with (2.4) of \cite{Sie:11} and {\bf R1, R2,} and {\bf R3} of \cite{FP:20}):
\begin{description}
\item[Assumption R1]  {\bf Reduction on refined elements:} On refined elements, the mesh-size function is monotone and contractive, with a constant $0 < q_{\mbox{ref}} < 1$, i.e.,
$$
h_\oplus \leq h_\bullet \mbox{  a.e. in }\O \mbox{ and } 
h_\oplus \leq q_{\mbox{ref}} h_\bullet \quad \forall T \in  \cT_\bullet\backslash \cT_\oplus.
$$

\item[Assumption R2] {\bf Uniform shape regularity:} There exists a positive constant $\kappa$, which depends only on the initial mesh $\cT_0$, such that
$$
\mbox{diam}(T)^d \leq \kappa |T| \quad \forall T\in \cT_\bullet, \cT_\bullet \in \bbT.
$$

\item[Assumption R3] {\bf Marked elements are refined:} The following is true:
$$
\cM_\bullet \cap \refine(\cT_\bullet, \cM_\bullet) =\emptyset \quad \forall \cT_\bullet \in \bbT \mbox{  and  } \cM_\bullet \subset \cT_\bullet.
$$
\end{description}

\subsection{Finite element spaces}

For each mesh $\cT_\bullet\in\bbT$, we use the $C^0$-conforming finite element space to approximate $H^1$-functions. Let $P_n(T)$ be the space of polynomials of degree $n$ on an element $T\in \cT_\bullet$. Denote the linear $C^0$-conforming finite element space associated with the triangulation $\cT_\bullet$ by
\beq \label{C0FE}
\bbV_\blt :=\{v\in H^1_0(\O):v|_T \in P_1(T)\; \forall T\in\cT_\bullet\} \subset H^1_0(\O).
\eeq
Denote the local lowest-order Raviart-Thomas (RT) \cite{RT:77} on an element $T\in\cT_\bullet$ by $RT_{0}(T)=P_{0}(T)^d +\bx\,P_{0}(T)$. The $\Hdiv$ conforming $RT_0$ space is defined by
\beq \label{RTFE}
\bbW_\blt := \{\btau\in H(\divvr;\Omega):  \btau|_T\in RT_{0}(T)\;\;\forall\,\,T\in\cT_{\bullet}\}.
\eeq
We use the notation 
$$
\bbX_\blt :=  \bbW_\blt \times \bbV_\blt.
$$
We have the following property of the discrete space $\bbX_\blt$, which coincide with Assumptions (S1) and (S2) of \cite{FP:20} and  (3.5) of \cite{Sie:11}.

\begin{prop} \label{S1S2}
The space $\bbX_\blt$ (also $\bbW_\blt$ and $\bbV_\blt$) are conforming and finite dimensional for all $\cT_\blt\in \bbT$. The mesh-refinement ensures $\bbX_\blt$  is nested, that is $\bbX_\blt \subset \bbX_\oplus$ for all $\cT_\oplus \in \bbT(\cT_\blt)$.
\end{prop}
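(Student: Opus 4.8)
The plan is to verify the three assertions in turn: conformity and finite dimensionality follow immediately from the definitions, while the nestedness rests on the mesh-nestedness produced by newest-vertex-bisection refinement together with the invariance of the local polynomial spaces under restriction to sub-simplices.

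First, conformity and finite dimensionality. By the very definition \eqref{C0FE}, every $v\in\bbV_\blt$ lies in $H^1_0(\Omega)$, and by \eqref{RTFE} every $\btau\in\bbW_\blt$ lies in $H(\divvr;\Omega)$; hence $\bbX_\blt=\bbW_\blt\times\bbV_\blt\subset H(\divvr;\Omega)\times H^1_0(\Omega)=\bbX$, which is the claimed conformity. Since $\cT_\blt$ triangulates the bounded domain $\Omega$ it has finitely many elements, and on each $T$ the local spaces $P_1(T)$ and $RT_0(T)=P_0(T)^d+\bx\,P_0(T)$ have dimension $d+1$; as $\bbV_\blt$ and $\bbW_\blt$ are subspaces of the finite products of these local spaces, they, and therefore $\bbX_\blt$, are finite dimensional.

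The substantive part is the nestedness. I would first record the key geometric property of the refinement routine: if $\cT_\oplus\in\bbT(\cT_\blt)$, then $\cT_\oplus$ is a genuine refinement of $\cT_\blt$, meaning that every $T\in\cT_\blt$ is the union of those elements $T'\in\cT_\oplus$ with $T'\subset T$. This is the standard output of newest-vertex bisection, and since $\bbT(\cT_\blt)$ consists of meshes reached by finitely many refinement steps, it follows for a general descendant mesh by composing single steps. Granting this, the two component inclusions are checked locally. If $v\in\bbV_\blt$ and $T'\in\cT_\oplus$ with $T'\subset T\in\cT_\blt$, then $v|_{T'}$ is the restriction of the affine function $v|_T$, hence $v|_{T'}\in P_1(T')$; global continuity and the zero boundary trace are retained, so $v\in\bbV_\oplus$ and $\bbV_\blt\subset\bbV_\oplus$. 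Likewise, for $\btau\in\bbW_\blt$ the restriction $\btau|_T=\ba+b\,\bx$ on a coarse element restricts on any sub-simplex $T'\subset T$ to a function of the same form, so $\btau|_{T'}\in RT_0(T')$.

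The one point that deserves care — and the closest thing to an obstacle — is verifying that the refined field retains its $H(\divvr;\Omega)$-conformity, i.e.\ that no spurious normal jumps are introduced across the faces newly created inside a coarse element. This is resolved by observing that within each $T\in\cT_\blt$ the field equals the single polynomial $\ba+b\,\bx$, which is smooth across any interior face of $T$, so its normal component is continuous across every new face; meanwhile continuity of the normal trace across the faces inherited from $\cT_\blt$ is unchanged. Hence $\btau\in\bbW_\oplus$ and $\bbW_\blt\subset\bbW_\oplus$. Combining the two component inclusions yields $\bbX_\blt=\bbW_\blt\times\bbV_\blt\subset\bbW_\oplus\times\bbV_\oplus=\bbX_\oplus$, completing the argument.
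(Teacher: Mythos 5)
Your proof is correct; the paper offers no proof at all---it states the proposition as an immediate consequence of the definitions and of the nestedness of newest-vertex-bisection refinements (merely noting that it coincides with Assumptions (S1)--(S2) of F\"uhrer--Praetorius and (3.5) of Siebert)---and your argument is precisely the standard one being taken for granted. The only inessential point is your final ``obstacle'': since $\btau$ is unchanged as a function, $\btau\in H(\divvr;\Omega)$ is part of the hypothesis $\btau\in\bbW_\blt$ and needs no re-verification on the finer mesh; only the element-wise $RT_0$ structure must be checked, which your restriction argument already does.
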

We discuss the approximation properties of $\bbW_\blt$ and $\bbV_\blt$. To get a priori error estimate, we will make low regularity assumptions on the regularity of the solution, while we only need approximation properties for $H^2$-functions if we just want to prove the plain convergence of adaptive algorithms. 

By Sobolev's embedding theorem, $H^{1+s}(\O)$, with $s>0$ for two dimensions and $s > 1/2$ for three dimensions, is embedded in $C^0(\O)$. Thus, we can define the nodal interpolation $I^{nodal}_\blt$ of a function $v\in H^{1+s}(\O)$ with $I^{nodal}_\blt v \in \bbV_\blt$ and $I^{nodal}_\blt v(z) = v(z)$ for a vertex $z$. 
We have the following local interpolation estimate for the linear nodal interpolation $I^{nodal}_\blt$ with local regularity $0<s_T\leq 1$ in two dimensions and $1/2<s_T\leq 1$ in there dimensions \cite{DuSc:80,CHZ:17}:
\beq \label{nodal_inter}
\|v- I^{nodal}_\blt v\|_{1,T} \leq C h_T^{s_T}\|v\|_{1+s_T,T} \quad \forall T\in \cT_\blt.
\eeq
For solutions with low regularities, the nodal interpolation is not well-defined. We can use the modified Cl\'{e}ment interpolation \cite{Clement:75,BeGi:98} or the Scott-Zhang interpolation \cite{SZ:90}. For an element $T\in\cT_\blt$, let $\Delta_T$ be the collection of elements in $\cT_\blt$ that share at least one vertex with $T$. Assume that $v\in H^1_0(\O)$ and $v|_{\Delta_T}\in H^{1+s_{\Delta_T}}(\Delta_T)$ for some $0 <  s_{\Delta_T}\leq k+1$, and let $I_{sz} v$ be the Scott-Zhang interpolation into $S_{k+1,0}$, we have 
\beq\label{sz}
\|\nabla(v-I_{sz} v)\|_{0,T} \leq C h_T^{s_{\Delta_T}}|v|_{1+s_{\Delta_T},\Delta_T}.
\eeq
Define $\cT_{\blt,s}$ to be the part of the mesh such that the local element-wise regularity $s_T$ of $H^{1+s_T}(T)$ is  big enough to ensure the nodal interpolation:
$$
\cT_{\blt,s} :=\{T\in \cT_\blt: s_T>0 \mbox{ for } d=2 \mbox{ and } s_T>1/2  \mbox{ for }  d=3 \}.
$$
Assume that $\btau\in L^r(\O)^d\cap H(\divvr;\O)$ for some $r>2$, and locally $\btau \in H^{s_T}(T)$ with the local regularity  $1/2<s_T\leq 1$. Let $I^{rt}_h$ be the canonical RT interpolation from $L^r(\O)^d\cap H_N(\divvr;\O)$ to $RT_{0,N}$. Then the following local interpolation estimates hold for local regularity $1/2<s_T\leq 1$ with the constant $C_{rt}$ being unbounded as $s_T\downarrow 1/2$ (see Chapter 16 of \cite{FE1}): 
\beq \label{RT_inter1}
\|\btau- I^{rt}_\blt\btau\|_{0,T} \leq C_{rt} h_T^{s_T}\|\btau\|_{s_T,T} \quad \forall T\in \cT_\blt.
\eeq
Due to the commutative property of the standard RT interpolation, if we further assume that $\gradt \btau|_T \in H^{t_T}(T)$, $0<t_T\leq 1$, then
\beq \label{RT_inter2}
\|\gradt (\btau- I^{rt}_\blt\btau)\|_{0,T} \leq C h_T^{t_T}|\gradt\btau|_{t_T,T} \quad \forall T\in \cT_\blt.
\eeq
To prove the plain convergence, we only need the following approximation property in the dense subspace of $\bbX$. Note that $H^2(\O)\cap H_0^1(\O)$ is dense in $H^1_0(\O)$ and $H^2(\O)^d$ is dense in $H(\divvr;\O)$. Define the interpolation operator 
$$
I_\blt: H^2(\O)^d\times (H^2(\O)\cap H_0^1(\O)) \rightarrow \bbX_\blt \mbox{ defined by }I_\blt(\btau,v):=(I^{rt}_\blt \btau, I^{nodal}_\blt v).
$$
For $\btau \in H^2(T)$, we then have $\|\btau- I^{rt}_\blt\btau\|_{0,T} \leq C h_T\|\btau\|_{1,T} \leq C h_T\|\btau\|_{2,T}$ and $\|\gradt (\btau- I^{rt}_\blt\btau)\|_{0,T} \leq C h_T\|\gradt\btau\|_{1,T} \leq C h_T\|\btau\|_{2,T}$. Thus,  the interpolation operator $I_\blt$ satisfies
\beq \label{localapp}
\tri (\btau,v) - I_\blt(\btau,v) \tri_{T} \leq C h_T (\|\btau\|_{H^2(T)}+\|v\|_{H^2(T)}) \quad \forall 
(\btau,v) \in H^2(\O)^d\times (H^2(\O)\cap H_0^1(\O)) \mbox{ and } \forall T\in\cT_\blt.
\eeq
Thus, the approximation operator $I_\blt$ satisfies the assumption (S3) ({local approximation property}) of \cite{FP:20} and (2.5c) of \cite{Sie:11} with the square of the $H^2$-norm  is obviously addictive.

\section{General second-order elliptic problem and its conforming FEM}
Consider the second order elliptic problem:
\beq \label{pde1_m1}
-\gradt(A \nabla u) +\bb\cdot \nabla  u + c u =f_1 - \gradt \bff_2  \mbox{ in } \O, \quad
u = 0 \mbox{ on } \p\O.
\eeq
We assume the following very mild conditions on the coefficients. 
The diffusion coefficient matrix $A \in L^{\infty}(\O)^{d\times d}$ is a given $d\times d$ tensor-valued function;  the matrix $A$ is uniformly symmetric positive definite: there exist positive constants $0 < \Lambda_0 \leq \Lambda_1$ such that
$
\Lambda_0 \by^T\by \leq \by^T A \by \leq \Lambda_1 \by^T\by
$
for all $\by\in \mathbb{R}^d$ and almost all $x\in \O$. 
The coefficients $\bb \in L^{\infty}(\O)^d$ and  $c\in L^{\infty}(\O)$ are given vector- and scalar-valued bounded functions, respectively.

We assume that the right-hand side is in $H^{-1}(\O):= (H_0^1(\O))'$. As discussed in \cite{Evans:10,Brezis:11}, any functional in $H^{-1}(\O)$ can be written as $f_1 - \gradt \bff_2$ for $f_1\in L^2(\O)$ and $\bff_2\in L^2(\O)^d$. 
Here  $f_1\in L^2(\O)$ and $\bff_2 \in L^2(\O)^d$ are given functions.  We have $f_1 -\gradt \bff_2 \in H^{-1}(\O)$. The divergence of $\bff_2$ should be understood in the distributional sense, i.e., for a $\bff_2\in L^2(\O)^d$, its divergence $\gradt \bff_2 \in H^{-1}(\O)$ is defined as follows:
$$
(\gradt \bff_2, v) := -(\bff_2, \nabla v)\quad \forall v\in H^1_0(\O).
$$
Define the bilinear form 
$$
    a(w,v) := (A\nabla w,\nabla v) + (\bb\cdot\nabla w+c w,v) \quad \forall w,v\in H^1_0(\Omega).
$$
The corresponding weak problem of \eqref{pde1_m1} is to find $u\in H^1_0(\Omega)$, such that 
\begin{equation}
    a(u,v) = (f_1,v)+(\bff_2,\nabla v)\quad \forall v\in H^1_0(\Omega).
    \label{pde1_m1_weak}
\end{equation}
It is easy to see that the bilinear form $a(\cdot,\cdot)$ is continuous with respect to the $H^1$-norm. For simplicity, we assume that the problem \eqref{pde1_m1} or its weak problem \eqref{pde1_m1_weak} has a unique solution. This assumption means that the bilinear form $a(\cdot,\cdot)$ satisfies an inf-sup condition. Thus, there exist two positive constants $\beta$ and $C_{con}$, such that
\beq
        \beta \leq \inf_{w\in H^1_0(\Omega)}\sup_{v\in H^1_0(\Omega)} \frac{a(w,v)}{ \| w\|_1 \|v\|_1} \quad\mbox{and}\quad
        a(w,v) \leq C_{con} \|w\|_1 \|v\|_1\quad \forall w,v\in H^1_0(\Omega).
    \label{infsup}
\eeq
\begin{rem} \label{rem_infsupa}
For the bilinear form $a$, there are some simple cases that it is coercive on $H^1_0(\O)$. For example, when $c-\dfrac{1}{2}\gradt \bb \geq 0$, then $(\bb\cdot\nabla v+c v,v)=((c-\dfrac{1}{2}\gradt \bb)v,v) \geq 0$, then $a(v,v) \geq C\|v\|_1^2$. But, there are other cases that the equation may be indefinite. For example, when $\bb=\bzero$ and $c= - \kappa^2$ for some $\kappa>0$, we get the  Helmholtz equation. The Helmholtz equation is clearly not coercive. But, as long as \eqref{pde1_m1_weak} has only $u=0$ as its solution when the righthand side of  \eqref{pde1_m1_weak} is zero, the weak problem \eqref{pde1_m1_weak} is still well-posed. The inf-sup constant $\beta$ may depend on the coefficients of the PDE \eqref{pde1_m1}.
\end{rem}
The conforming finite element approximation problem of \eqref{pde1_m1_weak} is: Find $u_\bullet^c \in \bbV_\blt$, such that,
\beq \label{discrete_2ndorder}
a(u_\bullet^c,v) = (f_1,v) + (\bff_2, \nabla v) \quad \forall v \in \bbV_\blt.
\eeq
Without assuming extra regularity, based on the duality argument, Schatz and Wang \cite{SW:96} proved that \eqref{discrete_2ndorder} has a unique solution provided that the mesh size $h_\bullet$ of $\cT_\bullet$ is smaller than a fixed mesh-size $h_{\mathtt{fix}}$. We assume that the mesh-size function $h_0$ of the initial mesh $\cT_0$ is smaller than this $h_{\mathtt{fix}}$. Due to the facts that all $\cT_\bullet\in\bbT$ are generated from $\cT_0$ and that the mesh size is monotonically decreasing by {\bf Assumption R1}, we have the following discrete inf-sup stability:
\beq \label{infsup_dis_2nd}
         \beta_{0} \leq \inf_{w_\bullet \in \bbV_\blt} \sup_{v_\bullet \in \bbV_\blt} \frac{a(w_\bullet,v_\bullet)}{\| w_\bullet\|_1\| v_\bullet\|_1} \quad \forall \cT_\bullet \in \bbT.
\eeq
\begin{rem} \label{remakinfsup}
As seen from Lemma 3 of  \cite{SW:96},  the stability constant $\beta_{0} $ is uniform with respect to the mesh-size but may depend on the fixed mesh size $h_{\mathtt{fix}}$.
\end{rem}
\begin{rem}
In Proposition 1 of \cite{BHP:17}, without assuming any regularity of the problem, the authors showed that there is a finite element space $\bbV_0$ that is rich enough (for example, its mesh-size smaller that a fixed $h_{\mathtt{fix}}$) such that the inf-sup condition \eqref{infsup_dis_2nd} holds, and for all discrete spaces $\bbV_\blt\supseteq \bbV_0$, the inf-sup condition \eqref{infsup_dis_2nd} holds with the same constant.  
\end{rem}

\begin{rem}
If we assume some regularity, for example, $u\in H^{1+s}(\O)$ for some $s>0$, then similar to the computations in Lemmas 35.14 and 35.16 of \cite{FE2}, the mesh-size $h_{\mathtt{fix}}$ can be explicitly computed.  
\end{rem}

\begin{rem}
As discussed in Remark \ref{rem_infsupa}, the bilinear form $a(\cdot,\cdot)$ is coercive for some simple cases, then there will be no  requirement on the mesh size. 
\end{rem}
We have the error equation:
$$
a(u-u_\bullet^c, v_\bullet) = 0 \quad \forall v_\bullet \in \bbV_\blt.
$$
The following best approximation holds (see \cite{XZ:03}),
\beq \label{apriori_2nd}
\|u-u_\bullet^c\|_1 \leq \dfrac{C_{con}} {\beta_{0} } \inf_{v\in \bbV_\bullet} \|u-v\|_1.
\eeq
Combining the approximation properties of \eqref{nodal_inter} and \eqref{sz}, we have an almost localized a priori error estimate with respect to local regularity.
\begin{thm}\label{thm_uapp}
Assume that $u\in H^{1}_0(\O)$,  $u|_T \in H^{1+s_T}(T)$ for $T\in \cT_{\blt,s}$,  and  $u|_{\Delta_T} \in H^{1+s_{\Delta_T}}({\Delta_T})$ for $T\in \cT_\blt\backslash\cT_{\blt,s}$, where $\max_{T\in\cT_{\blt,s}}\{s_T\}\leq 2$ and $\max_{T\in\cT_\blt \backslash\cT_{\blt,s}}\{s_{\Delta_T}\}\leq 2$,
\beq\label{uapp}
\|u-u_\bullet^c\|_1 \leq \dfrac{C_{con}} {\beta_{0} }
\inf_{v\in \bbV_\blt} \|u-v\|_1 \leq C (\sum_{T\in\cT_{\blt,s}}h_T^{s_T} |u|_{1+s_T,T}+\sum_{T\in\cT_\blt\backslash \cT_{\blt,s}}h_T^{s_{\Delta_T}} |u|_{1+s_{\Delta_T},\Delta_T}).
\eeq
\end{thm}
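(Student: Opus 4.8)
The first inequality carries no new content: it is precisely the quasi-best-approximation estimate \eqref{apriori_2nd}, which follows from the Galerkin orthogonality $a(u-u_\bullet^c,v_\bullet)=0$ together with the continuity constant $C_{con}$ and the discrete inf-sup constant $\beta_0$ of \eqref{infsup_dis_2nd}. So the whole task reduces to the second inequality, namely an a priori bound on the best approximation error $\inf_{v\in\bbV_\blt}\|u-v\|_1$ in terms of the localized, regularity-dependent interpolation quantities on the right-hand side.

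The plan is to exhibit a single explicit competitor $v=\Pi_\blt u\in\bbV_\blt$ and to estimate $\|u-\Pi_\blt u\|_1$ element by element. On the high-regularity part $\cT_{\blt,s}$, where $u|_T\in H^{1+s_T}(T)$ embeds into $C^0(T)$ and point values are available, I would like $\Pi_\blt u$ to coincide with the nodal interpolant $I^{nodal}_\blt u$ so that the purely \emph{local} estimate \eqref{nodal_inter} supplies the elementwise term $h_T^{s_T}|u|_{1+s_T,T}$. On the low-regularity part $\cT_\blt\backslash\cT_{\blt,s}$, where no point values exist, I would instead use the Scott--Zhang (or modified Cl\'ement) interpolant $I_{sz}$ and invoke the \emph{patch} estimate \eqref{sz} to obtain $h_T^{s_{\Delta_T}}|u|_{1+s_{\Delta_T},\Delta_T}$. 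Summing these two families of local bounds, and using that the shape-regularity assumption \textbf{R2} bounds the number of elements in each patch $\Delta_T$ so that the overlap is uniformly finite, then yields the claimed right-hand side.

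The main obstacle is that $I^{nodal}_\blt u$ and $I_{sz}u$ assign \emph{different} values at the vertices shared between the high- and low-regularity regions, so the two pieces cannot simply be glued into a conforming element of $\bbV_\blt$. The correct object is a single hybrid quasi-interpolant defined vertex by vertex: take the point value $u(z)$ at each vertex $z$ whose entire surrounding star of elements lies in $\cT_{\blt,s}$, and take the Scott--Zhang averaged value at every vertex that touches the low-regularity region (this also respects the homogeneous boundary condition, keeping $\Pi_\blt u\in\bbV_\blt$). With this one conforming function, elements in the interior of the high-regularity region reproduce $I^{nodal}_\blt$ and inherit the local bound, whereas good elements meeting the interface carry an averaged value on part of their vertices and must be handled by a patch argument. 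Showing that these interface contributions are absorbed into the patch terms already charged to the adjacent low-regularity elements, without double counting, is the delicate technical point; this is exactly the construction carried out in \cite{CHZ:17}, which I would follow. The remaining work — the elementwise triangle inequality, the scaling of the local estimates, and the final summation — is routine.
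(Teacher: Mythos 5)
Your proposal is correct and takes essentially the same route as the paper, which states this theorem without a separate proof, justifying it in one line as the combination of the quasi-best-approximation bound \eqref{apriori_2nd} with the interpolation estimates \eqref{nodal_inter} and \eqref{sz}. Your hybrid vertex-by-vertex interpolant (nodal values at vertices whose star lies in $\cT_{\blt,s}$, Scott--Zhang averages at vertices touching the low-regularity region, with the interface contributions charged to the neighboring patches as in \cite{CHZ:17}) is exactly the construction the paper leaves implicit behind the phrase ``almost localized,'' so you have in fact supplied the detail the paper glosses over rather than deviated from it.
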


\begin{rem} We do not consider the special case that the coefficient $A$ is very small in this paper. 
 The standard conforming finite element method may not be the most suitable choice, and additional specialized constructions and analyses would be required. One may seek solve it by an SUPG method, and construct a specially designed least-squares functional error estimator.
\end{rem}

\section{Least-Squares Finite Element Method and its Built-in Least-Squares Functional Error Estimator}
\setcounter{equation}{0}
In this section, we give a brief introduction of the standard least-squares finite element method and its built-in least-squares functional error estimator.

For the second-order elliptic equation \eqref{pde1_m1}, let the flux $\bsigma = \bff_2 -A \nabla u$. We have the  first-order system:
\beq \label{fosys1}
\left\{
\begin{array}{rclll}
\bsigma + A \nabla u &=&\bff_2& \mbox{ in } \O,
\\[1mm]
\gradt\bsigma +\bb\cdot \nabla u+c u  &=&f_1&  \mbox{ in } \O,\\
u &=& 0 &\mbox{ on } \p\O.
\end{array}
\right.
\eeq
For $u\in H^1_0(\O)$, we have  $\bsigma = -A \nabla u +\bff_2 \in L^2(\O)^d$ and $\gradt\bsigma  = f_1 - \bb\cdot \nabla u - c u\in L^2(\O)$, so $(\bsigma,u)\in  \bbX = H(\divvr;\O)\times H_0^1(\O)$. Note that we also have $\nabla u = A^{-1}(\bff_2-\bsigma)$, then $\bb\cdot\nabla u$ can be written as a linear combination of $\bb\cdot\nabla u$ and $\bb\cdot A^{-1}(\bff_2-\bsigma)$. Thus, the second equation in \eqref{fosys1} can also be written as
$$
\gradt\bsigma +B(\bsigma,u;\gamma) =g(\gamma)
$$
where
$
B(\btau,v;\gamma) = \gamma \bb\cdot\nabla v-(1-\gamma)\bb\cdot A^{-1}\btau+cv
\quad\mbox{and}\quad
g(\gamma) = f_1 -(1-\gamma)\bb\cdot A^{-1}\bff_2.
$
Here $\gamma \in [0,1]$ is a fixed number.
Thus, we have an equivalent and more general first-order system,
\beq \label{fosys2}
\left\{
\begin{array}{rclll}
\bsigma + A \nabla u &=&\bff_2& \mbox{ in } \O,
\\[1mm]
\gradt\bsigma +B(\bsigma,u;\gamma) &=& g(\gamma)&\mbox{ in } \O,\\
u &=& 0 &\mbox{ on } \p\O.
\end{array}
\right.
\eeq
For $(\btau,v)\in \bbX$, define least-squares functional for the system \eqref{fosys2},
\begin{eqnarray} \label{LS_functional}
\LS(\btau,v;f_1,\bff_2,\gamma) &:=& \|A^{-1/2}\btau + A^{1/2}\nabla v-A^{-1/2}\bff_2\|_0^2 +\|\gradt\btau +B(\btau,v;\gamma)-g(\gamma) \|_0^2.
\end{eqnarray}
The corresponding least-squares minimization problem is:
\begin{eqnarray}
\mbox{Find } (\bsigma,u)\in \bbX  &\mbox{ such that }&
\LS(\bsigma,u;f_1,\bff_2,\gamma) = \inf_{(\btau,v)\in \bbX}\LS(\btau,v;f_1,\bff_2,\gamma) . 
\end{eqnarray}
The Euler-Lagrange weak problem is: Find $(\bsigma,u)\in  \bbX$, such that
\beq\label{lsvp}
b_{ls}((\bsigma,u), (\btau,v);\gamma) = F_{ls}(\btau,v;\gamma) \quad \forall(\btau,v)\in  \bbX,
\eeq
where the bilinear form $b_{ls}$ and the linear form $F_{ls}$ are defined for all  $(\brho,w)$ and $(\btau,v)\in \bbX$ as:
\begin{eqnarray*}
b_{ls}((\brho,w), (\btau,v);\gamma) &=& (\brho+A\nabla w, A^{-1}\btau+\nabla v)+ (\gradt\brho +B(\brho,w;\gamma),\gradt\btau +B(\btau,v;\gamma)) ,\\[1mm]
F_{ls}(\btau,v;\gamma) &=& (\bff_2,  A^{-1}\btau+\nabla v)+ (g(\gamma) ,\gradt\btau  +B(\btau,v;\gamma)).
\end{eqnarray*}
For the case $\gamma=1$, the following norm equivalence is well-known,
\beq
\label{ls_equvalience}
C_1\tri (\btau,v )\tri^2 \leq \LS(\btau,v; 0,\bzero;1) \leq C_2\tri (\btau,v )\tri^2 \quad
 \forall(\btau,v)\in  \bbX,
\eeq
for some positive constants $C_1$ and $C_2$. Various proofs of the norm-equivalence \eqref{ls_equvalience} (mainly the coercivity of $b_{ls}$ which is equivalent to the first inequality of \eqref{ls_equvalience}) of the general second-order elliptic equation with the simple assumption that the original weak problem has a unique solution can be found in \cite{CLMM:94,BLP:97,Cai:04,Ku:07,CZ:10b,CFZ:15}. A recent simple proof can be found in \cite{Zhang:23}. 
Here, we present a proof for the general case $\gamma\in [0,1]$ in the sprit of Theorem 3.1 of \cite{Zhang:23}.
\begin{thm}
The following norm equivalence is true for  $\gamma\in [0,1]$:
\beq
\label{ls_equ_gamma}
C_{ls,coe}\tri (\btau,v )\tri^2 \leq \LS(\btau,v; 0,\bzero;\gamma) \leq C_{ls,con}\tri (\btau,v )\tri^2 \quad  \forall(\btau,v)\in  \bbX,
\eeq
for some positive constants $C_{ls,coe}$ and $C_{ls,con}$. 
\end{thm}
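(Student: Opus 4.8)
The plan is to split the claim into its continuity (upper) and coercivity (lower) halves, and to handle the general parameter $\gamma\in[0,1]$ by reducing to the already-established case $\gamma=1$ recorded in \eqref{ls_equvalience}, rather than re-deriving coercivity from scratch in the spirit of Theorem 3.1 of \cite{Zhang:23}. Throughout I set $f_1=0$, $\bff_2=\bzero$, so $g(\gamma)=0$ and
$$
\LS(\btau,v;0,\bzero;\gamma)=\|A^{-1/2}\btau+A^{1/2}\nabla v\|_0^2+\|\gradt\btau+B(\btau,v;\gamma)\|_0^2 .
$$

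The upper bound is routine. I would apply the triangle inequality to each of the two $L^2$-squared residuals and invoke the uniform bounds $\Lambda_0\le A\le\Lambda_1$ together with $\bb\in L^\infty$ and $c\in L^\infty$. Each of the pieces $\|A^{-1/2}\btau\|_0$, $\|A^{1/2}\nabla v\|_0$, $\|\gradt\btau\|_0$, $\|\bb\cdot\nabla v\|_0$, $\|\bb\cdot A^{-1}\btau\|_0$, $\|cv\|_0$ is then bounded by a constant depending only on $\Lambda_0,\Lambda_1,\|\bb\|_\infty,\|c\|_\infty$ times $\tri(\btau,v)\tri$, which produces $C_{ls,con}$.

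For the coercivity bound, the key is the algebraic identity relating $B(\cdot;\gamma)$ to $B(\cdot;1)$, namely
$$
B(\btau,v;\gamma)-B(\btau,v;1)=-(1-\gamma)\,\bb\cdot A^{-1}(\btau+A\nabla v).
$$
The crucial observation is that $\btau+A\nabla v=A^{1/2}\bigl(A^{-1/2}\btau+A^{1/2}\nabla v\bigr)$, so the right-hand side is, up to the bounded factor $\bb\cdot A^{-1}A^{1/2}$, exactly the first residual appearing in $\LS$. Hence
$$
\|B(\btau,v;\gamma)-B(\btau,v;1)\|_0\le C(1-\gamma)\,\|A^{-1/2}\btau+A^{1/2}\nabla v\|_0 ,
$$
with $C$ depending only on $\|\bb\|_\infty,\Lambda_0,\Lambda_1$. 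Writing $\gradt\btau+B(\btau,v;1)=\bigl[\gradt\btau+B(\btau,v;\gamma)\bigr]-\bigl[B(\btau,v;\gamma)-B(\btau,v;1)\bigr]$ and using $(a+b)^2\le 2a^2+2b^2$, I obtain $\LS(\btau,v;0,\bzero;1)\le C'\,\LS(\btau,v;0,\bzero;\gamma)$ for a constant $C'=\max(1+2C^2,2)$ that is independent of both $(\btau,v)$ and $\gamma$. Combining with the first inequality of \eqref{ls_equvalience} yields $C_1\tri(\btau,v)\tri^2\le\LS(\btau,v;0,\bzero;1)\le C'\LS(\btau,v;0,\bzero;\gamma)$, so $C_{ls,coe}:=C_1/C'$ works.

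The main obstacle—indeed the only nontrivial point—is recognizing that the perturbation $B(\cdot;\gamma)-B(\cdot;1)$ is controlled by the first least-squares residual term alone, rather than by the full norm $\tri\cdot\tri$. Were it merely bounded by $\tri(\btau,v)\tri$, the reduction would collapse, since one cannot absorb a full-norm quantity back into the functional. The gain comes precisely from the cancellation $\gamma\,\bb\cdot\nabla v-(1-\gamma)\bb\cdot A^{-1}\btau=\bb\cdot\nabla v-(1-\gamma)\bb\cdot A^{-1}(\btau+A\nabla v)$, which isolates the combination $\btau+A\nabla v$ that vanishes on the solution manifold and is already penalized quadratically inside $\LS$. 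Everything else is bookkeeping with the coefficient bounds.
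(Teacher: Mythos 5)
Your proof is correct, but it takes a genuinely different route from the paper's. The paper proves coercivity directly and self-containedly: it uses the integration-by-parts identity
$$
a(v,w)=(\nabla v+A^{-1}\btau,\,A\nabla w+(1-\gamma)\bb w)+(\gradt\btau+B(\btau,v;\gamma),\,w),
$$
combines it with the inf-sup condition \eqref{infsup} of the weak form $a(\cdot,\cdot)$ to recover $\|v\|_1\le C\,\LS(\btau,v;0,\bzero;\gamma)^{1/2}$, and then peels off $\|\btau\|_0$ and $\|\gradt\btau\|_0$ by triangle inequalities --- so for the paper the case $\gamma=1$ is re-proved rather than assumed. You instead treat the well-known $\gamma=1$ equivalence \eqref{ls_equvalience} as a black box and reduce the general case to it via the identity $B(\btau,v;\gamma)-B(\btau,v;1)=-(1-\gamma)\,\bb\cdot A^{-1}(\btau+A\nabla v)$, observing correctly that this perturbation is controlled by the \emph{first} residual $\|A^{-1/2}\btau+A^{1/2}\nabla v\|_0$ rather than by the full norm $\tri(\btau,v)\tri$ --- this is exactly the point that makes the absorption work, and your constant $C'=\max(1+2C^2,2)$ is uniform in $\gamma\in[0,1]$ since $(1-\gamma)\le 1$. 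What each approach buys: yours is shorter, makes the $\gamma$-dependence of the constants transparent, and (applying the same identity with the roles of $\gamma$ and $1$ swapped) actually shows that all the functionals $\LS(\cdot\,;0,\bzero;\gamma)$, $\gamma\in[0,1]$, are mutually equivalent with $\gamma$-uniform constants, a structural fact the paper does not state; the paper's argument is self-contained (its constants trace back to the inf-sup constant $\beta$ of \eqref{infsup} rather than to an external citation), works without invoking the literature for $\gamma=1$, and rehearses the identity \eqref{vww} that is reused later in the residual upper bound of Lemma \ref{lem_upp}, which is presumably why the authors chose it.
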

\begin{proof}
By the integration by parts, we have $(\btau, \nabla w)+(\gradt\btau,w) =0$ for all $(\btau,w) \in \bbX$. 
Then for $\btau \in H(\divvr;\O)$ and $v$ and $w$ in $H_0^1(\O)$, we have
\begin{eqnarray} \nonumber
a(v,w) &=& (A\nabla v, \nabla w)+ (\bb\cdot\nabla v+c v,w)  =  (A\nabla v+\btau, \nabla w)+(\gradt\btau+ \bb\cdot\nabla v+c v,w) \\ \nonumber 
&=&  (A\nabla v+\btau, \nabla w)+(\gradt\btau+ B(\btau,v;\gamma),w) + (1-\gamma)(\nabla v+A^{-1}\btau, \bb w) \\ \label{vww}
&=&  (\nabla v+A^{-1}\btau, A\nabla w+(1-\gamma)\bb w)+(\gradt\btau+ B(\btau,v;\gamma),w).
\end{eqnarray}
It follows from \eqref{infsup}, \eqref{vww}, the Cauchy-Schwarz and Poincar\'e inequalities, the assumption on $A$ and $\bb$, for any $(\btau,v) \in \bbX$, 
\begin{eqnarray*}
\beta \|v\|_1 &\leq& \sup_{w\in H_0^1(\O)} \dfrac{a(v,w)}{\|w\|_1} 
=\sup_{w\in H_0^1(\O)}\dfrac{ (\nabla v+A^{-1}\btau, A\nabla w+(1-\gamma)\bb w)+(\gradt\btau+ B(\btau,v;\gamma),w) }{\|w\|_1} \\
&\leq & C (\|A^{1/2}\nabla v+A^{-1/2}\btau\|_0 + \|\gradt\btau +  B(\btau,v;\gamma)\|_0) \leq C\, \LS(\btau,v; 0,\bzero;\gamma)^{1/2}.
\end{eqnarray*}
By the triangle inequality, we have
$$
\|\btau\|_0 \leq \|\btau+A\nabla v\|_0+\|A\nabla v\|_0 \leq C(\|A^{1/2}\nabla v+A^{-1/2}\btau\|_0 + \|v\|_1)\leq C \LS(\btau,v; 0,\bzero;\gamma)^{1/2}.
$$
Using the triangle inequality and the fact that $\|B(\btau,v;\gamma)\|_0  \leq C (\|\btau\|_0+\|v\|_1)$, we have
$$
\|\gradt\btau\|_0 \leq \|\gradt\btau +  B(\btau,v;\gamma)\|_0+\|B(\btau,v;\gamma)\|_0 \leq C(\|\gradt\btau +  B(\btau,v;\gamma)\|_0 + \|\btau\|_0+\|v\|_1)\leq C \LS(\btau,v; 0,\bzero;\gamma)^{1/2}.
$$
Thus, $C_{ls,coe}\tri (\btau,v )\tri^2 \leq \LS(\btau,v; 0,\bzero;\gamma)$ is true for $(\btau,v) \in \bbX$.

The other bound is relatively easy. By the definition of the $\LS$ functional, the triangle inequality, and the fact that $\|B(\btau,v;\gamma)\|_0  \leq C (\|\btau\|_0+\|v\|_1)$ for $(\btau,v) \in \bbX$,  we immediately have
\begin{eqnarray*}
\LS(\btau,v; 0,\bzero;\gamma)^{1/2} &\leq& C (\|A^{1/2}\nabla v+A^{-1/2}\btau\|_0 + \|\gradt\btau +  B(\btau,v;\gamma)\|_0) \leq C\tri (\btau,v )\tri \quad \forall(\btau,v) \in \bbX.
\end{eqnarray*}
The theorem is proved.
\end{proof}
\begin{rem}
The first inequality in \eqref{ls_equ_gamma} is equivalent to the coercivity of the bilinear form $b_{ls}$ in $\bbX$, and the second inequality in \eqref{ls_equ_gamma} is equivalent to the continuity of the bilinear form $b_{ls}$ in $\bbX$.
\end{rem}
\begin{rem}
We can choose different weights for the two terms in the least-squares functional \eqref{LS_functional}. The second term can be $\|\kappa(\gradt\btau +B(\btau,v;\gamma)-g(\gamma))\|_0^2$, with a positive function $\kappa$. We use one here for simplicity.
\end{rem}
The least-squares finite element problem in $\bbX_\blt$ is:
\begin{eqnarray*}
\mbox{Find } (\bsigma_\blt^{ls},u_\blt^{ls})\in \bbX_\blt  &\mbox{ such that }&
\LS(\bsigma_\blt^{ls},u_\blt^{ls};f_1,\bff_2,\gamma) = \inf_{(\btau,v)\in \bbX_\blt}\LS(\btau,v;f_1,\bff_2,\gamma). 
\end{eqnarray*}
Or, equivalently: Find $(\bsigma_\blt^{ls},u_\blt^{ls})\in \bbX_\blt$, such that
\beq\label{lsfem1h}
b_{ls}((\bsigma_\blt^{ls},u_\blt^{ls}), (\btau,v);\gamma) = F_{ls}(\btau,v;\gamma) \quad \forall(\btau,v)\in  \bbX_\blt.
\eeq
The a priori error estimate for this problem with an $H^{-1}$-righthand side of $\gamma = 1$ can be found in Theorem 4.1 of \cite{Zhang:23}. For other $\gamma\in [0,1]$, the same a priori error estimate holds due to the continuity and coercivity of the bilinear form $b_{ls}$.

We introduce some notations of the built-in least-squares functional error estimator associated a mesh $\cT_\bullet \in \bbT$. Let  $v_\blt \in \bbV_\blt$  and $\btau_\blt \in \bbW_\blt$ be two arbitrary finite element functions in their spaces associated with the mesh  $\cT_\bullet$, respectively.  For an element $T\in \cT_\blt$, we define the element-wise least-squares functional error indicator as
\begin{eqnarray}\label{ap_indicator}
\eta^{ls}_\bullet(T;\btau_\bullet,v_\bullet) &:=& \left(\|A^{-1/2}\btau_\bullet + A^{1/2}\nabla v_\bullet-A^{-1/2}\bff_2\|_{0,T}^2 +\|\gradt\btau_\bullet +B(\btau_\blt,v_\bullet;\gamma)-g(\gamma) \|_{0,T}^2\right)^{1/2}.
\end{eqnarray}
Here, for simplicity, we omit the parameter $\gamma$ in the notation of $\eta^{ls}_\bullet$.
For a collection of elements $\cU_\bullet\subset \cT_\bullet$, the least-squares functional a posteriori error estimator defined on $\cU_\bullet$ is
\beq \label{ap_indicator_cU}
\eta^{ls}_\bullet(\cU_\bullet; \btau_\bullet,v_\bullet):= \left( \sum_{T\in\cU_\blt}
\eta^{ls}_\bullet(T;\btau_\bullet,v_\bullet)^2\right)^{1/2}.
\eeq
For the case $\cU_\blt = \cT_\blt$, we use a simpler notation,
\beq \label{ap_estimator}
\eta^{ls}_\bullet(\btau_\bullet,v_\bullet) := \eta^{ls}_\bullet(\cT_\bullet; \btau_\bullet,v_\bullet).
\eeq
It is easy to see that 
\beq \label{lsap}
\eta^{ls}_\bullet(\btau_\bullet,v_\bullet) = \LS(\btau_\bullet,v_\bullet;f_1,\bff_2,\gamma)^{1/2} 
\eeq
Using the facts that $\bff_2=A \nabla u +\bsigma$ and $g(\gamma)=\gradt\bsigma+B(\bsigma,u;\gamma)$ from \eqref{fosys2}, we have the following identity:
\begin{eqnarray*}
\LS(\btau_\bullet,v_\bullet;f_1,\bff_2,\gamma) &=&\|A^{-1/2}\btau_\bullet + A^{1/2}\nabla v_\bullet -A^{-1/2}\bff_2\|_0^2 +\|\gradt\btau_\bullet +B(\btau_\blt,v_\bullet;\gamma)-g(\gamma) \|_0^2\\  
&=& \|A^{-1/2}(\bsigma- \btau_\bullet) + A^{1/2}\nabla (u-v_\bullet)\|_0^2 +\|\gradt(\bsigma- \btau_\bullet)  +B(\bsigma-\btau_\blt,u-v_\bullet;\gamma)\|_0^2  \\
&=& \LS(\bsigma- \btau_\bullet, u-v_\bullet;0,\bzero,\gamma). 
\end{eqnarray*}
By  \eqref{ls_equ_gamma},  the following reliability and efficiency bounds are true,
\begin{eqnarray} \label{ls_rel_eff1}
C_{ls,coe} \tri(\bsigma- \btau_\bullet,u-v_\bullet)\tri ^2 &\leq & \LS(\bsigma- \btau_\bullet,u-v_\bullet;0,\bzero,\gamma) = \LS(\btau_\bullet,v_\bullet;f_1,\bff_2,\gamma)  \\ \label{ls_rel_eff2}
&= &\eta_\bullet^{ls}(\btau_\bullet,v_\bullet)^2\leq C_{ls,con} \tri(\bsigma- \btau_\bullet,u-v_\bullet)\tri ^2.
\end{eqnarray}
An important fact of \eqref{ls_rel_eff1} and  \eqref{ls_rel_eff2} is that $(\btau_\bullet,v_\bullet) \in \bbX$ does not need to be the numerical solution of a specific problem, say the LSFEM problem \eqref{lsfem1h}. In fact, we can even relax the condition to let the pair to be any functions in $\bbX$. We restrict them in their finite element spaces $\bbX_\blt$ associated with $\cT_\bullet$ for simplicity of the presentation only. As we can see from the plain convergence proof will be presented later (or \cite{FP:20} for the LSFEM case), if the pair $(\btau_\bullet,v_\bullet)$ is not a good approximation of the true solution, then the adaptive algorithm is not going to converge. In other words, to have a good adaptive numerical method, we need both a priori and a posteriori error analysis to be valid.

Let  $(\bsigma_\blt^{ls},u_\blt^{ls})\in \bbX_\blt $ be the numerical solution of the LSFEM problem \eqref{lsfem1h}, we immediately have the reliability and efficiency of the least-squares functional error estimator for the LSFEM approximation \eqref{lsfem1h}, 
\beq \label{lsfem_rel_eff}
\sqrt{C_{ls,coe}} \tri(\bsigma - \bsigma_\blt^{ls},u-u_\blt^{ls})\tri \leq  \eta^{ls}_\blt(\bsigma_\blt^{ls},u_\blt^{ls}) \leq \sqrt{C_{ls,con}} \tri(\bsigma - \bsigma_\blt^{ls},u-u_\blt^{ls})\tri.
\eeq

\section{Non-Intrusive Least-Squares Functional Error Estimator for Conforming FEM of Elliptic Equation}
\setcounter{equation}{0}
In this section. we develop a posteriori error estimator for the discrete problem \eqref{discrete_2ndorder} using the least-squares functional estimator. With  the solution of \eqref{discrete_2ndorder} $u_\bullet^c\in\bbV_\blt$ available, in order to use the least-squares functional error estimator, we need to construct a $\bsigma_\bullet\in \bbW_\blt$. The simplest way to get a good $\bsigma_\bullet$ is to replace the function $v$ in the least-squares functional \eqref{LS_functional} by $u$'s approximation $u_\bullet^c\in\bbV_\blt\subset H^1_0(\O)$. We get a new functional,
$$
\JJ(\btau; u_\bullet^c, f_1,\bff_2,\gamma) := \LS(\btau,u_\bullet^c;f_1,\bff_2,\gamma)= \|A^{-1/2}\btau + A^{1/2}\nabla u_\bullet^c-A^{-1/2}\bff_2\|_0^2 +\|\gradt\btau +B(\btau,u_\bullet;\gamma)-g(\gamma) \|_0^2.
$$
Then the corresponding minimization problem to find $\bsigma_\bullet^{r}\in \bbW_\blt$ is:
$$
\mbox{Find } \bsigma_\bullet^{r} \in \bbW_\blt  \mbox{ such that }
\JJ(\bsigma_\bullet^{r};u_\bullet^c, f_1, \bff_2,\gamma) = \inf_{\btau\in \bbW_\blt} \JJ(\btau;u_\bullet^c, f_1, \bff_2,\gamma). 
$$
Or, equivalently: find $\bsigma_\bullet^{r} \in \bbW_\blt$, such that,
\begin{eqnarray} \label{flux_recovery_discrete}
b_{r}(\bsigma_\bullet^{r},\btau;\gamma) = F_r(\btau;u_\bullet^c,\gamma)\quad\forall \btau\in \bbW_\blt,
\end{eqnarray}
where the bilinear and linear forms are defined for every $\brho\in H(\divvr;\O)$, $\btau\in H(\divvr;\O)$, and $v\in H^1_0(\O)$ as follows,
\begin{eqnarray*} 
b_{r}(\brho,\btau;\gamma)&:=& (A^{-1}\brho,\btau) + (\gradt\brho-(1-\gamma)\bb\cdot A^{-1}\brho,\gradt\btau-(1-\gamma)\bb\cdot A^{-1}\btau),
\\ 
F_r(\btau;v,\gamma)&:=& (A^{-1}\bff_2-\nabla v,\btau) + (f_1 -(1-\gamma)\bb\cdot A^{-1}\bff_2-\gamma\bb\cdot\nabla v-cv,\gradt\btau-(1-\gamma)\bb\cdot A^{-1}\btau).
\end{eqnarray*}
We define the following continuous problem: Find $\bsigma\in H(\divvr;\O)$, such that,  
\beq \label{flux_recovery}
b_{r}(\bsigma,\btau;\gamma) = F_r(\btau;u,\gamma)\quad \forall \btau\in H(\divvr;\O).
\eeq
Note that the exact solution $(\bsigma, u)$ of the original first-order system \eqref{fosys1} is the solution of \eqref{flux_recovery}.
The problem \eqref{flux_recovery_discrete} can be viewed as a finite element approximation of the continuous problem \eqref{flux_recovery} with the exact solution $u$ replaced by its approximation $u_\bullet^c$.

Both the problems \eqref{flux_recovery} and \eqref{flux_recovery_discrete} are uniquely solvable. One way to check the coercivity of \eqref{flux_recovery} and \eqref{flux_recovery_discrete} is from the norm-equivalence \eqref{ls_equ_gamma}. Let $v=0$ in \eqref{ls_equ_gamma}, then 
$$
C_{ls,coe}\|\btau\|_{H(\divvr;\O)}^2 =C_{ls,coe}\tri (\btau,0 )\tri^2 \leq \LS(\btau,0; 0,\bzero,\gamma) \quad
 \forall \btau \in  H(\divvr;\O).
$$
 
\begin{thm}
The following a priori estimate for $\bsigma_\bullet^{r}\in \bbW_\blt$ as an approximation of $\bsigma$ of \eqref{flux_recovery} or \eqref{fosys1} is true:
\begin{eqnarray} \label{apriori_sigma}
\|\bsigma-\bsigma_\bullet^{r}\|_{H(\divvr)} \leq  C \left( \inf_{\btau \in \bbW_\bullet} \|\bsigma-\btau\|_{H(\divvr)} +\|u-u_\blt^c\|_1 \right)\leq  C\inf_{(\btau,v)\in \bbX_\blt}\tri(\bsigma-\btau, u-v)\tri.
\end{eqnarray}
Assume that $u$ has the regularity assumptions in Theorem \ref{thm_uapp} and further assume that $\bsigma|_T \in H^{\ell_T}(T)$ and $\gradt\bsigma|_T \in H^{t_T}(T)$,  for $1\leq \ell_T \leq 2$ and $1\leq t_T \leq 2$. Then the following a priori error estimate holds:
\begin{eqnarray} \label{apriori_rsigma}
\|\bsigma-\bsigma_\bullet^{r}\|_{H(\divvr)} &\leq &C (\sum_{T\in\cT_{\blt,s}}h_T^{s_T} |u|_{1+s_T,T}+\sum_{T\in\cT_\blt\backslash \cT_{\blt,s}}h_T^{s_{\Delta_T}} |u|_{1+s_{\Delta_T},\Delta_T} \\ \nonumber
&&+\sum_{T\in\cT_\blt}(h_T^{\ell_T} |\bsigma|_{\ell_T,T} + h_T^{t_T} |\gradt\bsigma|_{t_T,T})).
\end{eqnarray}
\end{thm}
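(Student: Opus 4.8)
The plan is to treat the discrete recovery problem \eqref{flux_recovery_discrete} as a conforming Galerkin discretization of the coercive problem \eqref{flux_recovery} in which \emph{only the data} have been perturbed (the exact $u$ is replaced by $u_\blt^c$), so that a Strang-type argument applies. First I would record that $b_r(\cdot,\cdot;\gamma)$ is both coercive and continuous on $H(\divvr;\O)$: coercivity $C_{ls,coe}\|\btau\|_{H(\divvr)}^2\leq b_r(\btau,\btau;\gamma)$ is exactly the case $v=0$ of the norm equivalence \eqref{ls_equ_gamma} (as already noted just after \eqref{flux_recovery}), while continuity $|b_r(\brho,\btau;\gamma)|\leq C\|\brho\|_{H(\divvr)}\|\btau\|_{H(\divvr)}$ follows from boundedness of $A^{-1}$ and $\bb$ together with the triangle inequality in the definition of $b_r$.

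The key device is an auxiliary discrete flux $\tilde\bsigma_\blt\in\bbW_\blt$ solving the discrete problem with the \emph{exact} primal data, $b_r(\tilde\bsigma_\blt,\btau;\gamma)=F_r(\btau;u,\gamma)$ for all $\btau\in\bbW_\blt$. I would split the error as $\bsigma-\bsigma_\blt^{r}=(\bsigma-\tilde\bsigma_\blt)+(\tilde\bsigma_\blt-\bsigma_\blt^{r})$ and bound the two pieces separately. The first piece is a genuine Galerkin error for one and the same right-hand side $F_r(\cdot;u,\gamma)$, so the standard C\'ea estimate (coercivity and continuity of $b_r$) gives $\|\bsigma-\tilde\bsigma_\blt\|_{H(\divvr)}\leq C\inf_{\btau\in\bbW_\blt}\|\bsigma-\btau\|_{H(\divvr)}$. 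The second piece lies in $\bbW_\blt$ and satisfies $b_r(\tilde\bsigma_\blt-\bsigma_\blt^{r},\btau;\gamma)=F_r(\btau;u,\gamma)-F_r(\btau;u_\blt^c,\gamma)$ for all $\btau\in\bbW_\blt$; testing with $\btau=\tilde\bsigma_\blt-\bsigma_\blt^{r}$ and using coercivity reduces matters to the data perturbation. Since $F_r(\btau;v,\gamma)$ is affine in $v$, its difference involves only $\nabla(u-u_\blt^c)$ and $u-u_\blt^c$ paired against $\btau$ and $\gradt\btau-(1-\gamma)\bb\cdot A^{-1}\btau$, so Cauchy--Schwarz and boundedness of the coefficients yield $|F_r(\btau;u,\gamma)-F_r(\btau;u_\blt^c,\gamma)|\leq C\|u-u_\blt^c\|_1\|\btau\|_{H(\divvr)}$, whence $\|\tilde\bsigma_\blt-\bsigma_\blt^{r}\|_{H(\divvr)}\leq C\|u-u_\blt^c\|_1$. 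Adding the two bounds proves the first inequality of \eqref{apriori_sigma}.

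For the second inequality of \eqref{apriori_sigma} I would invoke the conforming a priori bound \eqref{apriori_2nd}, $\|u-u_\blt^c\|_1\leq C\inf_{v\in\bbV_\blt}\|u-v\|_1$, and then exploit the fact that the combined norm decouples: since $\tri(\bsigma-\btau,u-v)\tri^2=\|\bsigma-\btau\|_{H(\divvr)}^2+\|u-v\|_1^2$, the infimum over $\bbX_\blt=\bbW_\blt\times\bbV_\blt$ separates into independent infima over $\btau$ and $v$. The elementary inequality $a+b\leq\sqrt2\,(a^2+b^2)^{1/2}$ then converts $\inf_{\btau}\|\bsigma-\btau\|_{H(\divvr)}+C\inf_{v}\|u-v\|_1$ into $C\inf_{(\btau,v)\in\bbX_\blt}\tri(\bsigma-\btau,u-v)\tri$.

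Finally, \eqref{apriori_rsigma} follows by inserting concrete interpolants into the first inequality of \eqref{apriori_sigma}. For the $u$-contribution I would quote Theorem \ref{thm_uapp} verbatim, which already bounds $\|u-u_\blt^c\|_1$ by the two $u$-sums in \eqref{apriori_rsigma}. For the flux contribution I would take $\btau=I^{rt}_\blt\bsigma$ in the infimum and estimate $\|\bsigma-I^{rt}_\blt\bsigma\|_{H(\divvr)}^2=\sum_{T}\big(\|\bsigma-I^{rt}_\blt\bsigma\|_{0,T}^2+\|\gradt(\bsigma-I^{rt}_\blt\bsigma)\|_{0,T}^2\big)$ termwise via \eqref{RT_inter1} and \eqref{RT_inter2}, then pass from the $\ell^2$ sum to the $\ell^1$ sum using $(\sum_T c_T^2)^{1/2}\leq\sum_T c_T$ to recover the stated flux sums. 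I expect the main obstacle to be organizational rather than technical: correctly isolating the data-perturbation term through the auxiliary solution $\tilde\bsigma_\blt$ and verifying that the affine dependence of $F_r$ on its primal argument produces exactly the factor $\|u-u_\blt^c\|_1$. Once this Strang decomposition is in place, coercivity of $b_r$ and the interpolation estimates render the remaining steps routine.
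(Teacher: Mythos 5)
Your proposal is correct and takes essentially the same route as the paper: the paper's proof of \eqref{apriori_sigma} simply defers to Theorem 4.1 of \cite{CZ:10b}, which is precisely the coercivity-based perturbed-Galerkin (Strang/C\'ea) argument you spell out via the auxiliary discrete flux $\tilde\bsigma_\blt$, and your derivation of \eqref{apriori_rsigma} from \eqref{apriori_sigma}, \eqref{uapp}, \eqref{RT_inter1} and \eqref{RT_inter2} is exactly the paper's assembly. The only difference is presentational: you make explicit the argument that the paper outsources to the reference.
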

\begin{proof}
The proof of \eqref{apriori_sigma} with $\gamma=1$ can be found in Theorem 4.1 of \cite{CZ:10b}. The general case with $\gamma \in [0,1]$ can be proved similarly. The a priori estimate \eqref{apriori_rsigma} is a simple result of the best approximation result \eqref{apriori_sigma}, the a priori estimate \eqref{uapp}, and the approximation results \eqref{RT_inter1} and \eqref{RT_inter2}. 
\end{proof}
From \eqref{apriori_sigma}, we should choose the approximation space $\bbW_\blt$ for $\bsigma$ in compatible with $\bbV_\blt$ of $u$ to keep the approximation order optimal.

Let $\cT_\bullet \in \bbT$. We have computed $u_\bullet^c \in \bbV_\blt$ from the conforming finite element discrete problem \eqref{discrete_2ndorder} and recovered a numerical flux $\bsigma_\bullet^{r} \in \bbW_\blt$ from \eqref{flux_recovery_discrete}. In view of the least-squares functional error estimator, we have all the ingredients. Thus, we can use the least-squares functional error estimator $\eta^{ls}_\blt(\bsigma_\bullet^{r},u_\bullet^c)$ defined in \eqref{ap_estimator} (and its local contributions defined in \eqref{ap_indicator}, \eqref{ap_indicator_cU}). 
%
If we view $\eta^{ls}_\blt(\bsigma_\blt^{r},u_\blt^{c})$ as the estimator for the $u$-problem \eqref{discrete_2ndorder} only, the following reliability bound is a simple consequence of \eqref{ls_rel_eff1}, 
\beq
\|u-u_\blt^c\|_1 \leq \tri(\bsigma - \bsigma_\blt^{r},u-u_\blt^{c})\tri \leq  \dfrac{1}{\sqrt{C_{ls,coe}}}\eta^{ls}_\blt(\bsigma_\blt^{r},u_\blt^{c}).
\eeq
On the other hand, the efficiency bound $\eta^{ls}_\blt(\bsigma_\blt^{r},u_\blt^{c}) \leq C \|u-u_\blt^c\|_1$ is not easy to prove, see the discussion in Remark \ref{remeff}. Thus, we seek an alternative view to see the error estimator $\eta^{ls}_\blt(\bsigma_\blt^{r},u_\blt^{c})$ not as an error estimator for the original problem but as the estimator for a combined two-step problem.

\begin{rem}
The extra computational cost of constructing the a posteriori error estimator is solving the global problem \eqref{flux_recovery_discrete}. Compared to the standard LSFEM, where both $u$ and $\bsigma$ is solved, the approach here has a similar computational cost. As discussed and implemented in \cite{CZ:10b}, if we seek optimal computational cost, we can use well-studied multigrid method for $H(\divvr)$ problem to solve \eqref{flux_recovery_discrete}.
\end{rem}

\section{Alternative view on Non-Intrusive Least-Squares Functional Error Estimator}
\setcounter{equation}{0}

In this section, we present an alternative view on the two-step (solve-recover) \eqref{discrete_2ndorder}-\eqref{flux_recovery_discrete} procedure of the least-squares functional error estimator. We rewrite it as an equivalent combined problem solving both the flux $\bsigma$ and solution $u$.

\noindent{\bf Combined two-step problem.}
Consider the following problem: find $(\bsigma,u) \in \bbX$ such that
\begin{equation} \label{combined_weak}
    \cA_{2s}((\bsigma,u), (\btau,v);\gamma) = G_{2s}(\btau,v;\gamma) \quad \forall (\btau,v) \in \bbX,
\end{equation}
where for all $(\brho,w),(\btau,v)\in \bbX$, the combined bilinear form $\cA_{2s}$ and linear form $G_{2s}$ are defined by 
\begin{eqnarray}
    \cA_{2s}((\brho,w),(\btau,v);\gamma)&:=& a(w,v) + (A^{-1}\brho+\nabla w,\btau) + (\gradt\brho+B(\brho,w;\gamma),\gradt\btau- (1-\gamma)\bb\cdot A^{-1}\btau),\\ 
    G_{2s}(\btau,v;\gamma)&:=& 
    (f_1,v)+ (\bff_2,\nabla v+A^{-1}\btau)
    +(f_1 - (1-\gamma)\bb\cdot A^{-1}\bff_2, \gradt\btau - (1-\gamma)\bb\cdot A^{-1}\btau). 
\end{eqnarray}
Taking $\btau=0$ in \eqref{combined_weak}, we get the original weak problem \eqref{pde1_m1_weak}. Thus $u$ can be obtained from \eqref{combined_weak} without invoking $\bsigma$ or $\btau$. After obtaining $u$, let $v=0$  in \eqref{combined_weak}, we get the least-squares flux recovery problem \eqref{flux_recovery}. Then, the combined two-step problem \eqref{combined_weak} is equivalent to the two-step process, \eqref{pde1_m1_weak} and \eqref{flux_recovery}: their solutions are identical. 

Now consider the finite element approximation of  \eqref{combined_weak} in $\bbX_\blt$, we have the following discrete combined two-step problem.

\noindent{\bf Finite element approximation of the combined two-step problem.}
Find $(\bsigma_\bullet^{r},u_\bullet^c) \in \bbX_\blt$ such that
\begin{equation} \label{combined_discrete}
    \cA_{2s}((\bsigma_\bullet^{r},u_\bullet^c), (\btau,v);\gamma) = G_{2s}(\btau,v;\gamma)\quad \forall (\btau,v) \in \bbX_\blt.
\end{equation}
Similarly, taking $\btau=0$ in \eqref{combined_discrete}, we get the original discrete problem \eqref{discrete_2ndorder}. After obtaining $u_\bullet^c$, let $v=0$ in  \eqref{combined_discrete}, we get the discrete least-squares flux recovery problem \eqref{flux_recovery_discrete}. Thus, the combined discrete problem \eqref{combined_discrete} is equivalent to the two-step solve-recovery process, \eqref{discrete_2ndorder} and \eqref{flux_recovery_discrete}: both get the same discrete solutions $\bsigma_\bullet^{r}$ and $u_\bullet^c$. Thus, in stead of view the least-squares functional estimator as an a posteriori error estimator for the original conforming finite element approximation \eqref{discrete_2ndorder}, we  view it as a posteriori error estimator for the combined problem \eqref{combined_weak} and \eqref{combined_discrete}. 

As a standard step of numerical analysis, we discuss a priori and a posteriori error estimates for the combined problem.
The unique solvability of the combined problem is in fact the result of the well-posedness of two sub-problems. To fit our problem into the setting of the framework of plain convergence in \cite{Sie:11}, we prove the following theorem on the inf-sup stability for the bilinear form $\cA_{2s}$.
\begin{thm} \label{infsup-cm}
Assume that the mesh-size function $h_0$ of the initial mesh $\cT_0$ is smaller than the fixed mesh size $h_{\mathtt{fix}}$.
There exists a constant $\beta_{cb}>0$, 
which  is uniform with respect to the mesh-size but may depend on the fixed mesh size $h_{\mathtt{fix}}$ and the coefficients $A$, $\bb$, $c$ of the problem \eqref{pde1_m1}, such that 
\beq \label{infsup_cb}
\beta_{cb} \leq \inf_{(\brho_\bullet,w_\bullet)\in \bbX_\bullet }\sup_{(\btau_\bullet,v_\bullet)\in \bbX_\bullet} \dfrac{ \cA_{2s}((\brho_\bullet,w_\bullet),(\btau_\bullet,v_\bullet);\gamma)}{\tri(\brho_\bullet,w_\bullet)\tri \tri(\btau_\bullet,v_\bullet)\tri}
\quad \forall \cT_\bullet \in \bbT.
\eeq
\end{thm}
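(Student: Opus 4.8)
The plan is to exploit the block lower-triangular structure of $\cA_{2s}$. Testing against a pure $v$-direction returns $a(w,v)$, which only involves $w$, whereas testing against a pure $\btau$-direction returns the flux-recovery form coupled to $w$ through $\nabla w$ and through the lower-order part of $B(\brho,w;\gamma)$. The two ingredients I would combine are the discrete inf-sup stability \eqref{infsup_dis_2nd} of $a$ on $\bbV_\blt$ (available since $h_0<h_{\mathtt{fix}}$ by assumption) and the coercivity of the flux block, which follows from the norm equivalence \eqref{ls_equ_gamma} evaluated at $v=0$: because $B(\brho,0;\gamma)=-(1-\gamma)\bb\cdot A^{-1}\brho$, one has $\cA_{2s}((\brho,0),(\brho,0);\gamma)=\LS(\brho,0;0,\bzero,\gamma)\geq C_{ls,coe}\|\brho\|_{H(\divvr)}^2$.

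First, given any $(\brho_\blt,w_\blt)\in\bbX_\blt$, I would use \eqref{infsup_dis_2nd} together with the finite-dimensionality of $\bbV_\blt$ to select $v_\blt^1\in\bbV_\blt$ with $\|v_\blt^1\|_1=\|w_\blt\|_1$ and $a(w_\blt,v_\blt^1)\geq\beta_0\|w_\blt\|_1^2$. Then, for a parameter $\delta>0$ to be fixed, I would test against $(\delta\brho_\blt,\,v_\blt^1)$. Since $a(w,v)$ depends only on $v$ while the remaining two terms of $\cA_{2s}$ depend only on $\btau$ and are linear in it, the evaluation splits cleanly as
\[
\cA_{2s}((\brho_\blt,w_\blt),(\delta\brho_\blt,v_\blt^1);\gamma) = a(w_\blt,v_\blt^1) + \delta\,\cA_{2s}((\brho_\blt,w_\blt),(\brho_\blt,0);\gamma).
\]

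Next I would decompose the flux block. Writing $\gradt\brho_\blt+B(\brho_\blt,w_\blt;\gamma)=[\gradt\brho_\blt-(1-\gamma)\bb\cdot A^{-1}\brho_\blt]+[\gamma\bb\cdot\nabla w_\blt+cw_\blt]$, the term $\cA_{2s}((\brho_\blt,w_\blt),(\brho_\blt,0);\gamma)$ equals the coercive part $\LS(\brho_\blt,0;0,\bzero,\gamma)\geq C_{ls,coe}\|\brho_\blt\|_{H(\divvr)}^2$ plus the cross terms $(\nabla w_\blt,\brho_\blt)+(\gamma\bb\cdot\nabla w_\blt+cw_\blt,\,\gradt\brho_\blt-(1-\gamma)\bb\cdot A^{-1}\brho_\blt)$, which by the $L^\infty$ bounds on $A,\bb,c$ and Cauchy--Schwarz are bounded in modulus by $C\|w_\blt\|_1\|\brho_\blt\|_{H(\divvr)}$. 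Combining,
\[
\cA_{2s}((\brho_\blt,w_\blt),(\delta\brho_\blt,v_\blt^1);\gamma) \geq \beta_0\|w_\blt\|_1^2 + \delta\,C_{ls,coe}\|\brho_\blt\|_{H(\divvr)}^2 - \delta\,C\|w_\blt\|_1\|\brho_\blt\|_{H(\divvr)}.
\]
A Young inequality absorbs the cross term, and fixing $\delta=\min\{1,\beta_0 C_{ls,coe}/C^2\}$ yields a lower bound $c_0\tri(\brho_\blt,w_\blt)\tri^2$ with $c_0=\tfrac12\min\{\beta_0,\delta C_{ls,coe}\}>0$. Since $\tri(\delta\brho_\blt,v_\blt^1)\tri^2=\|w_\blt\|_1^2+\delta^2\|\brho_\blt\|_{H(\divvr)}^2\leq\tri(\brho_\blt,w_\blt)\tri^2$ for $\delta\leq1$, dividing by the product of norms gives the claim with $\beta_{cb}=c_0$, which is uniform in $\cT_\blt$ because $\beta_0$ and $C_{ls,coe}$ are.

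The main obstacle is that the flux equation is genuinely coupled to $w$: the form $\cA_{2s}((\brho,w),(\btau,0);\gamma)$ fails to be coercive in $\brho$ alone once $w\neq0$, so one cannot treat the two components as a decoupled system. The resolving device is the scaling $\delta$, which turns the flux block into a small, controllable perturbation of the $a$-stable $w$-block, letting the $w$-coercivity absorb the $O(\delta)$ cross terms. The delicate point requiring care is that the admissible range of $\delta$, and hence $\beta_{cb}$, must depend only on the fixed constants $\beta_0$ and $C_{ls,coe}$ (thus on $h_{\mathtt{fix}}$ and the coefficients, as claimed) and not on the particular mesh $\cT_\blt$.
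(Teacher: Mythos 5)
Your proof is correct, and it rests on exactly the same two ingredients as the paper's: the discrete inf-sup stability \eqref{infsup_dis_2nd} of $a(\cdot,\cdot)$ on $\bbV_\blt$ (available since $h_0<h_{\mathtt{fix}}$), and the coercivity of the flux block obtained from the norm equivalence \eqref{ls_equ_gamma} at $v=0$, combined with the observation that the coupling terms are bounded by $C\|w_\blt\|_1$ times the norm of the test pair. Where you genuinely differ is in the assembly. The paper never constructs a test function: it first tests with pairs $(\bzero,v_\blt)$ to obtain $\beta_0\|w_\blt\|_1\leq \sup_{(\btau_\blt,v_\blt)}\cA_{2s}((\brho_\blt,w_\blt),(\btau_\blt,v_\blt);\gamma)/\tri(\btau_\blt,v_\blt)\tri$; it then bounds $C\|\brho_\blt\|_{H(\divvr)}$ by the sup of the flux block $b_r(\brho_\blt,\btau_\blt;\gamma)/\|\btau_\blt\|_{H(\divvr)}$, rewrites $b_r$ as $\cA_{2s}$ minus the coupling terms and minus $a(w_\blt,v_\blt)$, bounds the subtracted terms by $C_1\|w_\blt\|_1\tri(\btau_\blt,v_\blt)\tri$, and finally adds the two component estimates, the already-controlled $\|w_\blt\|_1$ absorbing the leftover term. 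You instead build a single explicit test pair $(\delta\brho_\blt,v_\blt^1)$ and absorb the $O(\delta)$ cross terms by Young's inequality. Your route carries one extra technicality, handled correctly: you need a (near-)maximizer $v_\blt^1$ of the discrete sup, which you justify by finite-dimensionality of $\bbV_\blt$, whereas the paper's sup-splitting argument requires no attainment and would work verbatim in the infinite-dimensional setting. In exchange, your argument produces an explicit formula for $\beta_{cb}$ in terms of $\beta_0$, $C_{ls,coe}$, and the coupling constant, and makes the mesh-uniformity of the constant completely transparent. Both are standard ways of treating a block lower-triangular system, and both deliver the claimed uniformity in $\cT_\blt\in\bbT$.
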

\begin{proof}
From the definition of $\cA_{2s}$, it is easy to see that 
\beq \label{eqeq}
 \cA_{2s}((\brho,w),(\bzero,v);\gamma) = a(w,v) \quad \forall (\brho,w) \in \bbX, \,\, v\in H^1_0(\O).
\eeq
From \eqref{infsup_dis_2nd} and \eqref{eqeq}, we have 
\beq
\beta_{0} \|w_\bullet\|_1 \leq \sup_{v_\bullet\in\bbV_\blt} \dfrac{a(w_\bullet,v_\bullet)}{\|v_\bullet\|_1} = 
\sup_{(\bzero,v_\bullet)\in\bbX_\blt} \dfrac{\cA_{2s}((\brho_\bullet,w_\bullet),(\bzero,v_\bullet);\gamma)}{\tri(\bzero,v_\bullet)\tri} 
\leq 
\sup_{(\btau_\bullet,v_\bullet)\in\bbX_\blt} \dfrac{\cA_{2s}((\brho_\bullet,w_\bullet),(\btau_\bullet,v_\bullet);\gamma)}{\tri(\btau_\bullet,v_\bullet)\tri}. 
\eeq
By the definition of $H(\divvr)$-norm, definition of $\cA_{2s}$, the fact that 
$(\nabla w,\btau) + (\gamma\bb\cdot\nabla w+c w,\gradt \btau- (1-\gamma)\bb\cdot A^{-1}\btau) + a(w,v) \leq C_1 \|w\|_1 \tri (\btau,v)\tri$ for $w\in H^1_0(\O)$  and $(\btau,v)\in \bbX$, we have
\begin{eqnarray*}
&&C \|\brho_\bullet\|_{H(\divvr)} \leq \sup_{\btau_\bullet \in \bbW_\bullet} 
\dfrac{(A^{-1}\brho_\bullet,\btau_\bullet) + (\gradt\brho_\bullet- (1-\gamma)\bb\cdot A^{-1}\brho_\bullet,\gradt\btau_\bullet- (1-\gamma)\bb\cdot A^{-1}\btau_\bullet )}{\|\btau_\bullet\|_{H(\divvr)}} \\
&&=   \sup_{(\btau_\bullet, v_\bullet)\in \bbX_\bullet} \dfrac{(A^{-1}\brho_\bullet,\btau_\bullet) + (\gradt\brho_\bullet- (1-\gamma)\bb\cdot A^{-1}\brho_\bullet,\gradt\btau_\bullet- (1-\gamma)\bb\cdot A^{-1}\btau_\bullet )}{\tri(\btau_\bullet,v_\bullet)\tri} \\ 
&&=   \sup_{(\btau_\bullet,v_\bullet)\in \bbX_\bullet} \dfrac{ \cA_{2s}((\brho_\bullet,w_\bullet),(\btau_\bullet,v_\bullet);\gamma)-  (\nabla w_\bullet,\btau_\bullet) - (\gamma\bb\cdot\nabla w_\bullet+c w_\bullet,\gradt \btau_\bullet- (1-\gamma)\bb\cdot A^{-1}\btau_\bullet) - a(w_\bullet,v_\bullet)} {\tri(\btau_\bullet,v_\bullet)\tri} \\
&&\leq   \sup_{(\btau_\bullet,v_\bullet)\in \bbX_\bullet} \dfrac{ \cA_{2s}((\brho_\bullet,w_\bullet),(\btau_\bullet,v_\bullet);\gamma)}{\tri(\btau_\bullet,v_\bullet)\tri} + C_1\|w_\bullet\|_1.
\end{eqnarray*}

Combined the above two results, we have the theorem.
\end{proof}
With Theorem \ref{infsup-cm}, we immediately have the following a priori error estimate for the combined two-step problem \eqref{combined_discrete} under the assumption that the mesh-size function $h_0$ of the initial mesh $\cT_0$ is smaller than the fixed mesh size $h_{\mathtt{fix}}$:
\begin{eqnarray} \label{ap_combined}
\tri(\bsigma-\bsigma_\bullet^{r}, u-u_\bullet^c)\tri &\leq &  C\inf_{(\btau,v)\in \bbX_\blt}\tri(\bsigma-\btau, u-v)\tri.
\end{eqnarray}
From the reliability and efficiency of the general least-squares functional error estimator \eqref{ls_rel_eff1}-\eqref{ls_rel_eff2}, we have following bounds for the a posteriori error estimator,
\beq \label{ls_rel_eff_combined}
\sqrt{C_{ls,coe}} \tri(\bsigma-\bsigma_\bullet^{r},u-u_\bullet^c)\tri  \leq \eta^{ls}_\blt(\bsigma_\bullet^{r},u_\bullet^c) \leq \sqrt{C_{ls,con} } \tri(\bsigma-\bsigma_\bullet^{r},u-u_\bullet^c)\tri.
\eeq
Since the general  least-squares functional error estimator \eqref{ls_rel_eff1}-\eqref{ls_rel_eff2} actually does not require any approximation properties of the discrete solutions, it is important to realize that we need both the a priori and a posteriori error controls to have a good adaptive numerical approximation.
\begin{rem}
The a priori result \eqref{ap_combined} is weaker than the individual results \eqref{apriori_2nd} and \eqref{apriori_sigma} since the a priori estimate of $u_\blt^c$ is actually an independent result.
\end{rem}

\begin{rem} \label{remeff}
Proving the standard efficiency result $\eta^{ls}_\blt(\bsigma_\bullet^{r},u_\bullet^c) \leq \|u-u_\bullet^c\|_1$ is indeed possible, but it may require additional work. In previous works such as \cite{CZ:09, CZ:10a, CZ:12}, to show the efficiency for two other related flux-recovery a posteriori error estimators $\eta_{m}$ based on a minimization problem, an explicit estimator $\eta_e$ is constructed, which is bigger than the minimized version $\eta_{m}\leq \eta_e$. Then for the explicit version, we show its equivalence to the standard residual-type error estimator, which is known to be efficient. 

In principle, a similar procedure could be applied to the least-squares functional error estimator. However, this approach is more challenging and goes against the initial idea of applying the estimator for not-well-studied problems. Instead, the idea of regarding the least-squares functional error estimator as an estimator for the combined two-step problems is simpler and more straightforward. By treating the solving and recovery process as one problem, the reliability and efficiency of the least-squares functional error estimator can be proved without the need for the complicated explicit equivalence construction used in other cases. This approach aligns with the initial motivation of applying the estimator to less-explored problems.
\end{rem}

\section{Plain Convergence of Adaptive Algorithm for Linear Problem}

\setcounter{equation}{0}

In this section, we will prove the plain convergence of the adaptive methods with the non-intrusive least-squares functional a posteriori error estimator in the sprit of \cite{Sie:11}.
\subsection{Marking Strategy}
Here, we use the same assumption on the marking strategy used in Section 2.2.4 of \cite{Sie:11} and Section 2.6 of \cite{FP:20}. The solution $(\bsigma_\bullet^{r},u_\bullet^c)\in \bbX_\blt$ is the discrete solution of the two-step combined problem \eqref{combined_discrete}.

\noindent
{\bf Assumption (M)} There exists a fixed function $g: [0,\infty) \rightarrow [0,\infty)$ being continuous at $0$, such that the set of marked elements $\cM_\blt \subset \cT_\blt$ (corresponding to the current approximation $(\bsigma_\bullet^{r},u_\bullet^c)$) satisfies that 
$$
\max_{T\in\cT_\blt\backslash\cM_\blt} \eta_\blt^{ls}(T;\bsigma_\bullet^{r},u_\bullet^c) \leq  g(\max_{T\in\cM_\blt} \eta_\blt^{ls}(T;\bsigma_\bullet^{r},u_\bullet^c)).
$$
In Section 2.2.4 of \cite{Sie:11} and Section 2.6 of \cite{FP:20}, it was discussed that for commonly used marking strategies such as the maximum strategy, the equilibration strategy, and the D\"orfler marking strategy, the marking assumption (M) is always satisfied with $g(s)=s$ and $\cM_\blt$ contains at least one element with a maximal error indicator.


\subsection{Adaptive Algorithm with Non-intrusive Least-Squares Functional Estimator}
We use the following adaptive algorithm.

\begin{algorithm}[H]
 \KwIn{Initial triangulation $\cT_0$.}

 \For{$\ell = 0,1,2,\cdots$ }{
  (i) {\bf Solve}. Compute the discrete solution $u_{\ell}^c\in \bbV_\ell$ by solving \eqref{discrete_2ndorder}.
  
  (ii){\bf Least-Squares Recovery}. Recover $\bsigma_{\ell}^{r}\in \bbW_\ell$ by solving the least-squares recovery problem \eqref{flux_recovery_discrete}. 
    
  (iii){\bf Estimate.} Compute $\eta_{\ell}^{ls}(T;\bsigma^r_\ell,u^c_\ell)$ from \eqref{ap_indicator}, for all $T\in\cT_\ell$.
  
 (iv){\bf Mark.} Mark a set $\cM_\ell\subset \cT_\ell$ satisfying {\bf Assumption M}.
 
 (v){\bf Refine.} Let $\cT_{\ell+1}:= \mbox{refine}(\cT_\ell,\cM_\ell) $
 }
  \KwOut{Sequences of approximations $(\bsigma_{\ell}^{r}, u_{\ell}^c)$  and corresponding error estimators $\eta_{\ell}^{ls}(\bsigma^r_\ell, u^c_\ell)$.}
\caption{Adaptive Algorithm  with Non-intrusive Least-Squares Functional Estimator}
\end{algorithm}

\subsection{Proof of Plain Convergence} 

By the definition \eqref{ap_indicator} and the triangle inequality, it is easy to see that $\eta^{ls}_\blt(T;\bsigma_\bullet^{r},u_\bullet^c) $ satisfies the local bound,
\begin{equation} 
\eta^{ls}_\blt(T;\bsigma_\bullet^{r},u_\bullet^c) 
    \leq C(\tri (\bsigma_\bullet^{r},u_\bullet^c) \tri  + \| f_1\|_{0,T}+\| \bff_2\|_{0,T}) \quad \forall T\in\cT_\blt.
    \label{local_bdd}
\end{equation}
This is the condition in (2.10) of \cite{Sie:11}.

Given $(\brho,w) \in \bbX$, for all $(\btau,v) \in \bbX$, define the residual $\cR(\brho,w)\in \bbX^*$ by
$$
    \langle \cR(\brho,w),(\btau,v)\rangle =  G_{2s}(\btau,v;\gamma)  -  \cA_{2s}((\brho,w), (\btau,v);\gamma).
$$
\begin{lem} \label{lem_upp}
Given $(\brho,w) \in \bbX$, we have the following upper bound of the residual,
$$
\langle \cR(\brho, w), (\btau,v)\rangle \leq C\sum_{T\in\mathcal{T}_\bullet} \eta^{ls}_\blt (T;\brho,w) \tri (\btau,v)\tri_{T}   \quad \forall (\btau,v) \in \bbX.
$$
\end{lem}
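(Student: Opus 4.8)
The plan is to expand the residual $\langle \cR(\brho,w),(\btau,v)\rangle = G_{2s}(\btau,v;\gamma) - \cA_{2s}((\brho,w),(\btau,v);\gamma)$ directly using the explicit definitions of $G_{2s}$ and $\cA_{2s}$, and then recognize the resulting expression as a sum of inner products against precisely the two least-squares residual quantities that appear in $\eta^{ls}_\blt(T;\brho,w)$. First I would write out $G_{2s}(\btau,v;\gamma) - \cA_{2s}((\brho,w),(\btau,v);\gamma)$ term by term. Collecting the terms tested against $v$ and against $\btau$ separately, I expect the $v$-terms to assemble into $(\bff_2 - A\nabla w - \brho, \nabla v)$-type contributions (after using $a(w,v) = (A\nabla w,\nabla v)+(\bb\cdot\nabla w + cw,v)$ and integration by parts against $\gradt\brho$), and the $\btau$-terms to assemble into the $L^2$-inner products of $\btau$ and $\gradt\btau - (1-\gamma)\bb\cdot A^{-1}\btau$ against the two first-order system residuals
$$
\br_1 := A^{-1}\bff_2 - A^{-1}\brho - \nabla w \quad\text{and}\quad \br_2 := g(\gamma) - \gradt\brho - B(\brho,w;\gamma).
$$

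The key algebraic step is to verify that, after cancellation, the residual reduces exactly to
$$
\langle \cR(\brho,w),(\btau,v)\rangle = (A^{1/2}\br_1,\, A^{-1/2}\btau + A^{1/2}\nabla v) + (\br_2,\, \gradt\btau + B(\btau,v;\gamma)),
$$
or an equivalent grouping in which each factor matches one of the two square-root summands defining $\eta^{ls}_\blt(T;\brho,w)$ in \eqref{ap_indicator} (the weighted flux residual $A^{-1/2}\br$-piece and the equation residual $\br_2$-piece). This identity is the heart of the lemma: once it holds, the rest is Cauchy--Schwarz applied elementwise. Concretely, I would restrict every integral to an element $T$, apply the Cauchy--Schwarz inequality on $T$ to each of the two products, and bound the $(\btau,v)$-factors by $\tri(\btau,v)\tri_T$ while the residual-factors are exactly $\eta^{ls}_\blt(T;\brho,w)$ (up to the bounded constants coming from $A$, $\bb$, $c$). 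Summing over $T\in\cT_\blt$ then yields the claimed bound with a constant $C$ depending only on $\Lambda_0,\Lambda_1$ and $\|\bb\|_\infty,\|c\|_\infty$.

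The main obstacle I anticipate is the bookkeeping around the $B(\btau,v;\gamma)$ and $(1-\gamma)\bb\cdot A^{-1}\btau$ terms: the definition of $\cA_{2s}$ tests $\gradt\brho + B(\brho,w;\gamma)$ against $\gradt\btau - (1-\gamma)\bb\cdot A^{-1}\btau$ rather than against $\gradt\btau + B(\btau,v;\gamma)$, so the identity linking $\cR$ to the functional residuals in $\eta^{ls}_\blt$ is not a verbatim match and requires care. I would handle this by using the integration-by-parts identity $(\btau,\nabla v)+(\gradt\btau,v)=0$ for $(\btau,v)\in\bbX$ (as in \eqref{vww}) to move the $\nabla v$ against $\brho$ back onto $\gradt\btau$ against $w$, thereby converting the asymmetric test function into the symmetric form $\gradt\btau + B(\btau,v;\gamma)$ that appears in the estimator. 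The remaining subtlety is that $B(\btau,v;\gamma)$ still contains a $\nabla v$ contribution (through $\gamma\bb\cdot\nabla v$), but this is controlled since $\|B(\btau,v;\gamma)\|_{0,T}\le C(\|\btau\|_{0,T}+\|v\|_{1,T})\le C\tri(\btau,v)\tri_T$, which is already established in the norm-equivalence proof; hence no new estimate is needed and the constant $C$ absorbs these bounded coefficients.
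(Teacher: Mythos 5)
Your proposal is correct and follows essentially the same route as the paper's proof: expand $\cR$ from the definitions of $G_{2s}$ and $\cA_{2s}$, regroup the terms (via the integration by parts $(\brho,\nabla v)+(\gradt\brho,v)=0$) into $L^2$-pairings of the two first-order-system residuals against quantities built from $(\btau,v)$, and finish with element-wise Cauchy--Schwarz; the paper packages the identical computation through the exact solution $(\bsigma,u)$ and the identity \eqref{vww}, which is the same integration by parts.

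One correction is worth recording: the exact identity you call the heart of the lemma does not hold as written, and no amount of integration by parts produces the ``symmetric'' factor $\gradt\btau+B(\btau,v;\gamma)$. Setting $R_1:=A^{-1}\bff_2-A^{-1}\brho-\nabla w$ and $R_2:=g(\gamma)-\gradt\brho-B(\brho,w;\gamma)$, the algebra actually yields
$$
\langle\cR(\brho,w),(\btau,v)\rangle=\bigl(R_1,\;\btau+A\nabla v+(1-\gamma)\bb v\bigr)+\bigl(R_2,\;v+\gradt\btau-(1-\gamma)\bb\cdot A^{-1}\btau\bigr),
$$
so the $(\btau,v)$-factors carry zeroth-order terms $v$ and $(1-\gamma)\bb v$ rather than matching the least-squares test functions $A^{-1/2}\btau+A^{1/2}\nabla v$ and $\gradt\btau+B(\btau,v;\gamma)$. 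This discrepancy is harmless, exactly as your fallback anticipates: $\|A^{1/2}R_1\|_{0,T}$ and $\|R_2\|_{0,T}$ are precisely the two summands of $\eta^{ls}_\blt(T;\brho,w)$, every $(\btau,v)$-factor above is bounded on each $T$ by $C\tri(\btau,v)\tri_T$ with $C$ depending only on $A$, $\bb$, $c$, and element-wise Cauchy--Schwarz followed by summation over $T\in\cT_\blt$ gives the claimed bound.
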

\begin{proof}
By the definition of $\cR$, we have 
\begin{equation} \label{RR}
    \langle \cR(\brho, w), (\btau,v)\rangle 
     = a(w-u,v) + (A^{-1}\bff_2 -A^{-1}\brho-\nabla w,\btau) + (g(\gamma) -B(\brho,w;\gamma)-\gradt \brho,\gradt\btau- (1-\gamma)\bb\cdot A^{-1}\btau).
\end{equation}
It is easy to see that the last two terms on the righthand side of \eqref{RR} are bounded by 
\begin{eqnarray*}
(A^{-1}\bff_2 -A^{-1}\brho-\nabla w,\btau) + (g(\gamma) -B(\brho,w;\gamma)-\gradt \brho,\gradt\btau- (1-\gamma)\bb\cdot A^{-1}\btau)
\leq 
C\sum_{T\in\mathcal{T}_\bullet} \eta^{ls}_\blt (T;\brho, w)\| \btau\|_{H(\divvr;T)}
\end{eqnarray*}
For the term $a(w,v)$, using \eqref{vww} and the facts that $\bsigma= -A\nabla u+\bff_2$ and $\gradt\bsigma  = f_1 - \bb\cdot \nabla u - c u$, we have
\begin{eqnarray*} \nonumber
a(w-u,v) &=&  (\nabla (w-u)+A^{-1}(\brho-\bsigma), A\nabla v+(1-\gamma)\bb v)+(\gradt(\brho-\bsigma)+ B(\brho-\bsigma,w-u;\gamma),v) \\
&=&(A\nabla w + \brho-\bff_2, \nabla v+A^{-1}(1-\gamma)\bb v) +  (\gradt\brho+B(\brho,w;\gamma)-g(\gamma),v)\\
&\leq& 
C\sum_{T\in\mathcal{T}_\bullet} \eta^{ls}_\blt (T;\brho,w) \| v\|_{1,T}.
\end{eqnarray*}
Combining above results, we have the lemma.
\end{proof}
For the solution $(u_\bullet^c, \bsigma_\bullet^{r})$ of the combined problem \eqref{combined_discrete}, we have the following upper bound of the residual from Lemma \ref{lem_upp},
\begin{equation}
   \langle \cR(u_\bullet^c, \bsigma_\bullet^{r}) ,(\btau,v)\rangle \leq C\sum_{T\in\mathcal{T}_\bullet} \eta^{ls}_\blt (T;\bsigma_\bullet^{r},u_\bullet^c ) \tri (\btau,v)\tri_{T}   \quad \forall (\btau,v) \in \bbX.
    \label{upper}
\end{equation}
The result \eqref{upper} is the result in (2.10a) of \cite{Sie:11}.
\begin{rem}
The proof of the above lemma is very similar to the proof of the coercivity of least-squares formulation since essentially it is a middle-step result of the reliability of the least-squares functional error estimator. Like-wisely, we do not use any information that $(\brho, w)$ being the numerical solution of the combined discrete problem in the proof. 
\end{rem}
Then, we are in the position to prove the plain convergence results.
\begin{thm}
Suppose that the marking strategy and the mesh-refinement in Algorithm 1 satisfy {\bf Assumptions (M) and (R1), (R2), (R3)}, then the sequence of approximations $(u_{\ell}^c,\bsigma_{\ell}^r)$ generated by Algorithm 1 satisfies
\begin{equation}
\lim_{\ell\rightarrow \infty}\tri (\bsigma,u) - (\bsigma_\ell^{r},u_\ell^c)\tri = 0 \quad \mbox{and}\quad
            \lim_{\ell\rightarrow \infty} \eta_\ell^{ls}(\bsigma_\ell^{r},u_\ell^{c}) = 0.
\end{equation}
\end{thm}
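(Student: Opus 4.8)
The plan is to verify that the discrete combined problem \eqref{combined_discrete}, together with Algorithm 1, meets the hypotheses of Siebert's abstract plain-convergence framework \cite{Sie:11}, which have in effect been assembled already: additivity and absolute continuity of $\tri\cdot\tri$ (Proposition \ref{A3A4}); conformity, nestedness and finite-dimensionality of $\bbX_\ell$ (Proposition \ref{S1S2}); the local approximation property \eqref{localapp} on the dense subspace $H^2(\O)^d\times(H^2(\O)\cap H^1_0(\O))$; the local boundedness \eqref{local_bdd} of the indicators; the residual upper bound \eqref{upper}; the Galerkin orthogonality $\langle\cR(\bsigma_\ell^r,u_\ell^c),(\btau_\ell,v_\ell)\rangle=0$ for all $(\btau_\ell,v_\ell)\in\bbX_\ell$ inherited from \eqref{combined_discrete}; the uniform inf--sup stability of $\cA_{2s}$ (Theorem \ref{infsup-cm}); and the two-sided estimate \eqref{ls_rel_eff_combined}. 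Since \eqref{ls_rel_eff_combined} makes the two asserted limits equivalent, it suffices to prove $\tri(\bsigma,u)-(\bsigma_\ell^r,u_\ell^c)\tri\to 0$, and I would do so by first producing an a priori limit and then identifying it with the exact solution.

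For the a priori limit, set $\bbX_\infty$ to be the $\tri\cdot\tri$-closure of $\bigcup_{\ell\ge 0}\bbX_\ell$, a closed subspace of $\bbX$. The uniform inf--sup constant $\beta_{cb}$ of Theorem \ref{infsup-cm} passes to $\bbX_\infty$, so $\cA_{2s}$ has a unique solution $(\bsigma_\infty,u_\infty)\in\bbX_\infty$. Because the meshes are nested, $(\bsigma_\ell^r,u_\ell^c)$ is the $\cA_{2s}$-Galerkin projection of $(\bsigma_\infty,u_\infty)$ onto $\bbX_\ell$, and the C\'ea-type quasi-optimality underlying \eqref{ap_combined} together with the density of $\bigcup_\ell\bbX_\ell$ in $\bbX_\infty$ gives $\tri(\bsigma_\infty,u_\infty)-(\bsigma_\ell^r,u_\ell^c)\tri\to 0$. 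In particular the discrete solutions converge strongly in $\bbX$, so the two $L^2$ quantities entering the indicator \eqref{ap_indicator} converge in $L^2(\O)$. It then remains to show $(\bsigma_\infty,u_\infty)=(\bsigma,u)$, i.e. that $\cR(\bsigma_\infty,u_\infty)=0$ on $\bbX$.

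To show the residual vanishes I would fix $(\btau,v)$ in the dense subspace $H^2(\O)^d\times(H^2(\O)\cap H^1_0(\O))$ and combine Galerkin orthogonality with the interpolant $I_\ell(\btau,v)\in\bbX_\ell$, \eqref{upper} and \eqref{localapp}, to obtain
\[
|\langle\cR(\bsigma_\ell^r,u_\ell^c),(\btau,v)\rangle|
= |\langle\cR(\bsigma_\ell^r,u_\ell^c),(\btau,v)-I_\ell(\btau,v)\rangle|
\le C\sum_{T\in\cT_\ell}\eta_\ell^{ls}(T;\bsigma_\ell^r,u_\ell^c)\,\tri(\btau,v)-I_\ell(\btau,v)\tri_T .
\]
I would then split $\cT_\ell$ into the elements that are eventually never refined and those refined infinitely often (so that $h_\ell\to h_\infty$ a.e. with $h_\infty=0$ on the latter). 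On the refined part the total indicator $\eta_\ell^{ls}$ stays bounded (by \eqref{local_bdd} and the convergence of the discrete solutions) while the interpolation factor is driven to zero through $h_T\to 0$ and \eqref{localapp}, so a local Cauchy--Schwarz estimate and dominated convergence (using the absolute continuity from Proposition \ref{A3A4}) send this part to $0$. On the stabilized part the interpolation factor is eventually frozen and bounded, while the indicators there tend to zero: such elements are eventually never marked, so by Assumption (M) they satisfy $\eta_\ell^{ls}(T;\cdot)\le g\big(\max_{T\in\cM_\ell}\eta_\ell^{ls}(T;\cdot)\big)$, and since $g$ is continuous at $0$ with $g(0)=0$ (as for the standard strategies, where $g(s)=s$) it suffices to drive the maximal marked indicator to zero. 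Passing $\ell\to\infty$ and using the continuity of $\cR$ in its argument then yields $\langle\cR(\bsigma_\infty,u_\infty),(\btau,v)\rangle=0$ on a dense set, hence on all of $\bbX$; thus $(\bsigma_\infty,u_\infty)=(\bsigma,u)$, and \eqref{ls_rel_eff_combined} delivers both limits.

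The main obstacle is the claim $\max_{T\in\cM_\ell}\eta_\ell^{ls}(T;\bsigma_\ell^r,u_\ell^c)\to 0$. The subtlety is that marking only guarantees one bisection of an element, so its measure need not shrink by marking alone; instead I would first invoke the purely mesh-theoretic fact (consequences of \textbf{Assumptions R1, R2, R3}) that $\max_{T\in\cM_\ell}h_T\to 0$, since newest-vertex bisection never enlarges elements and only finitely many refinements of elements above any fixed size are possible on a domain of finite measure. Combined with $\mbox{diam}(T)^d\le\kappa|T|$, this forces $\sup_{T\in\cM_\ell}|T|\to 0$; the uniform integrability of the fixed $L^2(\O)$-limits of the indicator integrands (absolute continuity, Proposition \ref{A3A4}) together with their strong $L^2$-convergence then gives $\max_{T\in\cM_\ell}\eta_\ell^{ls}(T;\cdot)\to 0$. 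This is the one place where the absence of mesh-size weights in the least-squares functional estimator prevents the contraction arguments of \cite{Dof:96,MNS:02,BDD:04,CFPP:14} and forces the Siebert-type compactness reasoning instead.
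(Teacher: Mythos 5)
Your first paragraph is, in substance, the paper's entire proof: the paper checks exactly the conditions you list --- Propositions \ref{A3A4} and \ref{S1S2}, the local approximation property \eqref{localapp}, the local boundedness \eqref{local_bdd}, the residual bound \eqref{upper}, and the uniform inf-sup stability of Theorem \ref{infsup-cm} --- and then invokes Theorem 2.1 of \cite{Sie:11} as a black box, deducing the estimator limit from \eqref{ls_rel_eff_combined}. Had you stopped there and cited that theorem, your proof would coincide with the paper's. Instead you try to re-derive Siebert's theorem in this concrete setting, and the re-derivation has one genuine gap.

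The gap is in the never-refined region. You split $\cT_\ell$ at the single index $\ell$ into the set $\cT_\ell^+$ of elements never refined afterwards and its complement, and you claim that on $\cT_\ell^+$ it suffices to know that the indicators tend to zero (which you correctly get from Assumption (M), $g(0)=0$, and $\max_{T\in\cM_\ell}\eta^{ls}_\ell(T;\bsigma_\ell^{r},u_\ell^{c})\to 0$). This does not close the required estimate $\sum_{T\in\cT_\ell^+}\eta^{ls}_\ell(T;\bsigma_\ell^{r},u_\ell^{c})\,\tri (\btau,v)-I_\ell(\btau,v)\tri_T\to 0$: the cardinality of $\cT_\ell^+$ grows without bound, the interpolation weights on never-refined elements are frozen (they do not decay), and over $\bigcup_\ell\cT_\ell^+$ they are only square-summable, not summable. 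Consequently, bounding by $\max_{T\in\cT_\ell^+}\eta^{ls}_\ell(T;\cdot)\cdot\sum_{T\in\cT_\ell^+}\tri (\btau,v)-I_\ell(\btau,v)\tri_T$ fails because the last sum may diverge, and Cauchy--Schwarz against the globally bounded estimator fails because an $\ell^2$-mass of order one can be spread over arbitrarily many uniformly small indicators. The missing idea is the two-index (diagonal) argument of Proposition 3.7 of \cite{Sie:11}: fix $m$, split $\cT_\ell$ for $\ell\ge m$ with respect to the \emph{finite, fixed} set $\cT_m^+$; on $\cT_m^+$ the finitely many indicators tend to zero as $\ell\to\infty$ by (M), on $\cT_\ell\setminus\cT_m^+$ your dominated-convergence/absolute-continuity argument applies, and only then let $m\to\infty$. (Equivalently, truncate the square-summable family of frozen weights to a finite set up to an $\varepsilon$-tail.) Apart from this, your reconstruction is sound, and one ingredient is actually nicer than the reference the paper leans on: your combinatorial proof that $\max_{T\in\cM_\ell}|T|\to 0$ --- only finitely many admissible bisection-generated elements exceed any fixed measure, and by Assumption (R3) each can be marked at most once before disappearing forever --- is a clean alternative to the $L^p$-argument of Lemma 3.6 of \cite{Sie:11}, which the paper cites again in its nonlinear section.
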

\begin{proof}
We only need to check the assumptions of Theorem 2.1 of \cite{Sie:11}. We use the same marking strategy and the mesh-refinement strategy as in \cite{Sie:11} and \cite{FP:20}. The norm on the space $\bbX$ is addictive and absolutely continuous, see Proposition \ref{A3A4}, which is the Assumptions (A3) and (A4) of \cite{FP:20} and (2.3) of \cite{Sie:11}. The discrete space satisfies Proposition \ref{S1S2}, which coincides with Assumptions (S1) and (S2) of \cite{FP:20} and  (3.5) of \cite{Sie:11}. It also has the local approximation property \eqref{localapp} on the dense subspace of $\bbX$, which is  the assumption (S3) of \cite{FP:20} and (2.5c) of \cite{Sie:11}. The bilinear form $\cA_{2s}$ is uniform inf-sup stable from Theorem \ref{infsup-cm}, which is the condition (2.6) of  \cite{Sie:11}. Together with \eqref{local_bdd} which is (2.10) of \cite{Sie:11} (or the local boundness condition (L) in \cite{FP:20} for the least-squares FEM). The result \eqref{upper} is the assumption (2.10a) of \cite{Sie:11}. Thus, we can apply Theorem 2.1 of \cite{Sie:11} and show that $\lim_{\ell\rightarrow \infty}\tri (\bsigma,u) - (\bsigma_\ell^{r},u_\ell^c)\tri = 0$. From the reliability and efficiency of the error estimator \eqref{ls_rel_eff_combined}, we have $\lim_{\ell\rightarrow \infty} \eta_\ell^{ls}(\bsigma_\ell^{r},u_\ell^{c}) = 0$.
\end{proof}


\section{A simple monotone nonlinear problem}
\setcounter{equation}{0}
In the following several sections, we extend the idea of non-intrusive least-squares functional error estimator and its plain convergence analysis to a monotone nonlinear problem. We use the same notations introduced in Section 2. We first discuss the model monotone problem and its conforming finite element approximation.  

Consider the following monotone nonlinear elliptic problem,
\begin{equation}
\label{eq_non}
    \left\{\begin{array}{rllll}
        -\Delta u^{nl} + u^{nl}+(u^{nl})^3 &=& f ,\quad &\mbox{in} \ \Omega\\
        u^{nl} &=& 0,  &\mbox{on}\ \partial \Omega.
    \end{array}
    \right.
\end{equation}
Here, for simplicity, we assume that the righthand side $f\in L^2(\O)$. For the case that $f\in H^{-1}(\O)$, it can be treated as the linear case.

For nonlinear problems, the genetic constants may depend on various functions, for example, $f$, the solution $u$, or its approximation. In the rest of the paper, we use the notation $c_f$ to denote a genetic positive constant that may depend on the known right-hand side $f$, but not on the unknown $u$ or its approximation. 

The corresponding weak problem is to find $u^{nl}\in H^1_0(\Omega)$, such that
\begin{equation}
    a_{nl}(u^{nl},v)  = (f,v) \quad \forall v\in H^1_0(\Omega).
    \label{NonlinearCon}
\end{equation}
where 
$$
a_{nl}(w,v): = (\nabla w,\nabla v) + (w,v) +(w^3,v) \quad  \forall w,v\in H^1_0(\Omega).
$$
Note that $a_{nl}(w,v)$ is linear for the second argument.

We can also define the associated nonlinear operator $\mathcal{Z}: w\in H^1_0(\Omega) \rightarrow H^{-1}(\Omega)$ by
$        \langle \mathcal{Z}(w) , v\rangle := a_{nl}(w,v)$, for all $v\in H^1_0(\Omega)$. 
It is easy to see that $\cZ(0)=0$.

By the Sobolev embedding theorem, for a two-dimensional Lipschitz domain $\O$,  $H^1_0(\Omega)\hookrightarrow\hookrightarrow L^p(\Omega)$ $1\leq p<\infty$; for a three-dimensional Lipschitz domain $\O$,  $H^1_0(\Omega)\hookrightarrow L^p(\Omega)$ $1\leq p\leq 6$. Thus, for a $w \in H^1_0(\Omega)$, $w^3 \in L^2(\O)$, and the term $(w^3,v)$ is well-defined for functions $w$ and $v$ in $H^1_0(\Omega)$.

Using the $C^0$-conforming finite element space associated with $\cT_\blt$ defined in \eqref{C0FE}, the corresponding discrete problem is to find $u_\blt^{nl} \in \bbV_\blt$, such that
\begin{equation}
    a_{nl}(u_\blt^{nl},v_\blt) = (f,v_\blt)\quad \forall v_\blt \in \bbV_\blt.
    \label{NonlinearDis}
\end{equation}
\subsection{Some basic analysis on the equation and its finite element approximation}
Due to the fact that 
$(w^3-v^3,w-v)\geq 0$, for all $w,v\in H^1_0(\Omega)$,
the form $a_{nl}(w,v)$ (or $\cZ$) is {strongly monotone} (thus also strictly monotone), that is, 
\begin{equation}
\langle \cZ(w)-\cZ(v), w-v\rangle = a_{nl}(w,w-v) - a_{nl}(v,w-v) \geq  \| w-v\|_1^2 
\quad\forall w,v\in H^1_0(\Omega).
    \label{monotonicity}
\end{equation}
Let $v=0$, we immediately get that 
\beq \label{coercivity_nonlinear}
\langle \cZ (w), w\rangle = a_{nl}(w,w) \geq  \| w\|_1^2  \quad\forall w\in H^1_0(\Omega).
\eeq
Thus, $\cZ$ is coercive in the sense that
$$
\dfrac{\langle \cZ(w),w\rangle}{\|w\|_1} \rightarrow \infty \quad \mbox{as } \|w\|_1 \rightarrow \infty.
$$
For all $z, w, v\in H^1_0(\Omega)$, with Cauchy-Schwarz inequality, generalized H\"{o}lder inequality, and Sobolev embedding theorem, we have
\begin{equation*}
    \begin{aligned}
        |a_{nl}(z,v) - a_{nl}(w,v)| &= |(\nabla z -\nabla w,\nabla v) +(z -w,v) + (z^3 - w^3,v)|\\
        &\leq \|z - w\|_1 \|v\|_1 + |((z^2 + w^2 +zw)(z-w),v)|\\
        &\leq \|z-w\|_1\| v\|_1 + \| z^2+w^2+zw\|_{0,3}\| z-w\|_{0}\| v\|_{0,6}\\
        &\leq \|z-w\|_{1}\| v\|_{1} + C(\|z\|_1^2+\| w\|_1^2)\| z-w\|_1 \| v\|_1\\
        &\leq C(1+\| z\|_1^2+\| w\|_1^2)\| z-w\|_1 \| v\|_{1}.
    \end{aligned}
\end{equation*}
Thus, the mapping $\cZ$ is Lipschitz-continuous for bounded arguments of $z$ and $w$,
\beq \label{Lip-non}
  |\langle \cZ(z)-\cZ(w), v\rangle| =|a_{nl}(z,v) - a_{nl}(w,v)| \leq C(1+\| z\|_1^2+\| w\|_1^2)\| z-w\|_1 \| v\|_{1}
  \quad \forall z, w, v\in H^1_0(\Omega).
\eeq
Let $z=0$, we get
$
|a_{nl}(w,v)| \leq C(1+\| w\|_1^2)\|w\|_1 \| v\|_{1}$, for $w, v\in H^1_0(\Omega)$.

Consider the map 
$$
 t \in \mathbb{R} \rightarrow \langle \cZ(z+tw),v\rangle\in \mathbb{R}\quad \forall z,v,w\in H^1_0(\Omega),
$$ 
It is clear the map is continuous with respect to $t$, thus the operator $\cZ$ is hemicontinuous.

\begin{thm}
The weak problem (\ref{NonlinearCon}) has a unique solution $u^{nl}\in H^1_0(\Omega)$ and the discrete problem (\ref{NonlinearDis}) has a unique solution $u_\blt^{nl}\in \bbV_\blt$. Both solutions have the upper bounds 
\beq \label{nonlinear_stability}
       \| u^{nl} \|_{1} \leq \|f\|_0 \quad \mbox{and}\quad \| u_\blt^{nl} \|_{1} \leq  \|f\|_0. 
\eeq
The following a priori error estimate is true,  
\beq \label{apriori_non}
        \| u^{nl} - u_\blt^{nl} \|_1 \leq c_f \inf_{v_\blt\in \bbV_\blt} \| u^{nl}-v_\blt\|_1.
\eeq
\end{thm}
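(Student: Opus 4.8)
The plan is to establish three things in sequence: existence and uniqueness of both the continuous and discrete solutions, the stability bounds \eqref{nonlinear_stability}, and finally the quasi-optimal a priori error estimate \eqref{apriori_non}. The existence and uniqueness follow from the standard theory of monotone operators (the Browder--Minty theorem). We have already verified in the preceding discussion that $\cZ$ is \emph{strongly monotone} \eqref{monotonicity}, \emph{coercive} \eqref{coercivity_nonlinear}, \emph{hemicontinuous}, and bounded/Lipschitz on bounded sets \eqref{Lip-non}. These are exactly the hypotheses needed to conclude that $\cZ: H^1_0(\O)\to H^{-1}(\O)$ is a bijection, giving a unique $u^{nl}$ solving \eqref{NonlinearCon}. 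The identical argument applies verbatim in the finite-dimensional subspace $\bbV_\blt$, since all four structural properties are inherited by the restriction of $\cZ$ to the conforming subspace; this yields the unique discrete solution $u_\blt^{nl}$.

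\textbf{For the stability bounds}, I would test the equations with the solutions themselves. Taking $v = u^{nl}$ in \eqref{NonlinearCon} and using the coercivity \eqref{coercivity_nonlinear} gives $\|u^{nl}\|_1^2 \leq a_{nl}(u^{nl},u^{nl}) = (f,u^{nl}) \leq \|f\|_0\,\|u^{nl}\|_0 \leq \|f\|_0\,\|u^{nl}\|_1$, and dividing through yields $\|u^{nl}\|_1 \leq \|f\|_0$. (Here I am using $\|u^{nl}\|_0 \leq \|u^{nl}\|_1$, which is immediate from the definition of the $H^1$-norm, so no Poincar\'e constant is even needed given the $+u^{nl}$ zeroth-order term.) The discrete bound follows identically by testing \eqref{NonlinearDis} with $v_\blt = u_\blt^{nl}\in\bbV_\blt$.

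\textbf{For the a priori error estimate}, the key is to combine strong monotonicity below with Lipschitz continuity above, exploiting Galerkin orthogonality. Let $v_\blt\in\bbV_\blt$ be arbitrary. Using \eqref{monotonicity} with $w=u_\blt^{nl}$ and $v=v_\blt$, then inserting the Galerkin relation $\langle \cZ(u^{nl}) - \cZ(u_\blt^{nl}), w_\blt\rangle = 0$ for all $w_\blt\in\bbV_\blt$ (valid since $u_\blt^{nl}-v_\blt\in\bbV_\blt$), I would write
\begin{equation*}
\|u_\blt^{nl}-v_\blt\|_1^2 \leq \langle \cZ(u_\blt^{nl})-\cZ(v_\blt), u_\blt^{nl}-v_\blt\rangle = \langle \cZ(u^{nl})-\cZ(v_\blt), u_\blt^{nl}-v_\blt\rangle.
\end{equation*}
Now bound the right side by the Lipschitz estimate \eqref{Lip-non}, picking up the factor $C(1+\|u^{nl}\|_1^2 + \|v_\blt\|_1^2)$ times $\|u^{nl}-v_\blt\|_1\,\|u_\blt^{nl}-v_\blt\|_1$; cancelling one power of $\|u_\blt^{nl}-v_\blt\|_1$ gives $\|u_\blt^{nl}-v_\blt\|_1 \leq C(1+\|u^{nl}\|_1^2+\|v_\blt\|_1^2)\,\|u^{nl}-v_\blt\|_1$. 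A triangle inequality $\|u^{nl}-u_\blt^{nl}\|_1 \leq \|u^{nl}-v_\blt\|_1 + \|u_\blt^{nl}-v_\blt\|_1$ and taking the infimum over $v_\blt$ then produces \eqref{apriori_non}.

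\textbf{The main obstacle} is controlling the $v_\blt$-dependent constant so that it becomes the genuinely $u$-independent constant $c_f$ appearing in \eqref{apriori_non}. The cleanest route is to restrict the infimum to a neighborhood, or rather to observe that the quasi-optimal bound only needs to hold for $v_\blt$ realizing (near-)best approximation, for which $\|v_\blt\|_1$ is bounded in terms of $\|u^{nl}\|_1$, and hence in terms of $\|f\|_0$ via \eqref{nonlinear_stability}; combined with the stability bound on $\|u^{nl}\|_1$, the whole prefactor collapses to a constant $c_f$ depending only on $f$. I would make this explicit by choosing $v_\blt$ to be, say, the interpolant or the best approximation, noting $\|v_\blt\|_1 \leq \|u^{nl}\|_1 + \|u^{nl}-v_\blt\|_1 \leq \|u^{nl}\|_1 + \|u^{nl}\|_1$ for any reasonable approximation, so that $1+\|u^{nl}\|_1^2+\|v_\blt\|_1^2 \leq c_f$. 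This is the one place where the nonlinearity genuinely complicates the otherwise standard C\'ea-type argument, and it is where the convention introduced in the paragraph before the theorem—that $c_f$ may depend on $f$ but not on $u$—is essential.
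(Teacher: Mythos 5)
Your proof is correct, and the first two parts (Minty--Browder for existence/uniqueness on both $H^1_0(\O)$ and $\bbV_\blt$, and testing with the solution to get \eqref{nonlinear_stability} from coercivity) coincide with the paper's argument. For the error estimate \eqref{apriori_non}, however, you take a genuinely different route, and the comparison is instructive. The paper applies the strong monotonicity \eqref{monotonicity} directly to the pair $(u^{nl}, u_\blt^{nl})$, i.e.\ to the error itself, and then uses the error equation \eqref{error_non} to replace the \emph{test function} $u^{nl}-u_\blt^{nl}$ by $u^{nl}-v_\blt$:
\begin{equation*}
\|u^{nl} - u_\blt^{nl}\|_1^2 \leq a_{nl}(u^{nl},u^{nl} - u_\blt^{nl}) - a_{nl}(u_\blt^{nl},u^{nl} - u_\blt^{nl}) = a_{nl}(u^{nl},u^{nl} - v_\blt) - a_{nl}(u_\blt^{nl},u^{nl} - v_\blt).
\end{equation*}
The Lipschitz bound \eqref{Lip-non} then produces the factor $C(1+\|u^{nl}\|_1^2+\|u_\blt^{nl}\|_1^2)$, which involves only the two solutions and is immediately bounded by $C(1+2\|f\|_0^2)=c_f$ via \eqref{nonlinear_stability}; after cancelling one power of $\|u^{nl}-u_\blt^{nl}\|_1$, the resulting inequality holds uniformly for \emph{every} $v_\blt\in\bbV_\blt$, so the infimum is taken with no further discussion. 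You instead apply monotonicity to the discrete pair $(u_\blt^{nl},v_\blt)$ --- the classical C\'ea-style decomposition from linear theory --- which forces a triangle inequality and, more importantly, leaves the Lipschitz constant depending on $\|v_\blt\|_1$. This is exactly the obstacle you flag, and your resolution (restrict to a near-best approximant, observe $\|v_\blt\|_1\leq 2\|u^{nl}\|_1\leq 2\|f\|_0$ since $0\in\bbV_\blt$, and note that proving the bound for the best approximant suffices to obtain the infimum form) is legitimate; it just costs an extra step and a mild existence argument for the best approximation (available here since $\bbV_\blt$ is a closed subspace of a Hilbert space). In short, the paper's choice of where to insert Galerkin orthogonality makes the constant $c_f$ automatic and dissolves your ``main obstacle'' entirely, while your version shows the estimate can also be reached by the textbook discrete-difference route at the price of the near-best-approximation restriction.
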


\begin{proof}
The operator $\cZ$ is hemicontinuous, coercive and strictly monotone on $H^1_0(\O)$. It is also easy to see that $\cZ$ is hemicontinuous, coercive, and strictly monotone on the subspace $\bbV_\blt\subset H^1_0(\O)$. By the Minty-Brower Theorem (Theorem 9.14-1 of \cite{Ciarlet:13} or Theorem 2.K of \cite{Zeidler:AMS109}), both \eqref{NonlinearCon} and \eqref{NonlinearDis} have a unique solution.

The stability results \eqref{nonlinear_stability} are simple consequence of the coercivity \eqref{coercivity_nonlinear}. Note that we can choose the bound to be $\|f\|_{-1}$ when necessary.

It is easy to check that the following error equation is true,
\beq 
\label{error_non}
a_{nl}(u^{nl},v_\blt) - a_{nl}(u_\blt^{nl},v_\blt)  = 0 \quad \forall v_\blt \in \bbV_\blt.
\eeq 
Let $v_\blt$ be an arbitary function in $\bbV_\blt$, by the monotonicity \eqref{monotonicity}, the error equation \eqref{error_non}, the Lipschitz-continuity \eqref{Lip-non}, and the stability \eqref{nonlinear_stability}, we have
\begin{eqnarray*}
\|u^{nl} - u_\blt^{nl}\|_1^2 &\leq& a_{nl}(u^{nl},u^{nl} - u_\blt^{nl}) - a_{nl}(u_\blt^{nl},u^{nl} - u_\blt^{nl}) = a_{nl}(u^{nl},u^{nl} - v_\blt) - a_{nl}(u_\blt^{nl},u^{nl} - v_\blt) \\
&\leq &C(1+ \|u^{nl}\|_1^2 + \|u_\blt^{nl}\|_1^2)\|u^{nl}-u_\blt^{nl}\|_1 \|u^{nl} - v_\blt\|_1 \\
&\leq & C(1+2\|f\|_0^2)\|u^{nl}-u_\blt^{nl}\|_1 \|u^{nl} - v_\blt\|_1.
\end{eqnarray*}
The a priori estimate \eqref{apriori_non} is proved.
\end{proof}

\begin{rem}
For our simple model problem, there are other ways to show that the problems \eqref{NonlinearCon} and  \eqref{NonlinearDis} have unique solutions and to derive a priori error estimates. For example, the problem \eqref{NonlinearCon} can be viewed as the Euler-Lagrangian of the following energy minimization problem:
\beq
\mathtt{E}(u^{nl})  = \inf_{v\in H^1_0(\O)} \mathtt{E}(v)\quad\mbox{where }
\mathtt{E}(v):= \dfrac{1}{2} \|v\|_1^2 + \dfrac{1}{3} \|v^2\|_0^2 - (f,v).
\eeq
The minimization problem has a unique solution based on the convex minimization theory, see \cite{Ciarlet:13}. The a priori error estimate can also be derived from Brezzi-Rappaz-Raviart theory, see \cite{BDMS:15} for details. We choose the monotone operator framework since it is more appropriate for our analysis.
\end{rem}

\section{Least-squares Functional Error Estimator for the model monotone problem}
\setcounter{equation}{0}
Let $\bsigma^{nl} = -\nabla u^{nl}$ in \eqref{eq_non}, then $\gradt \bsigma^{nl} +(u^{nl})^3+u^{nl} =f$. We have the  first-order system:
\beq \label{fosys_non}
\left\{
\begin{array}{rclll}
\bsigma^{nl} + \nabla u^{nl} &=& 0 & \mbox{ in } \O,
\\[1mm]
\gradt\bsigma^{nl} + u^{nl}+(u^{nl})^3  &=&f&  \mbox{ in } \O,\\
u^{nl} &=& 0 &\mbox{ on } \p\O.
\end{array}
\right.
\eeq
For $u^{nl}\in H^1_0(\O)$, we have $(u^{nl})^3\in L^2(\O)$, and $\bsigma^{nl} \in H(\divvr;\O)$. 
For $(\btau,v)\in \bbX$, define the nonlinear least-squares functional for the system \eqref{fosys_non},
\begin{eqnarray} \label{NLS_functional}
\NLS(\btau,v;f) &:=& \|\btau + \nabla v\|_0^2 +\|\gradt\btau +v+v^3-f\|_0^2.
\end{eqnarray}
Given the approximations $(\btau,v)$ of  \eqref{fosys_non}, we can use the nonlinear least-squares functional $\NLS$ as a posteriori error estimator.
\begin{rem}
In contrast to linear problems, the nonlinear least-squares minimization problems and their finite element approximations are less discussed. While the original nonlinear problem \eqref{NonlinearCon} may be convex, its corresponding nonlinear least-squares minimization problem is not necessarily so. Therefore, we can only establish the existence and uniqueness of numerical approximations in the neighborhood of the exact solution with the help of the implicit function theorem. Nevertheless, even the a priori analysis of the original nonlinear least-squares problem is not well-studied, the non-intrusive least-squares functional a posteriori error analysis is still relatively easy. This makes it a useful tool for assessing the accuracy of numerical approximations in nonlinear problems.
\end{rem}

With  the solution of \eqref{NonlinearDis}, $u_\blt^{nl}\in\bbV_\blt$, available, in order to use the least-squares functional error estimator, we need to construct a $\bsigma_\blt\in \bbW_\blt$. By the same principle as the linear problem, we replace the function $v$ in the nonlinear least-squares functional \eqref{NLS_functional} by $u^{nl}$'s approximation $u_\blt^{nl}\in\bbV_\blt\subset H^1_0(\O)$. We get a new functional,
\beq
\NJ(\btau; u_\bullet^{nl}, f) := 
\NLS(\btau,u_\bullet^{nl};f) = \|\btau + \nabla u_\bullet^{nl}\|_0^2 +\|\gradt\btau +(u_\bullet^{nl})^3+u_\bullet^{nl}-f \|_0^2.
\eeq
Then the corresponding minimization problem to find $\bsigma_\bullet^{nr}\in \bbW_\blt$ is:
\beq
\mbox{Find } \bsigma_\bullet^{nr} \in \bbW_\blt  \mbox{ such that }
\NJ(\bsigma_\bullet^{nr};u_\bullet^{nl}, f) = \inf_{\btau\in \bbW_\blt} \NJ(\btau;u_\bullet^{nl}, f). 
\eeq
Or, equivalently: Find $\bsigma_\bullet^{nr} \in \bbW_\blt$, such that,
\beq \label{nl_flux_recovery_discrete}
b_{hdiv}(\bsigma_\bullet^{nr},\btau)
= (-\nabla u_\bullet^{nl},\btau) + (f - u_\bullet^{nl}- (u_\bullet^{nl})^3,\gradt\btau),\quad\forall \btau\in \bbW_\blt,
\eeq
where the $H(\divvr)$-inner product $b_{hdiv}$ is defined as
$$
b_{hdiv}(\brho,\btau):= (\brho,\btau) + (\gradt\brho,\gradt\btau)  \quad \forall \brho,\btau \in H(\divvr;\O).
$$
We define the following continuous problem: Find $\bsigma^{nl}\in H(\divvr;\O)$, such that,  
\beq \label{nl_flux_recovery}
b_{hdiv}(\bsigma^{nl},\btau) = (-\nabla u^{nl},\btau) + (f -u^{nl}- (u^{nl})^3,\gradt\btau)\quad \forall \btau\in H(\divvr;\O).
\eeq
Note that the exact solution $(\bsigma^{nl}, u^{nl})$ of the original first-order system \eqref{fosys_non} satisfies \eqref{nl_flux_recovery}.
We have the following error equation,
\beq \label{erroreq_non}
b_{hdiv}(\bsigma^{nl}-\bsigma_\bullet^{nr},\btau) = -(\nabla u^{nl}-\nabla u_\bullet^{nl},\btau) - (u^{nl}+(u^{nl})^3 - u_\bullet^{nl}- (u_\bullet^{nl})^3,\gradt\btau)\quad \forall \btau\in \bbW_\blt.
\eeq
The problem \eqref{nl_flux_recovery_discrete} can be viewed as a finite element approximation of the continuous problem \eqref{nl_flux_recovery} with the exact solution $u^{nl}$ replaced by its approximation $u_\bullet^{nl}$.

\subsection{A priori estimate of least-squares flux-recovery for the model nonlinear problem}

%

\begin{thm}
For the recovered flux $\bsigma_\blt^{nr}$ of \eqref{nl_flux_recovery_discrete}, we have the 
the following bound:
\begin{equation}
    \|\bsigma_\blt^{nr}\|_{H(\divvr)} \leq c_f,
    \label{SigmaBounded}
\end{equation}
and the following a priori error estimate: 
    \begin{equation}
    \label{apriori_sigma_non}
        \|\bsigma^{nl} - \bsigma_\blt^{nr}\|_{H(\divvr)}\leq  c_f (\inf_{\btau_\blt\in \bbW_\blt} \| \bsigma^{nl}-\btau_\blt\|_{H(\divvr)}+\| u^{nl}-u_\blt^{nl}\|_1).
    \end{equation}
\end{thm}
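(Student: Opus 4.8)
The plan is to exploit the fact that the recovery form $b_{hdiv}$ is precisely the $H(\divvr)$ inner product, so that $b_{hdiv}(\btau,\btau)=\|\btau\|_{H(\divvr)}^2$; coercivity and continuity are therefore free with constant one, and the whole argument reduces to controlling the two right-hand side data terms. For the bound \eqref{SigmaBounded}, I would test \eqref{nl_flux_recovery_discrete} with $\btau=\bsigma_\blt^{nr}$ to get $\|\bsigma_\blt^{nr}\|_{H(\divvr)}^2 = (-\nabla u_\blt^{nl},\bsigma_\blt^{nr}) + (f-u_\blt^{nl}-(u_\blt^{nl})^3,\gradt\bsigma_\blt^{nr})$, then apply Cauchy--Schwarz and absorb a single factor of $\|\bsigma_\blt^{nr}\|_{H(\divvr)}$ using $\|\bsigma_\blt^{nr}\|_0,\|\gradt\bsigma_\blt^{nr}\|_0\leq \|\bsigma_\blt^{nr}\|_{H(\divvr)}$. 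This leaves $\|\bsigma_\blt^{nr}\|_{H(\divvr)}\leq \|\nabla u_\blt^{nl}\|_0+\|f\|_0+\|u_\blt^{nl}\|_0+\|(u_\blt^{nl})^3\|_0$. The first three terms are bounded by $c_f$ via the discrete stability $\|u_\blt^{nl}\|_1\leq\|f\|_0$ from \eqref{nonlinear_stability}, and for the cubic term I would write $\|(u_\blt^{nl})^3\|_0=\|u_\blt^{nl}\|_{0,6}^3$ and invoke the embedding $H^1_0(\O)\hookrightarrow L^6(\O)$ together with stability to bound it by $c_f\|u_\blt^{nl}\|_1^3\leq c_f\|f\|_0^3$.

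For the a priori estimate \eqref{apriori_sigma_non} I would run a standard C\'ea-type argument built on the error equation \eqref{erroreq_non}. Fixing an arbitrary $\btau_\blt\in\bbW_\blt$ and setting $e_\blt:=\btau_\blt-\bsigma_\blt^{nr}\in\bbW_\blt$, I write $\|e_\blt\|_{H(\divvr)}^2 = b_{hdiv}(\btau_\blt-\bsigma^{nl},e_\blt) + b_{hdiv}(\bsigma^{nl}-\bsigma_\blt^{nr},e_\blt)$. The first summand is immediately bounded by $\|\bsigma^{nl}-\btau_\blt\|_{H(\divvr)}\|e_\blt\|_{H(\divvr)}$, and the second is rewritten via \eqref{erroreq_non} (legitimate since $e_\blt\in\bbW_\blt$) as $-(\nabla u^{nl}-\nabla u_\blt^{nl},e_\blt)-(u^{nl}+(u^{nl})^3-u_\blt^{nl}-(u_\blt^{nl})^3,\gradt e_\blt)$. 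The linear pieces contribute at most $c_f\|u^{nl}-u_\blt^{nl}\|_1\|e_\blt\|_{H(\divvr)}$ by Cauchy--Schwarz. Dividing by $\|e_\blt\|_{H(\divvr)}$ and using the triangle inequality $\|\bsigma^{nl}-\bsigma_\blt^{nr}\|_{H(\divvr)}\leq\|\bsigma^{nl}-\btau_\blt\|_{H(\divvr)}+\|e_\blt\|_{H(\divvr)}$, then taking the infimum over $\btau_\blt$, yields the claim.

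The main obstacle is the cubic term $\big((u^{nl})^3-(u_\blt^{nl})^3,\gradt e_\blt\big)$, which is the only genuinely nonlinear contribution. I would control it exactly as in the Lipschitz estimate \eqref{Lip-non}: factor $(u^{nl})^3-(u_\blt^{nl})^3 = (u^{nl}-u_\blt^{nl})\big((u^{nl})^2+u^{nl}u_\blt^{nl}+(u_\blt^{nl})^2\big)$, then apply the generalized H\"older inequality with exponents $1/2=1/6+1/3$ to get $\|(u^{nl})^3-(u_\blt^{nl})^3\|_0\leq \|u^{nl}-u_\blt^{nl}\|_{0,6}\,\|(u^{nl})^2+u^{nl}u_\blt^{nl}+(u_\blt^{nl})^2\|_{0,3}$. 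The Sobolev embedding bounds the $L^6$ factor by $C\|u^{nl}-u_\blt^{nl}\|_1$, while the $L^3$ factor is $\leq C(\|u^{nl}\|_{0,6}^2+\|u_\blt^{nl}\|_{0,6}^2)\leq c_f$ by the continuous and discrete stability bounds \eqref{nonlinear_stability}. Hence this term is also dominated by $c_f\|u^{nl}-u_\blt^{nl}\|_1\|e_\blt\|_{H(\divvr)}$, which closes the estimate. The only subtlety to track is that the hidden constant depends on $f$ through $\|f\|_0$ (and through $\|f\|_0^3$ in the bound \eqref{SigmaBounded}), consistent with the $c_f$ convention.
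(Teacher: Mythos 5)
Your proposal is correct and follows essentially the same route as the paper: testing \eqref{nl_flux_recovery_discrete} with $\bsigma_\blt^{nr}$ plus Sobolev embedding and the stability \eqref{nonlinear_stability} for the bound \eqref{SigmaBounded}, and a C\'ea-type argument combining the error equation \eqref{erroreq_non} with the Lipschitz-type estimate $\|(u^{nl})^3-(u_\blt^{nl})^3\|_0\leq c_f\|u^{nl}-u_\blt^{nl}\|_1$ for \eqref{apriori_sigma_non}. The only cosmetic difference is that you estimate the discrete error $\btau_\blt-\bsigma_\blt^{nr}$ and divide through (which needs the harmless caveat that the case $\btau_\blt=\bsigma_\blt^{nr}$ is trivial), whereas the paper works with $\bsigma^{nl}-\bsigma_\blt^{nr}$ directly and absorbs via Young's inequality.
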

\begin{proof}
We have the bound from \eqref{nl_flux_recovery_discrete} using the Sobolev embedding theorem, the H\"older inequality, and the stability \eqref{nonlinear_stability}, 
$$
    \|\bsigma_\blt^{nr}\|_{H(\divvr)} \leq C(\| u_\blt^{nl} \|_1 + \| u_\blt^{nl} \|_1^3 + \| f\|_0) \leq c_f.
$$
Using similar arguments as \eqref{Lip-non} and the stability \eqref{nonlinear_stability}, we have
\beq
\| (u^{nl})^3 - (u_\blt^{nl})^3\|_{0} \leq c_f \| u^{nl} - u_\blt^{nl}\|_1.
\label{NonlinearL2Error}
\eeq
Let $\btau_\blt$ be an arbitrary function in $\bbW_\blt$, by the Cauchy-Schwarz inequality, the error equation \eqref{erroreq_non}, \eqref{NonlinearL2Error}, the triangle inequality, and the Young's inequality with $\varepsilon$, we have
\begin{equation*}
\begin{aligned}
\|\bsigma^{nl} - \bsigma_\blt^{nr}\|_{H(\divvr)}^2 & = b_{hdiv}(\bsigma^{nl}-\bsigma_\blt^{nr},\bsigma^{nl}-\bsigma_\blt^{nr}) = b_{hdiv}(\bsigma^{nl}-\bsigma_\blt^{nr},\bsigma^{nl}-\btau_\blt) + b_{hdiv}(\bsigma^{nl}-\bsigma_\blt^{nr},\btau_\blt-\bsigma_\blt^{nr})\\
&\leq \| \bsigma^{nl}-\bsigma_\blt^{nr}\|_{H(\divvr)}\| \bsigma^{nl}-\btau_\blt\|_{H(\divvr)}  \\
& - (\nabla u^{nl}-\nabla u_\blt^{nl},\btau_\blt- \bsigma_\blt^{nr}) - ((u^{nl})^3 +u^{nl}- (u_\blt^{nl})^3-u_\blt^{nl},\gradt (\btau_\blt-\bsigma_\blt^{nr}))\\
&\leq \| \bsigma^{nl}-\bsigma_\blt^{nr}\|_{H(\divvr)}\| \bsigma^{nl}-\btau_\blt\|_{H(\divvr)} + c_f\| u^{nl}-u_\blt^{nl}\|_1 \| \btau_\blt-\bsigma_\blt^{nr}\|_{H(\divvr)}\\
            &\leq \| \bsigma^{nl}-\bsigma_\blt^{nr}\|_{H(\divvr)}\| \bsigma^{nl}-\btau_\blt\|_{H(\divvr)} + c_f\| u^{nl}-u_\blt^{ls}\|_1 (\| \bsigma-\bsigma_\blt^{nr}\|_{H(\divvr)} + \| \bsigma^{nl}-\btau_\blt\|_{H(\divvr)})\\
            &\leq \frac{1}{2}\| \bsigma^{nl} - \bsigma_\blt^{nr}\|_{H(\divvr)}^2 + c_f(\| \bsigma^{nl}-\btau_\blt\|_{H(\divvr)}^2 + \| u^{nl}-u_\blt^{nl}\|_1^2).
\end{aligned}
\end{equation*}
The theorem is then proved.
\end{proof}

\subsection{The least-squares functional error estimator for the model nonlinear problem}

We introduce some notations of the least-squares functional error estimator associated a mesh $\cT_\bullet \in \bbT$. Let  $v_\blt \in \bbV_\blt$  and $\btau_\blt \in \bbW_\blt$ be two arbitrary finite element functions in their spaces associated with the mesh  $\cT_\bullet$, respectively.  For an element $T\in \cT_\blt$, we define the element-wise nonlinear least-squares functional error indicator as
\begin{eqnarray}\label{ap_nl_indicator}
\eta^{nls}_\bullet(T;\btau_\bullet,v_\bullet) &:=& \left(\|\btau_\bullet + \nabla v_\bullet\|_{0,T}^2 +\|\gradt\btau_\bullet + v_\bullet+v_\bullet^3-f \|_{0,T}^2\right)^{1/2}.
\end{eqnarray}
For a collection of elements $\cU_\bullet\subset \cT_\bullet$, we define the least-squares a posteriori error estimator defined on $\cU_\bullet$,
\beq \label{ap_nl_indicator_cU}
\eta^{nls}_\bullet(\cU_\bullet; \btau_\bullet,v_\bullet):= \sqrt{ \sum_{T\in\cU_\blt}
\eta^{nls}_\bullet(T;\btau_\bullet,v_\bullet)^2}.
\eeq
For the case $\cU_\blt = \cT_\blt$, we use a simpler notation,
\beq \label{ap_nl_estimator}
\eta^{nls}_\bullet(\btau_\bullet,v_\bullet) := \eta^{nls}_\bullet(\cT_\bullet; \btau_\bullet,v_\bullet).
\eeq
We also have
\beq \label{nlsap}
\eta^{nls}_\bullet(\btau_\bullet,v_\bullet) = \NLS(\btau_\bullet,v_\bullet;f)^{1/2} 
\eeq
Let $\cT_\bullet \in \bbT$. We have computed $u_\bullet^{nl} \in \bbV_\blt$ from the conforming finite element discrete problem \eqref{NonlinearDis} and recovered a numerical flux $\bsigma_\bullet^{nr} \in \bbW_\blt$ from \eqref{nl_flux_recovery_discrete}.  The non-intrusive least-squares functional a posteriori error estimator for nonlinear problem is: 
$$\eta^{nls}_\blt(\bsigma_\bullet^{nr},u_\bullet^{nl}).
$$
Same as the linear case, we view the nonlinear discrete problem \eqref{NonlinearDis} and the flux-recovery problem \eqref{nl_flux_recovery} as a combined two-step problem. 
Similar to the linear case, we show the error equivalence for an arbitary $(\btau,v)\in \bbX$ first, then we show 
the error estimator $\eta^{nls}_\blt(\bsigma_\bullet^{nr},u_\bullet^{nl})$ is then efficient and reliable.
\begin{thm}
For an arbitrary $(\btau,v)\in \bbX$, the error estimator $\eta^{nls}_\blt(\btau,v)$ is reliable and locally efficient: 
\begin{equation} \label{rel_nl1}
\|u^{nl} - v\|_1 + \| \bsigma^{nl}-\btau\|_{H(\divvr)} \leq C(c_f+\|v\|_1^2) \eta^{nls}_\blt(\btau,v),
\end{equation}
    and
\begin{equation} 
\label{nl_eff1}
       \eta^{nls}_\blt(T;\btau,v)\leq C(c_f+\|v\|_1) (\|u^{nl}-v\|_{1,T}+\| \bsigma^{nl}-\btau\|_{H(\divvr;T)}), \quad \forall T\in \mathcal{T}_\blt. 
\end{equation}
In addition, the following reliability bound is independent of the right-hand side $f$,
\beq
\label{rel_nl21}
\|u^{nl} - v\|_1 + \|\bsigma^{nl}-\btau\|_{0} \leq 3\eta^{nls}_\blt(\btau,v).
\eeq
\end{thm}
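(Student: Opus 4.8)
The plan is to handle the three assertions through one common device: substitute the exact first-order system \eqref{fosys_non}, i.e. $\bsigma^{nl}=-\nabla u^{nl}$ and $\gradt\bsigma^{nl}+u^{nl}+(u^{nl})^3=f$, into the two residuals defining the indicator \eqref{ap_nl_indicator}. Setting $r_1:=\btau+\nabla v$ and $r_2:=\gradt\btau+v+v^3-f$, these substitutions turn the residuals into pure error expressions, $r_1=(\btau-\bsigma^{nl})-\nabla(u^{nl}-v)$ and $r_2=\gradt(\btau-\bsigma^{nl})+(v-u^{nl})+(v^3-(u^{nl})^3)$. Every bound below is read off from these two identities, the only genuinely nonlinear ingredient being the cubic difference $v^3-(u^{nl})^3=(v^2+vu^{nl}+(u^{nl})^2)(v-u^{nl})$.

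For the local efficiency \eqref{nl_eff1} I would apply the triangle inequality to the two identities above on each element $T$. The first residual is immediately controlled by $\|\btau-\bsigma^{nl}\|_{0,T}+\|u^{nl}-v\|_{1,T}$, and the first two terms of $r_2$ by $\|\gradt(\btau-\bsigma^{nl})\|_{0,T}$ and $\|u^{nl}-v\|_{0,T}$. The cubic difference is estimated by a generalized H\"older inequality together with the Sobolev embedding $H^1_0(\O)\hookrightarrow L^6(\O)$, exactly as in \eqref{Lip-non}, absorbing the factors of $u^{nl}$ into $c_f$ via the a priori bound $\|u^{nl}\|_1\le\|f\|_0$ of \eqref{nonlinear_stability}; this is what produces the coefficient recorded in \eqref{nl_eff1}. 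I expect the only delicate point here to be the localization of the embedding constant for the cubic term, since $L^6$-type estimates do not localize to a single element without care about shape regularity and scaling.

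The reliability bounds rest on the strong monotonicity \eqref{monotonicity}. I would begin from $\|u^{nl}-v\|_1^2\le a_{nl}(u^{nl},u^{nl}-v)-a_{nl}(v,u^{nl}-v)$, replace the first term by $(f,u^{nl}-v)$ using the weak problem \eqref{NonlinearCon}, and then insert the arbitrary flux $\btau$ through the integration-by-parts identity $(\btau,\nabla w)+(\gradt\btau,w)=0$ applied to $w=u^{nl}-v\in H^1_0(\O)$. The decisive observation is that after regrouping, the right-hand side collapses to exactly $-(r_2,u^{nl}-v)-(r_1,\nabla(u^{nl}-v))$; a single Cauchy--Schwarz inequality in $\R^2$ then gives $\|u^{nl}-v\|_1\le\eta^{nls}_\blt(\btau,v)$, with constant $1$ and no dependence on $f$. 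This step, which mirrors the coercivity argument for the linear least-squares functional and crucially does not assume $\btau$ or $v$ to solve any discrete equation, is the heart of the theorem and the part I expect to require the most care.

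Once $\|u^{nl}-v\|_1\le\eta^{nls}_\blt(\btau,v)$ is in hand, the flux estimates are short. From $\bsigma^{nl}-\btau=-\nabla(u^{nl}-v)-r_1$ I get $\|\bsigma^{nl}-\btau\|_0\le\|u^{nl}-v\|_1+\|r_1\|_0\le 2\eta^{nls}_\blt(\btau,v)$; adding the two $f$-independent estimates yields the sharp constant $3$ of \eqref{rel_nl21}. For the full $H(\divvr)$-reliability \eqref{rel_nl1} I would further estimate $\gradt(\bsigma^{nl}-\btau)=-r_2+(v-u^{nl})+(v^3-(u^{nl})^3)$ in $L^2(\O)$; here the cubic difference reappears and, bounded globally as in \eqref{NonlinearL2Error} but now with the arbitrary $v$, contributes the coefficient $(c_f+\|v\|_1^2)$, which dominates and gives the stated reliability constant.
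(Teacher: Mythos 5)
Your proof is correct and essentially identical to the paper's: both rest on the strong monotonicity \eqref{monotonicity} combined with integration by parts so that the right-hand side collapses to $-(\btau+\nabla v,\nabla(u^{nl}-v))-(\gradt\btau+v+v^3-f,\,u^{nl}-v)$, giving $\|u^{nl}-v\|_1\le\eta^{nls}_\blt(\btau,v)$ with constant one (your use of the weak form \eqref{NonlinearCon} instead of the paper's substitution of the pointwise identity $-\Delta u^{nl}=f-u^{nl}-(u^{nl})^3$ is a cosmetic difference), followed by the same triangle-inequality and H\"older/Sobolev-embedding arguments for \eqref{rel_nl21}, \eqref{rel_nl1}, and \eqref{nl_eff1} with the same constants $3$ and $C(c_f+\|v\|_1^2)$. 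The delicate point you flag is genuine but is equally unaddressed in the paper: its efficiency proof bounds $\|(u^{nl})^3-v^3\|_{0,T}$ directly by $C(c_f+\|v\|_1)\|u^{nl}-v\|_{1,T}$, which requires a local $L^6(T)$ embedding whose constant is not uniform in $h_T$ (by scaling it behaves like $h_T^{-1}$ on the $L^2$ part in three dimensions), so strictly speaking only the globally summed version of the efficiency bound is fully justified there as well.
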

\begin{proof}
For any $(\btau,v)\in \bbX$ and $w\in H^1_0(\O)$, we have
\beq \label{change}
(\nabla u^{nl}-\nabla v,\nabla w) = (\nabla u^{nl} + \btau,\nabla w)-(\btau+\nabla v,\nabla w) = -(\Delta u^{nl} +\gradt \btau,w)-(\btau+\nabla v,\nabla w). 
\eeq
By the monotonicity \eqref{monotonicity}, \eqref{change} with $w=u^{nl}-v$, and the fact that $-\Delta u^{nl} + (u^{nl})^3 + u^{nl} =f$, we get 
\begin{equation*}
\begin{aligned}
\|u^{nl} - v\|_1^2 &\leq a_{nl}(u^{nl}, u^{nl} - v)-a_{nl}(v,u^{nl} - v) =(\nabla u^{nl}-\nabla v,\nabla (u^{nl}- v)) + ((u^{nl})^3- v^3+u^{nl}-v,u^{nl}-v)\\
&= -(\Delta u^{nl} +\gradt \btau,u^{nl}- v)-(\btau+\nabla v,\nabla (u^{nl}- v)) + ((u^{nl})^3- v^3+u^{nl}-v, u^{nl}-v)
\\
&= -(\btau+\nabla v,\nabla (u^{nl}-v)) + (f-\nabla \cdot \btau - v^3 -v,u^{nl}-v)
    \leq \eta^{nls}_\blt(\btau,v)\| u^{nl}-v\|_1.
\end{aligned}
\end{equation*}
Thus, we have 
\beq \label{es1}
\|u^{nl} - v\|_1  \leq \eta^{nls}_\blt(\btau,v) \quad \forall (\btau,v)\in \bbX.
\eeq
By the fact $\bsigma^{nl} = -\nabla u^{nl}$, the triangle inequality, and \eqref{es1}, we have
$$
\| \bsigma^{nl} - \btau\|_0 = \|\nabla u^{nl} + \btau\|_0 \leq \|\btau + \nabla v\|_0 + \|\nabla(u^{nl} - v)\|_0  \leq 2\eta^{nls}_\blt(\btau,v) \quad \forall (\btau,v)\in \bbX.
$$     
The result \eqref{rel_nl21} is proved.

By the fact $f= \gradt \bsigma^{nl}+ u^{nl}+(u^{nl})^3$,  the triangle inequality, and (\ref{NonlinearL2Error}), we have
\begin{eqnarray*}
\|\nabla \cdot \bsigma^{nl} - \nabla \cdot \btau\|_0 
&\leq& \| \nabla \cdot \btau + v^3 +v-f\|_0 + \| (u^{nl})^3 -v^3\|_0 +\| u^{nl}-v\|_0  \\
&\leq& \| \nabla \cdot \btau + v^3 +v-f\|_0+ C(c_f+\|v\|_1^2)\| u^{nl}-v\|_1 
\\
& \leq&  C(c_f+\|v\|_1^2) \eta^{nls}_\blt(\btau,v).
\end{eqnarray*}
%
%
Combined the above results, we have \eqref{rel_nl1}.

By the facts  $\bsigma^{nl} = -\nabla u^{nl}$ and  $f= \gradt \bsigma^{nl}+ u^{nl}+(u^{nl})^3$, and the triangle inequality, we have
\begin{eqnarray*}
\eta^{nls}_\blt(T;\btau,v) &\leq & \|\btau+ \nabla v\|_{0,T} + \| \nabla \cdot \btau + v^3 + v_\blt -f \|_{0,T}\\
&\leq & \| \bsigma^{nl} - \btau\|_{0,T} + \| \nabla u^{nl} - \nabla v\|_{0,T} 
+ \| \nabla \cdot (\bsigma^{nl} - \btau)\|_{0,T} + \| (u^{nl})^3 - v^3\|_{0,T}    + \| u^{nl}- v\|_{0,T}\\
&\leq & C(c_f+\|v_\blt\|_1)  (\| u^{nl} - v\|_{1,T} + \| \bsigma^{nl}-\btau\|_{H(\divvr;T)}).
    \end{eqnarray*}
The efficiency \eqref{nl_eff1} is proved.
\end{proof}

Choosing $(\btau,v)$ to be $(\bsigma_\bullet^{nr},u_\bullet^{nl})$ in the above theorem and using the stability \eqref{nonlinear_stability}, we have the following result.
\begin{eqnarray} \label{rel_nl2}
\|u^{nl} - u_\blt^{nl}\|_1 + \|\bsigma^{nl}-\bsigma_\blt^{nr}\|_{0} 
&\leq& 3\eta^{nls}_\blt(\bsigma_\bullet^{nr},u_\bullet^{nl}),
\\ \label{rel_nl}
\|u^{nl} - u_\blt^{nl}\|_1 + \| \bsigma^{nl}-\bsigma_\blt^{nr}\|_{H(\divvr)} &\leq& c_f\eta^{nls}_\blt(\bsigma_\bullet^{nr},u_\bullet^{nl}),
\end{eqnarray}
    and
\begin{equation} 
\label{nl_eff}
       \eta^{nls}_\blt(T;\bsigma_\bullet^{nr},u_\bullet^{nl})\leq c_f(\|u^{nl}-u_\blt^{nl}\|_{1,T}+\| \bsigma-\bsigma_\blt^{nr}\|_{H(\divvr;T)}), \quad \forall T\in \mathcal{T}_\blt. 
\end{equation}

The following lemmas are useful for the plain convergence.
\begin{lem}
For any $T\in \cT_\blt$,  the local indicator $\eta^{nls}_\blt(T;\bsigma_\bullet^{nr},u_\bullet^{nl})$ is stable in the following sense:
\begin{equation}
\eta^{nls}_\blt(T;\bsigma_\bullet^{nr},u_\bullet^{nl}) \leq c_f(\| u_\blt^{nl}\|_{1,T}+ \| \bsigma_\blt^{nr}\|_{H(\divvr;T)} + \| f\|_{0,T}).
        \label{NonlinearStable}
\end{equation}
The estimator $\eta^{nls}_\blt(\bsigma_\bullet^{nr},u_\bullet^{nl})$ is uniformly bounded with respect to $\cT_\blt$ in the following sense,
\begin{equation} \label{bdd_eta_nl}
        \eta^{nls}_\blt(\bsigma_\bullet^{nr},u_\bullet^{nl})\leq c_f.
\end{equation}
\end{lem}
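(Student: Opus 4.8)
The plan is to treat both estimates as consequences of a single elementwise decomposition, since by its definition \eqref{ap_nl_indicator} the indicator $\eta^{nls}_\blt(T;\bsigma_\bullet^{nr},u_\bullet^{nl})$ is built from two $L^2(T)$ residuals. I would first establish the local stability \eqref{NonlinearStable} by the triangle inequality, and then deduce the uniform global bound \eqref{bdd_eta_nl}, preferring a direct estimate of the global functional $\NLS(\bsigma_\bullet^{nr},u_\bullet^{nl};f)$ from \eqref{nlsap} over a brute-force summation, for the reason explained below.

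For \eqref{NonlinearStable} I would split the indicator into its two defining residuals. The first is immediate, $\|\bsigma_\bullet^{nr}+\nabla u_\bullet^{nl}\|_{0,T}\le \|\bsigma_\bullet^{nr}\|_{0,T}+\|u_\bullet^{nl}\|_{1,T}\le \|\bsigma_\bullet^{nr}\|_{H(\divvr;T)}+\|u_\bullet^{nl}\|_{1,T}$. For the second I would write
\[
\|\gradt\bsigma_\bullet^{nr}+u_\bullet^{nl}+(u_\bullet^{nl})^3-f\|_{0,T}\le \|\bsigma_\bullet^{nr}\|_{H(\divvr;T)}+\|u_\bullet^{nl}\|_{1,T}+\|(u_\bullet^{nl})^3\|_{0,T}+\|f\|_{0,T},
\]
where the divergence term is absorbed into the $H(\divvr;T)$-norm, the linear term into $\|u_\bullet^{nl}\|_{1,T}$, and $\|f\|_{0,T}$ is local data. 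Every term except the cubic one already matches the claimed right-hand side.

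The cubic term $\|(u_\bullet^{nl})^3\|_{0,T}$ is the main obstacle, and it is the only place the nonlinearity must be linearized. I would apply the generalized H\"older inequality with $\tfrac12=\tfrac13+\tfrac16$ to peel off a globally controlled factor,
\[
\|(u_\bullet^{nl})^3\|_{0,T}\le \|(u_\bullet^{nl})^2\|_{0,3,T}\,\|u_\bullet^{nl}\|_{0,6,T}\le \|u_\bullet^{nl}\|_{0,6,\Omega}^2\,\|u_\bullet^{nl}\|_{0,6,T},
\]
using that the $L^3$-norm over $T$ is dominated by the one over $\Omega$. The global factor is then bounded \emph{independently of the approximation} by the Sobolev embedding $H^1_0(\Omega)\hookrightarrow L^6(\Omega)$ together with the a priori stability $\|u_\bullet^{nl}\|_1\le\|f\|_0$ from \eqref{nonlinear_stability}, giving $\|u_\bullet^{nl}\|_{0,6,\Omega}^2\le c_f$ (this is exactly the mechanism already used for \eqref{NonlinearL2Error}). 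The delicate point is the residual local factor $\|u_\bullet^{nl}\|_{0,6,T}$: reducing it to $\|u_\bullet^{nl}\|_{1,T}$ relies on a local Sobolev embedding whose constant is not mesh-uniform, so one either accepts this local embedding or bounds the whole cubic contribution crudely by $\|u_\bullet^{nl}\|_{0,6,\Omega}^3\le c_f$. This subtlety is precisely why the cube cannot be dispatched by the plain triangle inequality alone.

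Finally, for the uniform bound \eqref{bdd_eta_nl} I would avoid squaring and summing \eqref{NonlinearStable} directly, because the nonlinear term does not sum cleanly ($\sum_T\|u_\bullet^{nl}\|_{0,6,T}^2\ne\|u_\bullet^{nl}\|_{0,6,\Omega}^2$). Instead I would estimate the global functional termwise: the first residual gives $\|\bsigma_\bullet^{nr}+\nabla u_\bullet^{nl}\|_0\le \|\bsigma_\bullet^{nr}\|_{H(\divvr)}+\|u_\bullet^{nl}\|_1$, and the second gives $\|\bsigma_\bullet^{nr}\|_{H(\divvr)}+\|u_\bullet^{nl}\|_1+\|(u_\bullet^{nl})^3\|_0+\|f\|_0$, where now the cubic contribution is the honestly global $\|(u_\bullet^{nl})^3\|_0=\|u_\bullet^{nl}\|_{0,6,\Omega}^3\le c_f$. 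Collecting, $\eta^{nls}_\blt(\bsigma_\bullet^{nr},u_\bullet^{nl})\le c_f(\|u_\bullet^{nl}\|_1+\|\bsigma_\bullet^{nr}\|_{H(\divvr)}+\|f\|_0)$, and each global quantity is bounded by $c_f$ through $\|u_\bullet^{nl}\|_1\le\|f\|_0$ from \eqref{nonlinear_stability}, $\|\bsigma_\bullet^{nr}\|_{H(\divvr)}\le c_f$ from \eqref{SigmaBounded}, and $\|f\|_0$ being data, which yields \eqref{bdd_eta_nl}.
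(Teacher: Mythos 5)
Your treatment of the local bound \eqref{NonlinearStable} follows the same decomposition as the paper's own proof, and the ``delicate point'' you flag is not a defect of your write-up: it is a genuine gap in the paper's argument. The paper's proof consists of the triangle inequality applied to \eqref{ap_nl_indicator} followed by the assertion that $\|(u_\blt^{nl})^3\|_{0,T}$ can be absorbed into $c_f\|u_\blt^{nl}\|_{1,T}$, citing only \eqref{nonlinear_stability}; this is exactly the step you isolate. After H\"older and the global embedding give $\|(u_\blt^{nl})^3\|_{0,T}\le c_f\|u_\blt^{nl}\|_{0,6,T}$, passing to $\|u_\blt^{nl}\|_{1,T}$ needs the local embedding $\|v\|_{0,6,T}\lesssim \|v\|_{1,T}$, whose constant scales like $1+h_T^{-1}$ in three dimensions (like $1+h_T^{-2/3}$ in two), hence is not mesh-uniform. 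Moreover, the inequality $\|(u_\blt^{nl})^3\|_{0,T}\le c_f\|u_\blt^{nl}\|_{1,T}$ cannot follow from \eqref{nonlinear_stability} alone: a piecewise linear plateau of height $R^{-1/2}$ and width $R$ has $H^1$-norm of order one, yet on an element $T$ inside the plateau the ratio $\|u^3\|_{0,T}/\|u\|_{1,T}$ is of order $R^{-1}$. So neither of your fallback options proves \eqref{NonlinearStable} as stated (a non-uniform constant is not of the advertised type, and the crude bound by a global $c_f$ produces an additive constant that the purely local right-hand side of \eqref{NonlinearStable} cannot absorb) --- but neither does the paper. A genuine repair needs extra input, e.g.\ a mesh-uniform bound on $\|u_\blt^{nl}\|_{0,\infty,\O}$ (then $\|(u_\blt^{nl})^3\|_{0,T}\le \|u_\blt^{nl}\|_{0,\infty,\O}^2\|u_\blt^{nl}\|_{0,T}$), which the paper does not establish, or a weakening of \eqref{NonlinearStable} in which $\|u_\blt^{nl}\|_{0,6,T}$ is added to the right-hand side; the latter is harmless for the subsequent convergence analysis since that quantity is still absolutely continuous with respect to the Lebesgue measure.

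For the uniform bound \eqref{bdd_eta_nl} you take a different route from the paper, and yours is the sounder one. The paper obtains \eqref{bdd_eta_nl} by summing \eqref{NonlinearStable} over $T\in\cT_\blt$ and invoking \eqref{nonlinear_stability} and \eqref{SigmaBounded}, so its proof inherits the gap above. You instead bound the global functional directly via \eqref{nlsap}, using $\|(u_\blt^{nl})^3\|_{0}=\|u_\blt^{nl}\|_{0,6,\O}^3\le c_f$ from the global Sobolev embedding and \eqref{nonlinear_stability}, together with \eqref{SigmaBounded}; this argument is complete, correct, and independent of the problematic local step. Your side remark is also right that the locally H\"olderized bounds cannot simply be summed, since $\sum_{T\in\cT_\blt}\|u_\blt^{nl}\|_{0,6,T}^2$ may exceed $\|u_\blt^{nl}\|_{0,6,\O}^2$ by a factor that grows with the number of elements.
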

\begin{proof}
By the triangle inequality and \eqref{nonlinear_stability}, we have
\begin{equation*}
\begin{aligned}
\eta^{nls}_\blt(T;\bsigma_\bullet^{nr},u_\bullet^{nl}) & \leq C(\| \bsigma_\blt^{nr}\|_{0,T} + \| \nabla u_\blt^{nl}\|_{0,T} + \| \nabla \cdot \bsigma_\blt^{nr}\|_{0,T} + \|u_\blt^{nl}\|_{0,T}+ \| (u_\blt^{nl})^3\|_{0,T} + \| f\|_{0,T})\\
            &\leq C(\| \bsigma_\blt^{nr}\|_{H(\divvr;T)} + c_f \|u_\blt^{nl}\|_{1,T} + \| f\|_{0,T})
            \leq c_f (\| u_\blt^{nl}\|_{1,T}+ \| \bsigma_\blt^{nr}\|_{H(\divvr;T)} + \| f\|_{0,T}).
        \end{aligned}
\end{equation*}
Then by \eqref{nonlinear_stability} and \eqref{SigmaBounded}, we have \eqref{bdd_eta_nl},
\end{proof}

%
%

Given $(\brho,w) \in \bbX$, for all $(\btau,v) \in \bbX$, define the residual $\cR^{nl}(\brho,w)\in \bbX^*$ by
\beq
\langle\cR^{nl}(\brho,w), (\btau,v) \rangle := a_{nl}(w,v) -(f,v)  +(\brho+\nabla w,\btau) + (\gradt\brho+w^3+w-f,\nabla \cdot \btau).
\eeq

\begin{lem}
Given $(\brho,w) \in \bbX$, we have the following upper bound of the residual,
\beq
\langle\cR^{nl}(\brho,w), (\btau,v) \rangle \leq \sum_{T\in \cT_\blt }\eta^{nls}_\blt(T;\brho,w) \tri (\btau,v)\tri_T \quad \forall (\btau,v) \in \bbX.
\eeq

\end{lem}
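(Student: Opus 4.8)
The plan is to reproduce the structure of the linear residual bound in Lemma~\ref{lem_upp}: the two recovery terms of $\cR^{nl}$ already have exactly the form of the least-squares residual components, so the only genuine work is to rewrite the Galerkin part $a_{nl}(w,v)-(f,v)$ so that these same two quantities reappear, now tested against $\nabla v$ and $v$. It is convenient to abbreviate the two residual components by $R_1:=\brho+\nabla w$ and $R_2:=\gradt\brho+w+w^3-f$, so that $\eta^{nls}_\blt(T;\brho,w)=(\|R_1\|_{0,T}^2+\|R_2\|_{0,T}^2)^{1/2}$.

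First I would treat $a_{nl}(w,v)-(f,v)=(\nabla w,\nabla v)+(w,v)+(w^3,v)-(f,v)$. Into the gradient term I insert $\brho$ and integrate by parts via the identity $(\brho,\nabla v)+(\gradt\brho,v)=0$, valid for $\brho\in H(\divvr;\O)$ and $v\in H^1_0(\O)$ (the same identity that opens the norm-equivalence proof). This gives $(\nabla w,\nabla v)=(R_1,\nabla v)+(\gradt\brho,v)$, and collecting the zeroth-order and forcing contributions yields the clean identity
\[
a_{nl}(w,v)-(f,v)=(\brho+\nabla w,\nabla v)+(\gradt\brho+w+w^3-f,\,v).
\]
The cubic term is harmless here precisely because $a_{nl}$ is linear in its second argument, so $w^3$ is simply carried inside $R_2$; no monotonicity \eqref{monotonicity} or Lipschitz estimate \eqref{Lip-non} is needed at this stage.

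Combining this identity with the definition of $\cR^{nl}$, whose recovery part is exactly $(\brho+\nabla w,\btau)+(\gradt\brho+w^3+w-f,\gradt\btau)=(R_1,\btau)+(R_2,\gradt\btau)$, produces the decomposition
\[
\langle\cR^{nl}(\brho,w),(\btau,v)\rangle=(R_1,\nabla v+\btau)+(R_2,\,v+\gradt\btau).
\]
Finally I would bound this elementwise: grouping the four inner products as a ``Galerkin part'' $(R_1,\nabla v)_T+(R_2,v)_T$ and a ``recovery part'' $(R_1,\btau)_T+(R_2,\gradt\btau)_T$, two applications of the discrete Cauchy--Schwarz inequality on the pair $(\|R_1\|_{0,T},\|R_2\|_{0,T})$ give respectively $\eta^{nls}_\blt(T;\brho,w)\,\|v\|_{1,T}$ and $\eta^{nls}_\blt(T;\brho,w)\,\|\btau\|_{H(\divvr;T)}$; adding them and recognizing $(\|v\|_{1,T}^2+\|\btau\|_{H(\divvr;T)}^2)^{1/2}=\tri(\btau,v)\tri_{T}$, then summing over $T\in\cT_\blt$, delivers the stated bound.

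I do not expect a genuine obstacle here: as in Lemma~\ref{lem_upp}, essentially all the content is the integration-by-parts rewriting of the Galerkin term, and the nonlinearity never enters beyond being absorbed into $R_2$. The only point demanding a little care is the closing Cauchy--Schwarz step, where combining the Galerkin and recovery contributions against the single triple norm $\tri(\btau,v)\tri_{T}$ involves the elementary inequality $x+y\le\sqrt2\,(x^2+y^2)^{1/2}$; one keeps the pairings of $R_1,R_2$ against $(\nabla v,v)$ and $(\btau,\gradt\btau)$ separate so that this factor remains a universal constant, independent of $\brho,w,f$ and of the mesh, exactly matching the constant made explicit in the linear estimate.
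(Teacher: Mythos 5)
Your proof is correct and follows essentially the same route as the paper: the identical integration-by-parts identity $(\brho,\nabla v)+(\gradt\brho,v)=0$ is used to reduce the residual to the key decomposition $\langle\cR^{nl}(\brho,w),(\btau,v)\rangle=(\brho+\nabla w,\,\nabla v+\btau)+(\gradt\brho+w+w^3-f,\,v+\gradt\btau)$, followed by elementwise Cauchy--Schwarz. The only difference is in the final grouping, where your version honestly flags the factor $\sqrt2$ — a factor the paper's own last step silently requires as well (its stated constant-free inequality fails, e.g., when all elementwise norms are equal) — but this constant is immaterial for the subsequent convergence argument.
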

\begin{proof}
By the definition of $\cR^{nl}$, integration by parts, and Cauchy-Schwarz inequality, we have
\begin{eqnarray*}
&&\langle\cR^{nl}(\brho,w), (\btau,v) \rangle =  a_{nl}(w,v) - (f,v) + (\brho+w,\btau) + (\nabla \cdot \brho +w^3 + w - f,\nabla \cdot \btau) \\
&&=  (\nabla w,\nabla v) + (w^3 +w -f,v)+(\brho+\nabla w,\btau) + (\nabla \cdot \brho + w^3+w - f,\nabla \cdot \btau)  \\
 && =  (\nabla w + \brho,\nabla v+\btau) - (\brho, \nabla v) + (w^3 +w -f,v) + (\nabla \cdot \brho +w^3 +w - f,\nabla \cdot \btau) \\
&& = (\nabla w + \brho ,\nabla v+\btau) + (\nabla\cdot \brho + w^3 +w - f,v+\nabla \cdot \btau) \\
            &&\leq \sum_{T\in \cT_\blt} (\|\nabla w + \brho \|_{0,T}(\|\nabla v\|_{0,T}+\|\btau\|_{0,T}) + \|\nabla\cdot \brho +w^3 +w - f\|_{0,T}(\| v\|_{0,T}+\|\gradt\btau\|_{0,T})\\
            &&\leq \sum_{T\in\cT_\blt}\eta^{nls}_\blt(T;\brho,w)\tri (\btau,v)\tri_T \quad \forall (\btau,v) \in \bbX.
\end{eqnarray*}
The lemma is proved.
\end{proof}
Choosing $(\brho,w)$ to be $(\bsigma_\blt^{nr}, u_\bullet^{nl})$ in the above lemma, we have
\beq
\langle\cR^{nl}(\bsigma_\blt^{nr},u_\bullet^{nl}), (\btau,v) \rangle \leq \sum_{T\in \cT_\blt }\eta^{nls}_\blt(T;\bsigma_\bullet^{nr},u_\bullet^{nl}) \tri (\btau,v)\tri_T \quad \forall (\btau,v) \in \bbX.
        \label{NonlinearResidualBound}
\eeq

\section{Plain Convergence for the Adaptive Algorithm for the model nonlinear case}
\setcounter{equation}{0}

In this section, we prove the plain convergence for the adaptive algorithm driven by the non-intrusive least-squares functional estimator for the model nonlinear problem. We use the following adaptive algorithm for the nonlinear case. It is almost identical to Algorithm 1 with some necessary changes.

\begin{algorithm}[H]
 \KwIn{Initial triangulation $\cT_0$.}

 \For{$\ell = 0,1,2,\cdots$ }{
  (i) {\bf Solve}. Compute the discrete solution $u_{\ell}^{nl}\in \bbV_\ell$ by solving \eqref{NonlinearDis}.
  
  (ii){\bf Nonlinear Least-Squares Recovery}. Recover $\bsigma_{\ell}^{nr}\in \bbW_\ell$ by solving the least-squares recovery problem \eqref{nl_flux_recovery_discrete}. 
    
  (iii){\bf Estimate.} Compute $\eta_{\ell}^{nls}(T;\bsigma^{nr}_\ell,u^{nl}_\ell)$ from \eqref{ap_nl_indicator}, for all $T\in\cT_\ell$.
  
 (iv){\bf Mark.} Mark a set $\cM_\ell\subset \cT_\ell$ satisfying {\bf Assumption M} using $\eta_{\ell}^{nls}(T;\bsigma^{nr}_\ell,u^{nl}_\ell)$.
 
 (v){\bf Refine.} Let $\cT_{\ell+1}:= \mbox{refine}(\cT_\ell,\cM_\ell) $
 }
  \KwOut{Sequences of approximations $(\bsigma_{\ell}^{nr}, u_{\ell}^{nl})$  and corresponding error estimators $\eta_{\ell}^{nls}(\bsigma^{nr}_\ell, u^{nl}_\ell)$.}
\caption{Adaptive Algorithm  with Non-intrusive Least-Squares Functional Estimator for Nonlinear Problem}
\end{algorithm}

Define the following spaces
$$
\bbV_{\infty}:= \overline{\bigcup_{\ell\geq 0}\bbV_\ell } \subset H^1_0(\Omega) \quad\mbox{and}\quad
\bbW_{\infty}:= \overline{\bigcup_{\ell\geq 0}\bbW_\ell }\subset H(\divvr;\Omega).
$$
Consider the following two problems in $\bbV_{\infty}$ and $\bbW_{\infty}$.

Find $u_{\infty}^{nl} \in \bbV_{\infty}$ such that
\beq \label{uinf}
a_{nl}(u_{\infty}^{nl},v_{\infty}) = (f,v_{\infty})\quad \forall v_{\infty}\in \bbV_{\infty},
\eeq
and find $\bsigma_{\infty}^{nr}\in \bbW_\infty$ such that
\beq \label{sigmainf}
b_{hdiv}(\bsigma_{\infty}^{nr},\btau_{\infty}) = -(\nabla u_{\infty}^{nl},\btau_{\infty}) + (f-(u_{\infty}^{nl})^3-u_{\infty}^{nl},\nabla \cdot \btau_{\infty})      \quad\forall \btau_{\infty}\in \bbW_{\infty}.
\eeq

We have the following lemma.
\begin{lem}
Problem (\ref{uinf}) has a unique solution $u^{nl}_\infty\in \bbV_\infty$ and problem (\ref{sigmainf}) has a unique solution $\bsigma_{\infty}^{nr}\in \bbW_\infty$. 
Both solutions have the upper bounds, 
\beq \label{nonlinear_stability_inf}
       \| u^{nl}_\infty \|_{1} \leq \|f\|_0 \quad \mbox{and}\quad \| \bsigma_{\infty}^{nr} \|_{H(\divvr;\O)} \leq  c_f. 
\eeq
The solution $u_{\infty}^{nl}$ and $\bsigma_{\infty}^{nr}$ satisfy the following convergence results,
\begin{eqnarray}
        \| u_{\infty}^{nl} - u_{\ell}^{nl}\|_1 \rightarrow 0 \quad \mbox{and} \quad \| \bsigma_{\infty}^{nr} - \bsigma_{\ell}^{nr}\|_{H(\divvr;\O)} \rightarrow 0
         \quad \mbox{as}\quad \ell  \rightarrow  \infty.
        \label{UkConvergenceNonlinear} 
\end{eqnarray}
\end{lem}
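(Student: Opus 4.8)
The plan is to treat \eqref{uinf} and \eqref{sigmainf} exactly as their finite-dimensional counterparts \eqref{NonlinearDis} and \eqref{nl_flux_recovery_discrete}, using that $\bbV_\infty$ and $\bbW_\infty$ are \emph{closed} subspaces of $H^1_0(\O)$ and $H(\divvr;\O)$, hence Hilbert spaces in their own right. For \eqref{uinf}, the operator $\cZ$ remains hemicontinuous, coercive, and strictly monotone when restricted to $\bbV_\infty$, so the Minty--Browder theorem gives a unique $u^{nl}_{\infty}$, just as for \eqref{NonlinearCon} and \eqref{NonlinearDis}. For \eqref{sigmainf}, once $u^{nl}_{\infty}$ is fixed the bilinear form $b_{hdiv}$ is the $H(\divvr)$-inner product and the right-hand side is a bounded linear functional on $\bbW_\infty$ (the cubic term lies in $L^2$ by Sobolev embedding), so the Lax--Milgram/Riesz representation theorem yields a unique $\bsigma^{nr}_{\infty}$. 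The bounds \eqref{nonlinear_stability_inf} follow by testing each problem with its own solution: choosing $v_\infty=u^{nl}_{\infty}$ and invoking coercivity \eqref{coercivity_nonlinear} reproduces $\|u^{nl}_{\infty}\|_1\le\|f\|_0$ verbatim as in \eqref{nonlinear_stability}, and choosing $\btau_\infty=\bsigma^{nr}_{\infty}$ together with this bound and Sobolev embedding gives $\|\bsigma^{nr}_{\infty}\|_{H(\divvr;\O)}\le c_f$ exactly as in \eqref{SigmaBounded}.

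For the convergence of $u^{nl}_\ell$, I would use nestedness $\bbV_\ell\subset\bbV_\infty$ (Proposition \ref{S1S2}): restricting \eqref{uinf} to test functions in $\bbV_\ell$ gives the Galerkin identity $a_{nl}(u^{nl}_{\infty},v_\ell)=(f,v_\ell)=a_{nl}(u^{nl}_\ell,v_\ell)$ for all $v_\ell\in\bbV_\ell$, which is precisely the error equation \eqref{error_non} with $u^{nl}$ replaced by $u^{nl}_{\infty}$. Since both $u^{nl}_{\infty}$ and $u^{nl}_\ell$ obey $\|\cdot\|_1\le\|f\|_0$, the same chain of monotonicity \eqref{monotonicity}, Lipschitz continuity \eqref{Lip-non}, and stability that proved \eqref{apriori_non} yields the quasi-best-approximation estimate $\|u^{nl}_{\infty}-u^{nl}_\ell\|_1\le c_f\inf_{v_\ell\in\bbV_\ell}\|u^{nl}_{\infty}-v_\ell\|_1$. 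Because $\bigcup_\ell\bbV_\ell$ is dense in $\bbV_\infty$ and the spaces are nested, the right-hand side vanishes: given $\varepsilon>0$, pick $w\in\bbV_{\ell_0}$ with $\|u^{nl}_{\infty}-w\|_1<\varepsilon$, and then $w\in\bbV_\ell$ for all $\ell\ge\ell_0$. This proves the first half of \eqref{UkConvergenceNonlinear}.

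For the flux, the subtlety is that $\bsigma^{nr}_\ell$ and $\bsigma^{nr}_{\infty}$ solve $H(\divvr)$-projection problems with \emph{different} data, namely $u^{nl}_\ell$ versus $u^{nl}_{\infty}$, so I would split the error through an intermediate Galerkin projection $\tilde\bsigma_\ell\in\bbW_\ell$ defined by $b_{hdiv}(\tilde\bsigma_\ell,\btau_\ell)=-(\nabla u^{nl}_{\infty},\btau_\ell)+(f-(u^{nl}_{\infty})^3-u^{nl}_{\infty},\gradt\btau_\ell)$ for all $\btau_\ell\in\bbW_\ell$, i.e. the data of \eqref{sigmainf} tested only in $\bbW_\ell$. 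Since $\tilde\bsigma_\ell$ is exactly the $b_{hdiv}$-orthogonal projection of $\bsigma^{nr}_{\infty}$ onto $\bbW_\ell$, density of $\bigcup_\ell\bbW_\ell$ in $\bbW_\infty$ with nestedness gives $\|\bsigma^{nr}_{\infty}-\tilde\bsigma_\ell\|_{H(\divvr;\O)}\to0$. Subtracting the equations for $\tilde\bsigma_\ell$ and $\bsigma^{nr}_\ell$ and testing with $\btau_\ell=\tilde\bsigma_\ell-\bsigma^{nr}_\ell$, the residual is driven entirely by $u^{nl}_{\infty}-u^{nl}_\ell$; bounding the cubic difference as in \eqref{NonlinearL2Error} yields $\|\tilde\bsigma_\ell-\bsigma^{nr}_\ell\|_{H(\divvr;\O)}\le c_f\|u^{nl}_{\infty}-u^{nl}_\ell\|_1$, which vanishes by the previous paragraph. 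The triangle inequality then closes the second half of \eqref{UkConvergenceNonlinear}.

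The main obstacle is exactly this last flux step: unlike the scalar convergence it cannot be reduced to a single quasi-best-approximation estimate, because the data themselves move with $\ell$. The intermediate projection $\tilde\bsigma_\ell$ is the device that decouples the spatial discretization error (controlled by density of $\bbW_\infty$) from the data-perturbation error (controlled by the already-established $H^1$-convergence of $u^{nl}_\ell$); everything else is a direct transcription of the a priori arguments proved for the finite-dimensional problems.
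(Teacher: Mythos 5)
Your proposal is correct and follows essentially the same route as the paper: Minty--Browder (restricted to the closed subspace $\bbV_\infty$) and the Riesz/Lax--Milgram argument on $\bbW_\infty$ for existence and uniqueness, stability by testing with the solutions themselves, and Galerkin quasi-best-approximation plus density with nestedness for $\| u_{\infty}^{nl} - u_{\ell}^{nl}\|_1 \rightarrow 0$. For the flux, your intermediate projection $\tilde\bsigma_\ell$ is only a cosmetic repackaging of the paper's step, which instead cites the analogue of the a priori estimate \eqref{apriori_sigma_non} for $(\bsigma_{\infty}^{nr},u_{\infty}^{nl})$, namely $\| \bsigma_{\infty}^{nr} - \bsigma_\ell^{nr}\|_{H(\divvr)}\leq c_f (\inf_{\btau\in \bbW_\ell} \| \bsigma_{\infty}^{nr} -\btau\|_{H(\divvr)}+\|u^{nl}_\infty -u_\ell^{nl}\|_1)$ --- an estimate whose own proof is exactly your splitting into a best-approximation term and a data-perturbation term controlled by $c_f\|u^{nl}_\infty -u_\ell^{nl}\|_1$.
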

\begin{proof}
Due to the fact the operator $\cT$ is hemocontinuous, coercive, and strictly monotone on $\bbV_{\infty}$, we have the existence and uniqueness of \eqref{uinf}. The stability that $\| u^{nl}_\infty \|_{1} \leq \|f\|_0$ is the consequence of the coercivity \eqref{coercivity_nonlinear}.

By the fact $\bbV_\ell \subset \bbV_\infty$, the solution $u_\ell^{nl} \in \bbV_\ell$ can be viewed as the Galerkin projection of $u^{nl}_\infty$ in  $\bbV_\ell$. Thus, we have the a priori error estimates similar to \eqref{apriori_non},
\beq 
\| u^{nl}_\infty - u_\ell^{nl} \|_1 \leq c_f \inf_{v_\ell\in \bbV_\ell} \| u^{nl}_\infty-v_\ell\|_1.
\eeq
By the definition of $\bbV_\infty$, the space $\{\bbV_\ell\}_{\ell \geq 0}$ is dense in $\bbV_{\infty}$, thus we have $\| u_{\infty}^{nl} - u_{\ell}^{nl}\|_1 \rightarrow 0$.

The existence and uniqueness of \eqref{sigmainf} is obvious due to the definition of $b_{hdiv}$. The stability  $ \| \bsigma_{\infty}^{nr} \|_{H(\divvr;\O)} \leq  c_f$ can be proved in a similar fashion as that of \eqref{SigmaBounded}. By the fact $\bbW_\ell \subset \bbW_\infty$, we can get the following a priori estimate similar to \eqref{apriori_sigma_non},
\begin{equation}
\| \bsigma_{\infty}^{nr}  - \bsigma_\ell^{nr}\|_{H(\divvr)}\leq  c_f (\inf_{\btau\in \bbW_\ell} \| \bsigma_{\infty}^{nr} -\btau\|_{H(\divvr)}+\|u^{nl}_\infty -u_\ell^{nl}\|_1).
\end{equation}
The space $\{\bbW_\ell\}_{\ell \geq 0}$ is dense in $\bbW_{\infty}$, thus we have $\| \bsigma_{\infty}^{nr}  - \bsigma_\ell^{nr}\|_{H(\divvr)} \rightarrow 0$.
\end{proof}
%
%
%

\begin{lem} ({\bf convergence of estimator on marked elements})
Suppose that the marking strategy and the mesh-refinement in Algorithm 2 satisfy {\bf Assumptions (M) and (R1), (R2), (R3)}, then 
\begin{equation}
\lim_{\ell \rightarrow \infty} \max\{\eta^{nls}_\ell(T;\bsigma_\ell^{nr},u_\ell^{nl}) : T\in \mathcal{M}_\ell \} = 0.
        \label{EtaMarkConvergenceNonlinear}
\end{equation}
\end{lem}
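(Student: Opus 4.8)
The plan is to estimate the marked-element indicators by comparing the discrete pair $(\bsigma_\ell^{nr},u_\ell^{nl})$ with the \emph{fixed} limiting pair $(\bsigma_\infty^{nr},u_\infty^{nl})$ from \eqref{uinf}--\eqref{sigmainf}, and then to combine the convergence \eqref{UkConvergenceNonlinear} with the absolute continuity of the least-squares functional and the fact that marked elements must eventually become small. For each $T\in\mathcal{M}_\ell$ the triangle inequality applied to the two $L^2$-contributions in \eqref{ap_nl_indicator} gives
$$
\eta^{nls}_\ell(T;\bsigma_\ell^{nr},u_\ell^{nl})\le \eta^{nls}_\ell(T;\bsigma_\infty^{nr},u_\infty^{nl})+\delta_\ell(T),
$$
where $\delta_\ell(T)^2:=\|(\bsigma_\ell^{nr}-\bsigma_\infty^{nr})+\nabla(u_\ell^{nl}-u_\infty^{nl})\|_{0,T}^2+\|\gradt(\bsigma_\ell^{nr}-\bsigma_\infty^{nr})+(u_\ell^{nl}-u_\infty^{nl})+((u_\ell^{nl})^3-(u_\infty^{nl})^3)\|_{0,T}^2$ collects the mismatch of the arguments, the data term $f$ cancelling in the difference.

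First I would control the perturbation $\delta_\ell(T)$ by a global quantity. Since $\|\cdot\|_{0,T}\le\|\cdot\|_{0,\O}$ for every $T\in\cT_\ell$, and since the cubic term obeys the Lipschitz bound $\|(u_\ell^{nl})^3-(u_\infty^{nl})^3\|_0\le c_f\|u_\ell^{nl}-u_\infty^{nl}\|_1$ (proved exactly as \eqref{NonlinearL2Error}, using the stability bounds \eqref{nonlinear_stability} and \eqref{nonlinear_stability_inf}), one obtains
$$
\max_{T\in\mathcal{M}_\ell}\delta_\ell(T)\le c_f\big(\|\bsigma_\ell^{nr}-\bsigma_\infty^{nr}\|_{H(\divvr)}+\|u_\ell^{nl}-u_\infty^{nl}\|_1\big)\longrightarrow 0
$$
by \eqref{UkConvergenceNonlinear}. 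This disposes of the $\delta_\ell$ contribution uniformly over the marked elements.

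It then remains to show $\max_{T\in\mathcal{M}_\ell}\eta^{nls}_\ell(T;\bsigma_\infty^{nr},u_\infty^{nl})\to0$ for the fixed pair $(\bsigma_\infty^{nr},u_\infty^{nl})$. Here $\eta^{nls}_\ell(T;\bsigma_\infty^{nr},u_\infty^{nl})^2=\int_T g\,dx$ with $g:=|\bsigma_\infty^{nr}+\nabla u_\infty^{nl}|^2+|\gradt\bsigma_\infty^{nr}+u_\infty^{nl}+(u_\infty^{nl})^3-f|^2$, a fixed $L^1(\O)$-function independent of $\ell$ (it is a finite sum of squares of $L^2(\O)$-functions, the cubic term being controlled by the Sobolev embedding). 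By the absolute continuity of the Lebesgue integral (cf.\ Proposition \ref{A3A4}), for every $\varepsilon>0$ there is $\delta>0$ such that $\int_E g\,dx<\varepsilon$ whenever $|E|<\delta$. Hence it suffices to prove the geometric fact $\max_{T\in\mathcal{M}_\ell}|T|\to0$ as $\ell\to\infty$.

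The main obstacle is precisely this geometric fact, since the least-squares indicator carries no mesh-size weight and so cannot be made small through an $h_T$ factor; smallness must come from the shrinking \emph{support} of the marked elements. I would establish it from \textbf{Assumptions R1} and \textbf{R3}: a marked element is refined (R3) and therefore disappears from every later mesh, while R1 forces element volumes to decrease geometrically under refinement, so that only finitely many elements of volume at least $c$ can ever be generated from the initial mesh $\cT_0$. Since each such element can be marked at most once (after marking it is bisected and never recurs in the nested meshes), for any $c>0$ at most finitely many marking steps can involve an element of volume $\ge c$; thus $\max_{T\in\mathcal{M}_\ell}|T|<c$ for all sufficiently large $\ell$. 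This mesh reduction on marked elements is the property underlying the abstract framework of \cite{Sie:11,FP:20}. Combining the perturbation estimate, the absolute continuity, and this geometric fact yields \eqref{EtaMarkConvergenceNonlinear}.
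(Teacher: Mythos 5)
Your proposal is correct and follows essentially the same route as the paper's proof: split $\eta^{nls}_\ell(T;\bsigma_\ell^{nr},u_\ell^{nl})$ by the triangle inequality into the indicator of the fixed limiting pair $(\bsigma_\infty^{nr},u_\infty^{nl})$ plus a mismatch term, make the mismatch vanish uniformly via \eqref{UkConvergenceNonlinear} together with the Lipschitz bound on the cubic term, and make the fixed-pair contribution vanish because it is the integral of a fixed $L^1(\O)$ function over marked elements whose measure tends to zero. The only real difference is the final geometric step: the paper simply invokes Lemma 3.6 of \cite{Sie:11} for the fact that the size of the maximal marked element tends to zero, whereas you prove this directly by a counting argument (only finitely many admissible elements of volume at least $c$ can ever be created under {\bf R1}, and by {\bf R3} and the nestedness of newest-vertex-bisection refinements each such element can be marked at most once, never to reappear); this self-contained argument is valid and a reasonable substitute for the citation. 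As a side benefit, your un-squared triangle inequality in the product $L^2$-norm is stated exactly, avoiding the missing factor of $2$ in the paper's element-wise squared estimate, which is harmless there but formally imprecise.
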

\begin{proof}
By the triangle inequality, we have the following property,
\begin{eqnarray*}
\eta^{nls}_\ell(T;\bsigma_\ell^{nr},u_\ell^{nl})^2 &=& \|\nabla u_\ell^{nl} + \bsigma_\ell^{nr} \|_{0,T}^2  + \|\nabla\cdot \bsigma_\ell^{nr} +(u_\ell^{nl})^3 +u_\ell^{nl} - f\|_{0,T}^2  \\
&\leq &
\|\bsigma_\infty^{nr} + \nabla u_\infty^{nl} \|_{0,T}^2 + \|\nabla\cdot \bsigma_\infty^{nr} +(u_\infty^{nl})^3 +u_\infty^{nl} - f\|_{0,T}^2 
 +\|\nabla u_\ell^{nl} - \nabla u_\infty^{nl} \|_{0,T}^2 \\ 
 && +  \|\bsigma_\ell^{nr}  - \bsigma_\infty^{nr} \|_{0,T}^2   
 + \|\nabla\cdot \bsigma_\ell^{nr} - \gradt\bsigma_\infty^{nr} \|_{0,T}^2 + \|(u_\ell^{nl})^3 +u_\ell^{nl} - (u_\infty^{nl})^3 -u_\infty^{nl} \|_{0,T}^2. 
\end{eqnarray*}
When $\ell \rightarrow \infty$, the terms $\|\nabla u_\ell^{nl} - \nabla u_\infty^{nl} \|_{0,T}$, $\|\bsigma_\ell^{nr}  - \bsigma_\infty^{nr} \|_{0,T}$, and $\|\nabla\cdot \bsigma_\ell^{nr} - \gradt\bsigma_\infty^{nr} \|_{0,T}$ converges to zero due to \eqref{UkConvergenceNonlinear}. The term $\|(u_\ell^{nl})^3 +u_\ell^{nl} - (u_\infty^{nl})^3 -u_\infty^{nl} \|_{0,T} \leq c_f\|u_\ell^{nl}-u_\infty^{nl}\|_{1,T}$ also tends to zero.

For the element $T_\ell =  \arg\max_{T\in \cM_\ell} \eta^{nls}_\blt(T;\bsigma_\ell^{nr},u_\ell^{nl})$, its mesh size tends to zero as argued in the proof of Lemma 3.6 of \cite{Sie:11}, combined with  \eqref{nonlinear_stability_inf}, we have $\|\bsigma_\infty^{nr} + \nabla u_\infty^{nl} \|_{0,T_\ell}^2 + \|\nabla\cdot \bsigma_\infty^{nr} +(u_\infty^{nl})^3 +u_\infty^{nl} - f\|_{0,T_\ell}^2 \rightarrow 0$ as $\ell\rightarrow 0$. Thus we have the lemma.
\end{proof}

\begin{lem}
    \label{lem_weakconv}
    Suppose that the marking strategy and the mesh-refinement in Algorithm 2 satisfy {\bf Assumptions (M) and (R1), (R2), (R3)}, then we have the following weak convergence of the residual,
\begin{eqnarray}
\label{res_conv}
\lim_{\ell\rightarrow \infty} \langle\cR^{nl}(\bsigma_\ell^{nr},u_\ell^{nl}), (\btau,v) \rangle  &=& 0 \quad \forall (\btau,v) \in H^2(\O)^d \times H^2(\O)\cap H^1_0(\O) .
\end{eqnarray}
\end{lem}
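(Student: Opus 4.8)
The plan is to combine the Galerkin orthogonality satisfied by the two-step discrete solutions with the residual upper bound \eqref{NonlinearResidualBound}, and then to separate two distinct smallness mechanisms via a mesh-size threshold. First I would record the \emph{Galerkin orthogonality}: for any $(\btau_\ell,v_\ell)\in\bbX_\ell=\bbW_\ell\times\bbV_\ell$, the $v_\ell$-contribution to $\langle\cR^{nl}(\bsigma_\ell^{nr},u_\ell^{nl}),(\btau_\ell,v_\ell)\rangle$ vanishes by the discrete equation \eqref{NonlinearDis}, while the $\btau_\ell$-contribution vanishes by the discrete recovery equation \eqref{nl_flux_recovery_discrete} (a direct rearrangement of $b_{hdiv}$); hence $\langle\cR^{nl}(\bsigma_\ell^{nr},u_\ell^{nl}),(\btau_\ell,v_\ell)\rangle=0$ for all $(\btau_\ell,v_\ell)\in\bbX_\ell$. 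Consequently, for $(\btau,v)\in H^2(\O)^d\times(H^2(\O)\cap H^1_0(\O))$ and the interpolant $I_\ell(\btau,v)\in\bbX_\ell$ of \eqref{localapp}, I can subtract $I_\ell(\btau,v)$ at no cost and invoke \eqref{NonlinearResidualBound}:
\[
\langle\cR^{nl}(\bsigma_\ell^{nr},u_\ell^{nl}),(\btau,v)\rangle=\langle\cR^{nl}(\bsigma_\ell^{nr},u_\ell^{nl}),(\btau,v)-I_\ell(\btau,v)\rangle\le\sum_{T\in\cT_\ell}\eta^{nls}_\ell(T;\bsigma_\ell^{nr},u_\ell^{nl})\,\tri(\btau,v)-I_\ell(\btau,v)\tri_T.
\]

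Next I would establish the $\ell^\infty$-decay of the indicators, $\varepsilon_\ell:=\max_{T\in\cT_\ell}\eta^{nls}_\ell(T;\bsigma_\ell^{nr},u_\ell^{nl})\to0$. This is where \eqref{EtaMarkConvergenceNonlinear} enters: the indicators decay on marked elements, and Assumption~(M) propagates this to the unmarked ones, $\max_{T\in\cT_\ell\setminus\cM_\ell}\eta^{nls}_\ell(T)\le g\big(\max_{T\in\cM_\ell}\eta^{nls}_\ell(T)\big)\to g(0)=0$ by continuity of $g$ at $0$ (for the standard strategies $g(s)=s$, so $g(0)=0$). Note this is strictly weaker than the $\ell^2$-estimator convergence asserted in the final theorem, and it is proved without knowing $u^{nl}_\infty=u^{nl}$, so there is no circularity.

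I would then split the element sum at a fixed threshold $\delta>0$. On the fine elements ($h_T\le\delta$), Cauchy--Schwarz, the uniform bound \eqref{bdd_eta_nl}, and the local estimate \eqref{localapp} give
\[
\sum_{h_T\le\delta}\eta^{nls}_\ell(T)\,\tri(\btau,v)-I_\ell(\btau,v)\tri_T\le\eta^{nls}_\ell(\bsigma_\ell^{nr},u_\ell^{nl})\Big(\sum_{h_T\le\delta}Ch_T^2(\|\btau\|_{H^2(T)}^2+\|v\|_{H^2(T)}^2)\Big)^{1/2}\le c_f\,C\,\delta\,(\|\btau\|_{H^2(\O)}+\|v\|_{H^2(\O)}).
\]
On the coarse elements ($h_T>\delta$) I would instead use $\eta^{nls}_\ell(T)\le\varepsilon_\ell$, the $\ell$-independent cardinality bound $\#\{T\in\cT_\ell:h_T>\delta\}\le|\O|\,\delta^{-d}$ (from $|T|=h_T^{\,d}$), and the uniform global interpolation bound $\tri(\btau,v)-I_\ell(\btau,v)\tri\le C(\|\btau\|_{H^2(\O)}+\|v\|_{H^2(\O)})$ (valid since $h_T\le h_0$), so that by Cauchy--Schwarz this part is at most $\varepsilon_\ell\,(|\O|\delta^{-d})^{1/2}\,C(\|\btau\|_{H^2(\O)}+\|v\|_{H^2(\O)})$.

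Finally, letting $\ell\to\infty$ with $\delta$ held fixed kills the coarse contribution (since $\varepsilon_\ell\to0$ and its cardinality factor does not depend on $\ell$), leaving $\limsup_{\ell\to\infty}|\langle\cR^{nl}(\bsigma_\ell^{nr},u_\ell^{nl}),(\btau,v)\rangle|\le c_f\,C\,\delta\,(\|\btau\|_{H^2(\O)}+\|v\|_{H^2(\O)})$; sending $\delta\to0$ then yields \eqref{res_conv}. I expect the coarse region to be the main obstacle: those elements need not shrink, so interpolation decay is unavailable there, and one must instead trade on the vanishing of the \emph{maximal} indicator while keeping the number of such elements uniformly bounded. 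Decoupling these two smallness sources is precisely the role of the threshold $\delta$, and it is exactly here that Assumption~(M) and the marked-element convergence \eqref{EtaMarkConvergenceNonlinear} are indispensable.
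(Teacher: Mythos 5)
Your proof is correct, and its skeleton is exactly the paper's: you establish the Galerkin orthogonality $\langle\cR^{nl}(\bsigma_\ell^{nr},u_\ell^{nl}),(\btau_\ell,v_\ell)\rangle=0$ for all $(\btau_\ell,v_\ell)\in \bbW_\ell\times\bbV_\ell$ from \eqref{NonlinearDis} and \eqref{nl_flux_recovery_discrete}, subtract the interpolant, and invoke the residual bound \eqref{NonlinearResidualBound}, precisely as in the paper's proof. The only divergence is the final step, which the paper does not carry out itself but delegates to Proposition 3.7 of \cite{Sie:11}: you finish explicitly with a mesh-size threshold $\delta$, pairing the uniform estimator bound \eqref{bdd_eta_nl} with the $O(\delta)$ interpolation decay \eqref{localapp} on fine elements, and pairing the decay of the maximal indicator (from \eqref{EtaMarkConvergenceNonlinear}, propagated to unmarked elements by Assumption (M)) with the cardinality bound $\#\{T\in\cT_\ell: h_T>\delta\}\le |\O|\,\delta^{-d}$ on coarse elements. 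This is a self-contained alternative to Siebert's argument, which instead decomposes $\cT_\ell$ by refinement history (elements never refined again, hence never marked, versus elements eventually refined) and uses the asymptotic vanishing of the mesh-size function on the latter set; your geometric split avoids that history-dependent machinery, at the mild price of needing the maximal indicator over all of $\cT_\ell$ (not merely over never-marked elements) to vanish, which Assumption (M) together with \eqref{EtaMarkConvergenceNonlinear} indeed supplies. You also rightly flag the two delicate points: invoking \eqref{EtaMarkConvergenceNonlinear} is not circular (it is proved before this lemma and independently of it), and the argument needs $g(0)=0$, which the paper's Assumption (M) states only implicitly via continuity at $0$ but clearly intends, and which is explicit in \cite{Sie:11} and \cite{FP:20}.
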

\begin{proof}
By the definition of $\cR^{nl}$, \eqref{error_non}, and \eqref{nl_flux_recovery_discrete}, we have the following indentity,
\begin{eqnarray*}
\langle\cR^{nl}(\bsigma_\ell^{nr},u_\ell^{nl}), (\btau_\ell,v_\ell) \rangle  &= & a_{nl}(u_\ell^{nl},v_\ell) - a_{nl}(u,v_\ell) +b_{hdiv}(\bsigma_\ell^{nr},\btau_\ell) +(\nabla u_\ell^{nl},\btau_\ell) + ((u_\ell^{nl})^3+u_\ell^{nl}-f,\nabla \cdot \btau_\ell) \\
&=& 0 \quad\quad\forall v_\ell\in \bbV_\ell, \btau_\ell\in \bbW_\ell,
\end{eqnarray*}
By \eqref{NonlinearResidualBound},  we have
\begin{eqnarray*}
\langle\cR^{nl}(\bsigma_\ell^{nr},u_\ell^{nl}), (\btau,v) \rangle  &=& \langle\cR^{nl}(\bsigma_\ell^{nr},u_\ell^{nl}), (\btau-\btau_\ell,v-v_\ell) \rangle  \\ 
&\leq& \sum_{T\in \cT_\ell }\eta^{nls}_\ell(T;\bsigma_\ell^{nr},u_\ell^{nl}) (\|v-I_{\ell}v\|_{1,T}+\|\btau-I_{\ell}^{rt}\btau_\ell\|_{H(\divvr;T)}).
\end{eqnarray*}
The sum of the right-hand side can be handled as in the Proposition (3.7) of \cite{Sie:11}, and we then have the result of the lemma.
\end{proof}
 
\begin{thm}
Suppose that the marking strategy and the mesh-refinement in Algorithm 2 satisfy {\bf Assumptions (M) and (R1), (R2), (R3)}, then the sequence of approximations $(u_{\ell}^{nl},\bsigma_{\ell}^{nr})$ generated by Algorithm 2 satisfies
\begin{equation}
       \lim_{\ell \rightarrow \infty } ( \| u - u_\ell^{nl}\|_1 + \| \bsigma - \bsigma_\ell^{nr}\|_{H(\divvr)} + 
        \eta^{nls}_\ell(\bsigma_\ell^{nr},u_\ell^{nl})  ) = 0.
\end{equation}
\end{thm}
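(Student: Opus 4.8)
The plan is to identify the limits of the two discrete sequences with the exact solution of the first-order system \eqref{fosys_non}, and then read off both the error and the estimator convergence from results already in hand. The starting point is the preceding lemma, which produces the limit-space solutions $u_\infty^{nl}\in\bbV_\infty$, $\bsigma_\infty^{nr}\in\bbW_\infty$ together with the strong convergences \eqref{UkConvergenceNonlinear}, namely $\|u_\infty^{nl}-u_\ell^{nl}\|_1\to 0$ and $\|\bsigma_\infty^{nr}-\bsigma_\ell^{nr}\|_{H(\divvr)}\to 0$. Everything therefore reduces to establishing the identification $(\bsigma_\infty^{nr},u_\infty^{nl})=(\bsigma^{nl},u^{nl})$.

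For this identification I would pass to the limit in the residual. Lemma \ref{lem_weakconv} gives $\langle\cR^{nl}(\bsigma_\ell^{nr},u_\ell^{nl}),(\btau,v)\rangle\to 0$ for every $(\btau,v)$ in the dense subspace $H^2(\O)^d\times(H^2(\O)\cap H^1_0(\O))$. At the same time, the map $(\brho,w)\mapsto\langle\cR^{nl}(\brho,w),(\btau,v)\rangle$ is continuous in the $H(\divvr)\times H^1_0(\O)$ topology: its linear terms are trivially continuous, $a_{nl}$ is Lipschitz on bounded sets by \eqref{Lip-non}, and the cubic contribution passes to the limit because $u_\ell^{nl}\to u_\infty^{nl}$ in $H^1_0(\O)\hookrightarrow L^6(\O)$ forces $(u_\ell^{nl})^3\to(u_\infty^{nl})^3$ in $L^2(\O)$, as in \eqref{NonlinearL2Error}. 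Combining the two limits yields $\langle\cR^{nl}(\bsigma_\infty^{nr},u_\infty^{nl}),(\btau,v)\rangle=0$ on the dense subspace, and since $\cR^{nl}(\bsigma_\infty^{nr},u_\infty^{nl})$ is a bounded functional on $\bbX$, a density argument extends this to all $(\btau,v)\in\bbX$. Taking $\btau=\bzero$ recovers the weak formulation \eqref{NonlinearCon}, so $u_\infty^{nl}=u^{nl}$ by uniqueness; taking $v=0$ then recovers \eqref{nl_flux_recovery} with this $u^{nl}$ inserted, so $\bsigma_\infty^{nr}=\bsigma^{nl}$ by uniqueness of \eqref{nl_flux_recovery}.

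Once the identification is available, the error convergence is immediate from \eqref{UkConvergenceNonlinear}, since $\|u^{nl}-u_\ell^{nl}\|_1=\|u_\infty^{nl}-u_\ell^{nl}\|_1\to 0$ and $\|\bsigma^{nl}-\bsigma_\ell^{nr}\|_{H(\divvr)}=\|\bsigma_\infty^{nr}-\bsigma_\ell^{nr}\|_{H(\divvr)}\to 0$. For the estimator I would square the local efficiency bound \eqref{nl_eff} and sum over $T\in\cT_\ell$, which gives $\eta^{nls}_\ell(\bsigma_\ell^{nr},u_\ell^{nl})\leq c_f(\|u^{nl}-u_\ell^{nl}\|_1+\|\bsigma^{nl}-\bsigma_\ell^{nr}\|_{H(\divvr)})$ and hence also tends to zero; equivalently one may use that $\eta^{nls}_\ell(\bsigma_\ell^{nr},u_\ell^{nl})=\NLS(\bsigma_\ell^{nr},u_\ell^{nl};f)^{1/2}\to\NLS(\bsigma^{nl},u^{nl};f)^{1/2}=0$ by continuity of $\NLS$ and the identification.

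The main obstacle is the identification step: passing to the limit through the cubic nonlinearity and extending the residual identity from the dense subspace to all of $\bbX$ before invoking the two uniqueness statements. All the mesh-theoretic content — the marked-element convergence \eqref{EtaMarkConvergenceNonlinear} and Assumptions (M) and (R1), (R2), (R3) — is already packaged inside Lemma \ref{lem_weakconv}, so no further work with the refinement structure is needed at this stage.
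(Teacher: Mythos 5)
Your proposal is correct and follows essentially the same route as the paper's proof: pass to the limit in the residual via Lemma \ref{lem_weakconv} together with the strong convergences \eqref{UkConvergenceNonlinear} and the Lipschitz/cubic continuity estimates, extend the vanishing of $\cR^{nl}(\bsigma_\infty^{nr},u_\infty^{nl})$ to all of $\bbX$ by density, identify the limits with $(\bsigma^{nl},u^{nl})$, and conclude the estimator convergence from the efficiency bound \eqref{nl_eff}. The only cosmetic difference is that where you invoke the previously established uniqueness of \eqref{NonlinearCon} and \eqref{nl_flux_recovery} after testing with $\btau=\bzero$ and $v=0$, the paper inlines those uniqueness arguments by testing the vanished residual with $(0,u_\infty^{nl}-u^{nl})$ and $(\bsigma^{nl}-\bsigma_\infty^{nr},0)$ and using strong monotonicity and the coercivity of $b_{hdiv}$ directly.
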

\begin{proof}
By the definition of $\cR^{nl}$, the Lipschitz-continuity of $a_{nl}$ \eqref{Lip-non}, the stabilities \eqref{nonlinear_stability} and \eqref{nonlinear_stability_inf}, and \eqref{UkConvergenceNonlinear}, for all $(\btau,v) \in  H^2(\O)^d \times H^2(\O)\cap H^1_0(\O) $, we have
\begin{eqnarray*}
\langle\cR^{nl}(\bsigma_\infty^{nr},u_\infty^{nl}), (\btau,v) \rangle &=  & a_{nl}(u_\infty^{nl},v) - a_{nl}(u_{\ell}^{nl},v) 
+b_{hdiv}(\bsigma_{\infty}^{nr} - \bsigma_\ell^{nr},\btau) + ((\nabla u_{\infty}^{nl} - \nabla u_\ell^{nl}),\btau)  \\
&& + ((u_{\infty}^{nl})^3+u_{\infty}^{nl}-(u_\ell^{nl})^3-u_\ell^{nl},\nabla\cdot \btau)
+ \langle\cR^{nl}(\bsigma_\ell^{nr},u_\ell^{nl}), (\btau,v) \rangle\\
 &\leq & C(1+\| u_{\infty}^{nl}\|_1^2+\| u_\ell^{nl}\|_1^2)\| u_{\infty}^{nl} -u_\ell^{nl}\|_1\|v\|_1 + 
 \| \bsigma_{\infty}^{nr} - \bsigma_\ell^{nr} \|_{H(\divvr)}\|\btau\|_{H(\divvr)} \\
 && + C(1+\| u_{\infty}^{nl}\|_1^2+\| u_\ell^{nl}\|_1^2)  \| u_{\infty}^{nl} - u_\ell^{nl} \|_1\|\btau\|_{H(\divvr)}+ \langle\cR^{nl}(\bsigma_\ell^{nr},u_\ell^{nl}), (\btau,v) \rangle\\
  &\leq &c_f (\| u_{\infty}^{nl} -u_\ell^{nl}\|_1+ \| \bsigma_{\infty}^{nr} - \bsigma_\ell^{nr} \|_{H(\divvr)})
  \tri (\btau,v)\tri +  \langle\cR^{nl}(\bsigma_\ell^{nr},u_\ell^{nl}), (\btau,v) \rangle \rightarrow 0.
\end{eqnarray*}
The space $H^2(\O)^d \times H^2(\O)\cap H^1_0(\O)$ is dense in $\bbX$, thus we have 
\beq
\langle\cR^{nl}(\bsigma_\infty^{nr},u_\infty^{nl}), (\btau,v) \rangle  = 0 \quad \forall (\btau,v) \in \bbX.
\eeq
By \eqref{monotonicity}, we have
\begin{equation*}
\|u^{nl} - u_{\infty}^{nl}\|_1^2 \leq a_{nl}(u_{\infty}^{nl},u_{\infty}^{nl}-u^{nl}) - a_{nl}(u^{nl},u_{\infty}^{nl}-u^{nl}) = \langle\cR^{nl}(\bsigma_\infty^{nr}, u_\infty^{nl}), (0, u_{\infty}^{nl}-u^{nl}) \rangle = 0.
\end{equation*}
Thus we have $u^{nl} = u_{\infty}^{nl}$ and   $\lim_{\ell \rightarrow \infty }\| u^{nl} - u_\ell^{nl}\|_1 =0$.
 
With $u^{nl} = u_{\infty}^{nl}$, we then have 
$\nabla u_{\infty}^{nl} =\nabla u^{nl} = -\bsigma^{nl}$ and $(u^{nl})^3+u^{nl}-f = -\gradt\bsigma^{nl}$.
Thus
\begin{eqnarray*}
0&=&\langle\cR^{nl}(\bsigma_\infty^{nr},u_\infty^{nl}), (\bsigma^{nl} - \bsigma_{\infty}^{nr}, 0) \rangle \\
&=&
b_{hdiv}(\bsigma_\infty^{nr},\bsigma^{nl} - \bsigma_{\infty}^{nr}) +(\nabla u^{nl},\bsigma^{nl} - \bsigma_{\infty}^{nr}) + ((u^{nl})^3+u^{nl}-f,\nabla \cdot (\bsigma^{nl} - \bsigma_{\infty}^{nr})) \\
&=&
b_{hdiv}(\bsigma_\infty^{nr},\bsigma^{nl} - \bsigma_{\infty}^{nr}) -(\bsigma^{nl},\bsigma^{nl} - \bsigma_{\infty}^{nr}) - (\gradt\bsigma^{nl},\nabla \cdot (\bsigma^{nl} - \bsigma_{\infty}^{nr})) \\
&=& b_{hdiv}(\bsigma^{nl} - \bsigma_{\infty}^{nr},\bsigma^{nl} - \bsigma_{\infty}^{nr}) = \|\bsigma^{nl} - \bsigma_{\infty}^{nr}\|_{H(\divvr)}^2.
\end{eqnarray*}
We have $\bsigma^{nl} = \bsigma_{\infty}^{nr}$ and   $\lim_{\ell \rightarrow \infty }\|\bsigma^{nl} - \bsigma_{\ell}^{nr}\|_{H(\divvr)} =0$.

By \eqref{nl_eff}, we have $\lim_{\ell \rightarrow \infty } \eta^{nls}_\ell(\bsigma_\ell^{nr},u_\ell^{nl}) = 0$. The proof is completed.
\end{proof}

\section{Concluding Remarks}
In this paper, we present a systematic approach for applying the least-squares functional error estimator to problems that are not solved by the LSFEM. We accomplish this by recovering a physically meaningful auxiliary variable through the minimization of the corresponding least-squares functional, where part of the solution is substituted with the available approximation. By treating the solving and recovery process as a combined two-step problem, we rigorously establish a priori and non-intrusive least-squares functional a posteriori error estimates.
Plain convergences are proved for general second-order elliptic equation and a model monotone problem with adaptive algorithms driven by the non-intrusive least-squares functional estimators.

We only consider a simple model non-linear problem in this paper.  In our future research, we intend to apply the methodology to more complicated problems such as Navier-Stokes equations. 
In \cite{CZ:10b}, numerical experiments were conducted for elliptic problems. Extensive numerical experiments will be provided in forth-coming papers for a wide range of linear and nonlinear problems.

\bibliographystyle{plain}

\end{document}